\shorttitle{Stochastic volatility models with long memory and infinite  variance}
\begin{document}

\title{Limit theorems for long memory \\ stochastic volatility models with infinite\\
  variance: Partial Sums and Sample Covariances}

\authorone[University of Ottawa]{Rafa{\l} Kulik}

\authortwo[Universit\'e Paris Ouest-Nanterre]{Philippe  Soulier}

\addressone{Corresponding author: Department of Mathematics and Statistics,
  University of Ottawa, 585 King Edward Avenue, Ottawa ON K1N 6N5, Canada,
  email: rkulik@uottawa.ca}

\addresstwo{D\'epartement de Math\'ematiques, Universit\'e Paris Ouest-Nanterre,
  200, Avenue de la R\'epublique 92000, Nanterre Cedex, France, email:
  philippe.soulier@u-paris10.fr}
\begin{abstract}
  In this paper we extend the existing literature on the asymptotic behaviour of
  the partial sums and the sample covariances of long memory stochastic
  volatility models in the case of infinite variance. We also consider models
  with leverage, for which our results are entirely new in the infinite variance
  case. Depending on the interplay between the tail behaviour and the intensity
  of dependence, two types of convergence rates and limiting distributions can
  arise. In particular, we show that the asymptotic behaviour of partial sums is
  the same for both LMSV and models with leverage, whereas there is a crucial
  difference when sample covariances are considered.
\end{abstract}

\keywords{heavy tails; long-range dependence; sample autocovariances; stochastic volatility}

\ams{60G55}{60F05; 62M10; 62P05}

\section{Introduction}
One of the standardized features of financial data is that returns
are uncorrelated, but their squares, or absolute values, are
(highly) correlated, a property referred to as long memory (which
will be later defined precisely). A second commonly accepted feature
is that log-returns are heavy tailed, in the sense that some moment
of the log-returns is infinite. The last one we want to mention is
leverage. In the financial time series context, leverage is
understood to mean negative dependence between previous returns and
future volatility (i.e. a large negative return will be followed by
a high volatility). Motivated by these empirical findings, one of
the common modeling approaches is to represent log-returns $\{Y_i\}$
as a stochastic volatility sequence $Y_i = Z_i \sigma_i$ where
$\{Z_i\}$ is an i.i.d.~sequence and $\{\sigma_i^2\}$ is the
conditional variance or more generally a certain process which
stands as a proxy for the volatility. In such a process, long memory
can only be modeled through the sequence $\{\sigma_i\}$, and the
tails can be modeled either through the sequence $\{Z_i\}$ or
through $\{\sigma_i\}$, or both. The well known GARCH processes
belong to this class of models. The volatility sequence
$\{\sigma_i\}$ is heavy tailed, unless the distribution of $Z_0$ has
finite support, and leverage can be present. But long memory in
squares cannot be modeled by GARCH process. The FIGARCH process was
introduced by \cite{baillie:bollerslev:mikkelsen:1996} to this
purpose, but it is not known if it really has a long memory
property, see e.g. \cite{douc:roueff:soulier:2008}.

To model long memory in squares, the so-called \textit{Long Memory in Stochastic
  Volatility} (LMSV) process was introduced in \cite{breidt:crato:delima:1998},
generalizing earlier short memory version of this model. In this model, the
sequences $\{Z_i\}$ and $\{\sigma_i\}$ are fully independent, and $\{\sigma_i\}$
is the exponential of a Gaussian long memory process. Tails and long memory are
easily modeled in this way, but leverage is absent. Throughout the paper, we
will refer to this process as LMSV, even though we do not rule out the short
memory case.

In order to model leverage, \cite{Nelson1991} introduced the EGARCH model (where
E stands for exponential), later extended by \cite{bollerslev:mikkelsen:1996} to
the FIEGARCH model (where FI stands for fractionally integrated) in order to
model also long memory. In these models, $\{Z_i\}$ is a Gaussian white noise,
and $\{\sigma_i\}$ is the exponential of a linear process with respect to a
function of the Gaussian sequence $\{Z_i\}$.  \cite{SurgailisViano2002} extended
the type of dependence between the sequences $\{Z_i\}$ and $\{X_i\}$ and relaxed
the Gaussian assumption for both sequences, but assumed finite moments of all
order. Thus long memory and leverage are possibly present in these models, but
heavy tails are excluded.

A quantity of other models have been introduced, e.g.  models of
Robinson and Zaffaroni \cite{RobinsonZaffaroni1997},
\cite{RobinsonZaffaroni1998} and their further extensions in
\cite{Robinson2001}; LARCH($\infty$) processes
\cite{GiraitisRobinsonSurgailis2000} and their bilinear extensions
\cite{GiraitisSurgailis2002}, and LARCH$_{+}(\infty)$
\cite{Surgailis2008}; to mention a few. All of these models have
long memory and some have leverage and allow for heavy tails.  The
theory for these models is usually extremely involved, and only the
asymptotic properties of partial sums are known in certain
cases. We will not consider these models here.
In~\cite{giraitis:leipus:robinson:surgailis:2004} the leverage
effect and long memory property of a LARCH($\infty$) model was
studied thoroughly.

The theoretical effect of long memory is that the covariance of absolute powers
of the returns $\{Y_i\}$ is slowly decaying and non summable. This induces non
standard limit theorems, such as convergence of the partial sum process to the
fractional Brownian motion or finite variance non Gaussian processes or even
L\'evy processes. In practice, long memory is often evidenced by sample
covariance plots, showing an apparent slow decay of the covariance function.
Therefore, it is of interest to investigate the asymptotic behaviour of the
sample mean or of the partial sum process, and of the sample variance and
covariances.

In the case where $\sigma_i=\sigma(X_i)$, $\{X_i\}$ is a stationary
Gaussian process with summable covariances and $\sigma(x) =
\exp(x)$, the asymptotic theory for sample mean of LMSV processes
with infinite variance is a straightforward consequence of a point
process convergence result in \cite{DavisMikosch2001}. The limit is
a L\'evy stable process. \cite{SurgailisViano2002} considered the
convergence of the partial sum process of absolute powers of
generalized EGARCH processes with finite moments of all orders and
showed convergence to the fractional Brownian motion. To the best of
our knowledge, the partial sum process of absolute powers has never
been studied in the context of heavy tails and long memory and
possible leverage, for a general function $\sigma$.

The asymptotic theory for sample covariances of weakly dependent
stationary processes with finite moments dates back to Anderson, see
\cite{anderson:1971}. The case of linear processes with regularly
varying innovations was studied in \cite{davis:resnick:1985m} and
\cite{DavisResnick1986}, for infinite variance innovation and for
innovations with finite variance but infinite fourth moment,
respectively. The limiting distribution of the sample covariances
(suitably centered and normalized) is then a stable law. These
results were obtained under conditions that rule out long memory.
For infinite variance innovation with tail index $\alpha\in(1,2)$,
these results were extended to long memory linear processes by
\cite{kokoszka:taqqu:1996}. The limiting distributions of the sample
covariances are again stable laws. However, if $\alpha \in (2,4)$,
\cite{horvath:kokoszka:2008} showed that as for partial sums, a
dichotomy appears: the limiting distribution and the rate of
convergence depend on an interplay between a memory parameter and
the tail index $\alpha$. The limit is either stable (as in the
weakly dependent or i.i.d.~case) or, if the memory is strong enough,
the limiting distribution is non Gaussian but with finite variance
(the so-called Hermite-Rosenblatt distributions).  If the fourth
moment is finite, then the dichotomy is between Gaussian or finite
variance non Gaussian distributions (again of Hermite-Rosenblatt
type); see \cite{hosking:1996}, \cite[Theorem
3.3]{horvath:kokoszka:2008} and \cite{wu:huang:zheng:2010}.

The asymptotic properties of sample autocovariances of GARCH processes have been
studied by \cite{basrak:davis:mikosch:2002r}. Stable limits arise as soon as the
marginal distribution has an infinite fourth moment. \cite{DavisMikosch2001}
studied the sample covariance of a zero mean stochastic volatility process,
under implicit conditions that rule out long memory, and also found stable
limits. \cite{mcelroy:politis:2007} (generalized by
\cite{jach:mcelroy:politis:2011}) studied partial sums and sample variance of a
possibly nonzero mean stochastic volatility process with infinite variance and
where the volatility is a Gaussian long memory process (in which case it is not
positive but this is not important for the theoretical results). They obtained a
dichotomy between stable and finite variance non Gaussian limits, and also the
surprising result that when the sample mean has a long memory type limit, then
the studentized sample mean converges in probability to zero.

The first aim of this article is to study asymptotic properties of partial sums,
sample variance and covariances of stochastic volatility processes where the
volatility is an arbitrary function of a Gaussian, possibly long memory process
$\{X_i\}$ independent of the sequence $\{Z_i\}$, which is a heavy tailed
i.i.d.~sequence. We refer to these processes as LMSV processes. The interest of
considering other functions than the exponential function is that it allows to
have other distributions than the log-normal for the volatility, while keeping
the convenience of Gaussian processes, without which dealing with long memory
processes becomes rapidly extremely involved or even intractable. The results we
obtain extend in various aspects all the previous literature in this domain.

Another important aim of the paper is to consider models with possible
leverage. To do this, we need to give precise assumptions on the nature of the
dependence between the sequences $\{Z_i\}$ and $\{X_i\}$, and since they are
related in the process $\{Y_i\}$ through the function $\sigma$, these
assumptions also involve the function $\sigma$. We have not looked for the
widest generality, but the functions $\sigma$ that we consider include the
exponential functions and all symmetric polynomials with positive coefficients.
This is not a severe restriction since the function $\sigma$ must be
nonnegative. Whereas the asymptotic theory for the partial sums is entirely
similar to the case of LMSV process without leverage, asymptotic properties of
sample autocovariances may be very different in the presence of leverage. Due to
the dependence between the two sequences, the rates of convergence and
asymptotic distribution may be entirely different when not stable.

The article is organized as follows. In Section \ref{sec:prel} we formulate
proper assumptions, as well as prove some preliminary results on the marginal
and multivariate tail behaviour of the sequence $\{Y_i\}$.  In
Section~\ref{sec:pp-conv}, we establish the limit theory for a point process
based on the rescaled sequence $\{Y_i\}$. This methodology was first used in
this context by \cite{DavisMikosch2001} and our proofs are closely related to
those in this reference. Section \ref{sec:partial-sums} applies these results to
obtain the functional asymptotic behaviour of the partial sum process of the
sequences $\{Y_i\}$ and of powers. In Section \ref{sec:sample-covariances} the
limiting behaviour of the sample covariances and autocorrelation of the process
$\{Y_i\}$ and of its powers is investigated. Proofs are given in Section
\ref{sec:proofs}. In the Appendix we recall some results on multivariate
Gaussian processes with long memory.

\subsection*{A note on the terminology}
We consider in this paper sequences $\{Y_i\}$ which can be expressed as $Y_i=Z_i
\sigma(X_i)=Z_i\sigma_i$, where $\{Z_i\}$ is an i.i.d.  sequence and $Z_i$ is
independent of $X_i$ for each $i$. Originally, SV and LMSV processes refer to
processes where the sequences $\{Z_i\}$ and $\{\sigma_i\}$ are fully
independent, $\sigma_i=\sigma(X_i)$, $\{X_i\}$ is a Gaussian process and
$\sigma(x)=\exp(x)$; see e.g.  \cite{breidt:crato:delima:1998},
\cite{BreidtDavis1998}, \cite{DavisMikosch2001}. The names EGARCH and FIEGARCH,
introduced respectively by \cite{Nelson1991} and
\cite{bollerslev:mikkelsen:1996}, refer to the case where $\sigma(x)=\exp(x)$
and where $\{X_i\}$ is a non Gaussian process which admits a linear
representation with respect to an instantaneous function of the Gaussian
i.i.d.~sequence $\{Z_i\}$, with dependence between the sequences $\{Z_i\}$ and
$\{X_i\}$.  \cite{SurgailisViano2002} still consider the case
$\sigma(x)=\exp(x)$, but relax the assumptions on $\{Z_i\}$ and $\{X_i\}$, and
retain the name EGARCH. The LMSV processes can be seen as border cases of EGARCH
type processes, where the dependence between the sequences $\{Z_i\}$ and
$\{X_i\}$ vanishes.  In this article, we consider both LMSV models, and models
with leverage which generalize the EGARCH models as defined by
\cite{SurgailisViano2002}.  In order to refer to the latter models, we have
chosen not to use the acronym EGARCH or FIEGARCH, since these models were
defined with very precise specifications and this could create some confusion,
nor to create a new one such as GEGARCH (with $G$ standing twice for
generalized, which seems a bit too much) or (IV)LMSVwL (for (possibly) Infinite
Variance Long Memory Stochastic Volatility with Leverage). Considering that the
main feature which distinguishes these two classes of models is the presence or
absence of leverage, we decided to refer to LMSV models when leverage is
excluded, and to models with leverage when we include the possibility thereof.

\section{Model description, assumptions and tail behaviour}
\label{sec:prel}

Let $\{Z_i,i\in\mathbb{Z}\}$ be an i.i.d.~sequence whose marginal
distribution has  regularly varying tails:
\begin{equation}
  \label{eq:model-2}
  \lim_{x\to+\infty} \frac{\mathbb{P}(Z_0>x)}{x^{-\alpha}L(x)} = \beta \; , \quad  \lim_{x\to+\infty}
  \frac{\mathbb{P}(Z_0<-x)}{x^{-\alpha}L(x)} = 1-\beta \; ,
\end{equation}
where $\alpha>0$, $L$ is slowly varying at infinity, and $\beta\in
[0,1]$. Condition~(\ref{eq:model-2}) is referred to as the Balanced Tail
Condition.  It is equivalent to assuming that $\mathbb{P}(|Z_0|>x) = x^{-\alpha} L(x)$
and
\begin{align*}
  \beta = \lim_{x\to+\infty} \frac{\mathbb{P}(Z_0>x)}{\mathbb{P}(|Z_0|>x)} = 1 -
  \lim_{x\to+\infty} \frac{\mathbb{P}(Z_0<-x)}{\mathbb{P}(|Z_0|>x)} \; .
\end{align*}
We will say that two random variables $Y$ and $Z$ are right-tail equivalent if
there exists $c\in(0,\infty)$ such that
\begin{align*}
  \lim_{x\to+\infty} \frac{\mathbb{P}(Y>x)}{\mathbb{P}(Z>x)} = c  \; .
\end{align*}
If one of the random variables has a regularly varying right tail,
then so has the other, with the same tail index. The converse is
false, i.e. two random variables can have the same tail index
without being tail equivalent. Two random variables $Y$ and $Z$ are
said to be left-tail equivalent if $-Y$ and $-Z$ are right-tail
equivalent, and they are said to be tail equivalent if they are both
left- and right-tail equivalent.

Under (\ref{eq:model-2}), if moreover $\mathbb{E}\left[|Z_0|^{\alpha}\right] =
\infty$, then $Z_1Z_2$ is regularly varying and (see e.g.
\cite[Equation~(3.5)]{DavisResnick1986})
\begin{align*}
  \lim_{x\to+\infty} \frac{\mathbb{P}(Z_0>x)} {\mathbb{P}(Z_0Z_1>x)} & = 0 \; , \\
  \lim_{x\to+\infty} \frac{\mathbb{P}(Z_1Z_2 > x)} {\mathbb{P}(|Z_1Z_2| > x)} & = \beta^2 +
  (1-\beta)^2 \; .
\end{align*}
For example, if (\ref{eq:model-2}) holds and the tail of $|Z_0|$ has
  Pareto-type tails, i.e.  $\mathbb{P}(|Z_0|>x) \sim cx^{-\alpha}$ as $x\to+\infty$ for
  some $c>0$, then $\mathbb{E}\left[|Z_0|^{\alpha}\right] = \infty$.  We will further
assume that $\{X_i\}$ is a stationary zero mean unit variance Gaussian process
which admits a linear representation with respect to an i.i.d.~Gaussian white
noise $\{\eta_i\}$ with zero mean and unit variance, i.e.
\begin{align}
  \label{eq:linear}
  X_i = \sum_{j=1}^{\infty} c_j \eta_{i-j}
\end{align}
with $\sum_{j=1}^\infty c_j^2=1$. We assume that the process $\{X_i\}$ either
has short memory, in the sense that its covariance function is absolutely
summable, or exhibits long memory with Hurst index $H \in (1/2,1)$, i.e. its
covariance function $\{\rho_n\}$ satisfies
\begin{equation}
  \label{eq:model-1}
  \rho_n =   \mathrm{cov}(X_0,X_n) = \sum_{j=1}^\infty c_jc_{j+n} = n^{2H-2} \ell(n) \; ,
\end{equation}
where $\ell$ is a slowly varying function.

Let $\sigma$ be a deterministic, nonnegative and continuous function
defined on $\mathbb R$.  Define $\sigma_i = \sigma(X_i)$ and the
stochastic volatility process $\{Y_i\}$ by
\begin{equation}
  \label{eq:model-3}
  Y_i = \sigma_i Z_i = \sigma(X_i) Z_i  \; .
\end{equation}
At this moment we do not assume independence of $\{\eta_i\}$ and $\{Z_i\}$.  Two
special cases which we are going to deal with are:
\begin{itemize}
\item Long Memory Stochastic Volatility (LMSV) model: where $\{\eta_i\}$ and
  $\{Z_i\}$ are independent.
\item Model with leverage: where $\{(\eta_i,Z_i)\}$ is a sequence of
  i.i.d.~random vectors. For fixed $i$, $Z_i$ and $X_i$ are independent, but
  $X_{i}$ may not be independent of the past $\{Z_j, j<i\}$.
\end{itemize}

Both cases are encompassed in the following assumption which will be in force
throughout the paper.
\begin{assumption}
  \label{hypo:iid-bivarie}
The Stochastic Volatility process $\{Y_i\}$ is defined by
\begin{align*}
  Y_i = \sigma_i Z_i \; ,
\end{align*}
where $\sigma_i = \sigma(X_i)$, $\{X_i\}$ is a Gaussian linear
process with respect to the i.i.d.~sequence $\{\eta_i\}$ of standard
Gaussian random variables such that (\ref{eq:linear}) holds,
$\sigma$ is a nonnegative function such that
$\mathbb{P}(\sigma(a\eta_0)>0)=1$ for all $a\ne0$, $\{(Z_i,\eta_i)\}$ is an
i.i.d.~sequence and $Z_0$ satisfies the Balanced Tail
Condition~(\ref{eq:model-2}) with $\mathbb{E}[|Z_0|^\alpha]=\infty$.
\end{assumption}

Let $\mathcal F_i$ be the sigma-field generated by $\eta_j,Z_j$, $j\leq i$. Then
the following properties hold.
\begin{itemize}
\item $Z_i$ is $\mathcal F_i$-measurable and independent of $\mathcal F_{i-1}$;
\item $X_{i}$ and $\sigma_i$ are $\mathcal F_{i-1}$-measurable.
\end{itemize}

We will also impose the following condition on the continuous function
$\sigma$. There exists $q>0$ such that
\begin{gather}
  \label{eq:sigma-assumption}
  \sup_{0\le\gamma\le 1} \mathbb{E}\left[\sigma^{q}(\gamma X_0)\right] < \infty \; .
\end{gather}
It is clearly fulfilled for all $q,q'$ if $\sigma$ is a polynomial
or $\sigma(x) = \exp(x)$ and $X_0$ is a standard Gaussian random
variable.  Note that if (\ref{eq:sigma-assumption}) holds for some
$q>0$, then, for $q'\leq q/2$, it holds that
\begin{align*}
  \sup_{0\le\gamma\le 1} \mathbb{E} \left[ \sigma^{q'}(\gamma X_0) \sigma^{q'}(\gamma
    X_{s}) \right] < \infty \; , \ s=1,2,\dots
\end{align*}

\subsection{Marginal tail behaviour}
If (\ref{eq:sigma-assumption}) holds, then clearly $\mathbb{E}[\sigma^{q}(X_0)] <
\infty$.  If moreover $q>\alpha$, since $X_i$ and $Z_i$ are independent for
fixed $i$, Breiman's Lemma (see e.g.  \cite[Proposition 7.5]{resnick:2007})
yields that the distribution of $Y_0$ is regularly varying and
\begin{equation}
  \label{eq:Breiman-1}
  \lim_{x\to+\infty}
  \frac{\mathbb{P}(Y_0>x)}{\mathbb{P}(Z_0>x)} =  \lim_{x\to+\infty}
  \frac{\mathbb{P}(Y_0 < -x)}{\mathbb{P}(Z_0 < -x)} = \mathbb{E}[\sigma^{\alpha}(X_0)] \; .
\end{equation}
Thus we see that there is no effect of leverage on marginal tails.
Define
\begin{align}
  \label{eq:def-an}
  a_n = \inf\{x: \mathbb{P}(|Y_0|>x) < 1/n\} \; .
\end{align}
Then the sequence $a_n$ is regularly varying at infinity with index $1/\alpha$.
Moreover, since $\sigma$ is nonnegative, $Z_0$ and $Y_0$ have the same skewness,
i.e.
\begin{align*}
  & \lim_{n\to+\infty} n \mathbb{P}(Y_0>a_n) = 1 - \lim_{n\to+\infty} n \mathbb{P}(Y_0<-a_n) =
  \beta \; .
\end{align*}

\subsection{Joint exceedances}
One of the properties of heavy tailed stochastic volatility models is that
large values do not cluster. Mathematically, for all $h>0$,
\begin{equation}
  \label{eq:Breiman-0b}
  \mathbb{P}(|Y_0| > x,|Y_h| > x) = o(\mathbb{P}(|Y_0|>x)) \; .
\end{equation}
For the LMSV model, conditioning on $\sigma_0,\sigma_h$ yields
\begin{equation}
  \label{eq:Breiman-0}
  \lim_{x\to+\infty}\frac{\mathbb{P}(|Y_0|>x,|Y_h|>x)}{\mathbb{P}^2(|Z_0|>x)}=\mathbb{E}[(\sigma_0\sigma_h)^{\alpha}] \; ,
\end{equation}
if~(\ref{eq:sigma-assumption}) holds for some $q>2\alpha$.  Property
(\ref{eq:Breiman-0b}) still holds when leverage is present.  Indeed, let $F_Z$
denote the distribution function of $Z_0$ and $\bar F_Z=1-F_Z$. Recall that
$\mathcal F_{h-1}$ is the sigma-field generated by $\eta_j,Z_j,j\le h-1$. Thus,
$Y_0$ and $X_h$ are measurable with respect to $\mathcal F_{h-1}$, and $Z_h$ is
independent of $\mathcal F_{h-1}$. Conditioning on $\mathcal F_{h-1}$ yields
\begin{align*}
  \mathbb{P}(Y_0>x, Y_h>x) = \mathbb{E}[ \bar F_Z(x/\sigma_h)
  \mathbf{1}_{\{Y_0>x\}}]\; .
\end{align*}
Next, fix some $\epsilon>0$. Applying Lemma~\ref{lem:bound-potter},
there
  exists a constant $C$ such that for all $x \ge 1$,
\begin{align*}
  \frac{\mathbb{P}\left(Y_0>x, Y_h>x\right)}{\mathbb{P}(Z_0>x)} = \mathbb{E} \left[ \frac{\bar
      F_Z(x/{\sigma_h})} {\bar F_Z(x)} \mathbf 1_{\{Y_0>x\}}\right]\le C
  \mathbb{E}\left[(1\vee \sigma_h)^{\alpha+\epsilon} \mathbf 1_{\{Y_0>x\}}\right].
\end{align*}
If (\ref{eq:sigma-assumption}) holds for some $q>\alpha$, and $\epsilon$ is
chosen small enough so that $\alpha+\epsilon<q$, then by bounded convergence,
the latter expression is finite and converges to 0 as $x\to+\infty$.

\subsection{Products}
For the LMSV model, another application of Breiman's Lemma yields that $Y_0Y_h$
is regularly varying for all $h$. If (\ref{eq:sigma-assumption}) holds for some
$q>2\alpha$, then
\begin{equation}
  \label{eq:Breiman-2}
  \lim_{x\to+\infty}
  \frac{\mathbb{P}(Y_0Y_h>x)}{\mathbb{P}(Z_0Z_1>x)}=\mathbb{E}[(\sigma_0\sigma_h)^{\alpha}] \; ,
  \quad \lim_{x\to+\infty}
  \frac{\mathbb{P}(Y_0Y_h<-x)}{\mathbb{P}(Z_0Z_1<-x)}=\mathbb{E}[(\sigma_0\sigma_h)^{\alpha}] \;.
\end{equation}
For further reference, we gather in a Lemma some properties of the
products in the LMSV case, some of which are mentioned in
\cite{DavisMikosch2001} in the case $\sigma(x)=\exp(x)$.
\begin{lem}
  \label{lem:asympt-indep-lmsv}
  Let Assumption~\ref{hypo:iid-bivarie} hold and let the sequences $\{\eta_i\}$
  and $\{Z_i\}$ be mutually independent.  Assume that
  (\ref{eq:sigma-assumption}) holds with $q>2\alpha$. Then $Y_0Y_1$ is tail
  equivalent to $Z_0Z_1$ and has regularly varying and balanced tails with index
  $\alpha$. Moreover, for all $h\geq1$, there exist real numbers $d_+(h)$,
  $d_-(h)$ such that
  \begin{align}
    \lim_{x\to \infty} \frac{\mathbb{P}(Y_0Y_h>x)}{\mathbb{P}(|Y_0Y_1|>x)} = d_+(h) \; , \ \
    \lim_{x\to \infty} \frac{\mathbb{P}(Y_0Y_h<-x)}{\mathbb{P}(|Y_0Y_1|>x)} = d_-(h) \;
    . \label{eq:defd+-}
  \end{align}
  Let $b_n$ be defined by
  \begin{align}
    \label{eq:def-bn}
    b_n = \inf\{x: \mathbb{P}(|Y_0Y_1|>x) \leq 1/n\} \; .
  \end{align}
  The sequence $\{b_n\}$ is regularly varying with index $1/\alpha$ and
  \begin{align}
    a_n = o(b_n) \; . \label{eq:domination}
  \end{align}
  For all $i\ne j>0$, it holds that
  \begin{gather}
    \lim_{n\to \infty} n\mathbb{P}(|Y_0| > a_n x \; , \ |Y_0Y_j| > b_n x) = 0 \; ,    \label{eq:indep1} \\
    \lim_{n\to \infty} n\mathbb{P}(|Y_0Y_i|> b_n x \; , \ |Y_0Y_j| > b_n x) = 0 \;    . \label{eq:indep-products}
\end{gather}

\end{lem}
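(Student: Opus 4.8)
The plan is to deduce the successive assertions from Breiman's Lemma and from the behaviour of products of i.i.d.\ regularly varying random variables with infinite $\alpha$-th moment. Tail equivalence of $Y_0Y_1$ and $Z_0Z_1$ is precisely (\ref{eq:Breiman-2}) with $h=1$: the equivalence constant $\mathbb{E}[(\sigma_0\sigma_1)^{\alpha}]$ is finite by (\ref{eq:sigma-assumption}) with $q>2\alpha$ and strictly positive since $\sigma\ge0$ and $\mathbb{P}(\sigma(a\eta_0)>0)=1$. As $Z_0Z_1$ is regularly varying with index $\alpha$ and has balanced tails (right tail ratio $\beta^2+(1-\beta)^2$, left tail ratio $2\beta(1-\beta)$, recalled in Section~\ref{sec:prel}), so has $Y_0Y_1$. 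For (\ref{eq:defd+-}) I would write $\mathbb{P}(|Y_0Y_1|>x)=\mathbb{P}(Y_0Y_1>x)+\mathbb{P}(Y_0Y_1<-x)$ and apply (\ref{eq:Breiman-2}) to numerator and denominator, obtaining $d_+(h)=(\beta^2+(1-\beta)^2)\,\mathbb{E}[(\sigma_0\sigma_h)^{\alpha}]/\mathbb{E}[(\sigma_0\sigma_1)^{\alpha}]$ and $d_-(h)=2\beta(1-\beta)\,\mathbb{E}[(\sigma_0\sigma_h)^{\alpha}]/\mathbb{E}[(\sigma_0\sigma_1)^{\alpha}]$.

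That $\{b_n\}$ is regularly varying with index $1/\alpha$ is the standard statement on generalized inverses of regularly varying tails. For (\ref{eq:domination}), I would first note that $\mathbb{P}(|Z_0|>x)=o(\mathbb{P}(|Z_0Z_1|>x))$: this follows from the displayed relation $\mathbb{P}(Z_0>x)/\mathbb{P}(Z_0Z_1>x)\to0$, its analogue for $-Z_0$, and the fact that $\mathbb{P}(Z_0Z_1>x)\sim(\beta^2+(1-\beta)^2)\,\mathbb{P}(|Z_0Z_1|>x)$ with $\beta^2+(1-\beta)^2>0$. Combined with (\ref{eq:Breiman-1}) and (\ref{eq:Breiman-2}) this gives $\mathbb{P}(|Y_0|>x)=o(\mathbb{P}(|Y_0Y_1|>x))$; since both tails are regularly varying with index $-\alpha$, with $n\mathbb{P}(|Y_0|>a_n)\to1$ and $n\mathbb{P}(|Y_0Y_1|>b_n)\to1$, evaluating the little-$o$ relation at $x=a_n$ forces $a_n/b_n\to0$.

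For (\ref{eq:indep1}) (with $j\ge1$, so that $Z_0$ and $Z_j$ are independent) and (\ref{eq:indep-products}) (with $i\ne j$, $i,j\ge1$, so that $Z_0,Z_i,Z_j$ are i.i.d.), I would condition on the $\sigma$-field $\mathcal{G}$ generated by $\{\eta_k\}$, under which $\{Z_k\}$ is i.i.d.\ and independent of $\mathcal{G}$, and then combine the conditional independence with the Potter bound of Lemma~\ref{lem:bound-potter} and (\ref{eq:sigma-assumption}). In (\ref{eq:indep1}), since $a_n=o(b_n)$ I may choose $\theta_n$ with $a_n=o(\theta_n)$ and $\theta_n=o(b_n)$ (e.g.\ $\theta_n=(a_nb_n)^{1/2}$). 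On $\{|Y_0|\le\theta_n\}$ the event forces $|Y_j|>b_nx/\theta_n\to\infty$, and the contribution of this event, multiplied by $n$, is bounded — by conditioning on $\mathcal{G}$ and Lemma~\ref{lem:bound-potter} — by $Cn\,\mathbb{P}(|Z_0|>a_nx)\,\mathbb{P}(|Z_0|>b_nx/\theta_n)$, which tends to $0$ because $n\mathbb{P}(|Z_0|>a_nx)$ is bounded while $\mathbb{P}(|Z_0|>b_nx/\theta_n)\to0$. On $\{|Y_0|>\theta_n\}$ one bounds the contribution by $Cn\,\mathbb{P}(|Z_0|>\theta_n)$, which tends to $0$ since $\theta_n/a_n\to\infty$.

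The main obstacle is (\ref{eq:indep-products}). After conditioning on $\mathcal{G}$, the event is $\{|Z_0||Z_i|>b_nx/(\sigma_0\sigma_i)\}\cap\{|Z_0||Z_j|>b_nx/(\sigma_0\sigma_j)\}$, i.e.\ two products of i.i.d.\ variables sharing the single heavy factor $|Z_0|$; a crude multi-level truncation of $|Y_0|$, $|Y_i|$, $|Y_j|$ is \emph{not} sufficient, because the interplay between $a_n$ and $b_n$ is too tight. Instead I would integrate out $Z_i$ and $Z_j$ first and then split the range of the shared factor $W=|Z_0|$ at the thresholds $b_nx/(\sigma_0\sigma_i)$ and $b_nx/(\sigma_0\sigma_j)$, evaluating the resulting truncated moments of $W$ by Karamata's theorem — including the truncated $\alpha$-th moment $\int_1^T w^{\alpha-1}L(w)\,\mathrm{d}w$, which is merely slowly varying precisely because $\mathbb{E}[|Z_0|^\alpha]=\infty$. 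This produces a bound of the form $C\,\mathbb{E}\bigl[\sigma_0^{\alpha}(\sigma_i\wedge\sigma_j)^{\alpha}\,(1+|\log(\sigma_i/\sigma_j)|)\bigr]\,\mathbb{P}(|Z_0|>b_nx)$, the $\sigma$-expectation being finite by (\ref{eq:sigma-assumption}) with $q>2\alpha$ and H\"older's inequality. Finally $n\,\mathbb{P}(|Z_0|>b_nx)\to0$, since $\mathbb{P}(|Z_0|>x)=o(\mathbb{P}(|Z_0Z_1|>x))$ while $n\mathbb{P}(|Y_0Y_1|>b_n)\to1$ and $\mathbb{P}(|Y_0Y_1|>x)\sim\mathbb{E}[(\sigma_0\sigma_1)^{\alpha}]\,\mathbb{P}(|Z_0Z_1|>x)$, which yields (\ref{eq:indep-products}). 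Keeping the slowly varying factors under control in the Karamata step — so that the final estimate is genuinely of order $\mathbb{P}(|Z_0|>b_nx)$ rather than a larger power of $b_n$ — is where the real work lies.
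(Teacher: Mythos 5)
Your treatment of the tail equivalence, of~(\ref{eq:defd+-}), of the regular variation of $\{b_n\}$ and of~(\ref{eq:domination}) matches the paper's (the paper simply declares these ``straightforward'' consequences of Breiman's Lemma and of $\mathbb{P}(Z_0>x)=o(\mathbb{P}(Z_0Z_1>x))$), and your two-level truncation for~(\ref{eq:indep1}) is a valid variant of the paper's argument, which instead conditions on $\mathcal F_{j-1}$-type information and applies dominated convergence to $n\bar F_{|Z|}\bigl(\tfrac{a_nx}{\sigma_0}\vee\tfrac{b_nx}{\sigma_0\sigma_j|Z_j|}\bigr)$, using Lemma~\ref{lem:bound-potter} for the domination. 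The real divergence is at~(\ref{eq:indep-products}). You assert that a crude pointwise bound cannot work and set up a Karamata-type splitting of the shared factor $|Z_0|$ at the two thresholds; the paper's proof shows that the simple route does work: from $\sigma_i|Z_i|\wedge\sigma_j|Z_j|\leq(\sigma_i\vee\sigma_j)(|Z_i|\wedge|Z_j|)$ one gets
\begin{align*}
\mathbb{P}(|Y_0Y_i|>x,|Y_0Y_j|>x)\leq \mathbb{P}\bigl(|Z_0|\,\sigma_0(\sigma_i\vee\sigma_j)(|Z_i|\wedge|Z_j|)>x\bigr)\leq C\,\mathbb{P}(|Z_0|>x)\,\mathbb{E}[\sigma_0^{\alpha+\epsilon}(\sigma_i\vee\sigma_j)^{\alpha+\epsilon}]\,\mathbb{E}[(|Z_i|\wedge|Z_j|)^{\alpha+\epsilon}],
\end{align*}
by Lemma~\ref{lem:bound-potter}, because $|Z_i|\wedge|Z_j|$ has tail index $2\alpha$ and hence a finite moment of order $\alpha+\epsilon$; the conclusion then follows from $\mathbb{P}(|Z_0|>x)=o(\mathbb{P}(|Z_1Z_2|>x))$ exactly as in your last step. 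So the one heavy factor is $|Z_0|$ alone, and no splitting of its range is needed.

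Your alternative route is in principle viable (the three Karamata regions do each produce a term of order $\mathbb{P}(|Z_0|>b_nx)$ up to $\sigma$-dependent constants), but you have not actually carried out the part you yourself identify as ``where the real work lies'': controlling the intermediate region, where the truncated $\alpha$-th moment $\int_a^b w^{-1}L(w)\,\mathrm dw$ appears and must be compared with $L(b_n)$ uniformly in the (random) thresholds, and verifying that $\mathbb{E}\bigl[\sigma_0^{\alpha}(\sigma_i\wedge\sigma_j)^{\alpha}(1+|\log(\sigma_i/\sigma_j)|)\bigr]<\infty$, which does not follow directly from~(\ref{eq:sigma-assumption}) since no moment condition on $\log\sigma$ is assumed. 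As written, that step is a sketch rather than a proof; the paper's inequality above closes the gap and makes the whole issue moot.
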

The quantities $d_+(h)$ and $d_-(h)$ can be easily computed in the LMSV case.
\begin{align*}
  d_+(h) & = \{\beta^2+(1-\beta)^2\} \frac
  {\mathbb{E}[\sigma^\alpha(X_0)\sigma^\alpha(X_h)]}
  {\mathbb{E}[\sigma^\alpha(X_0)\sigma^\alpha(X_1)]} \; ,  \ \
  d_-(h)  = 2\beta(1-\beta) \frac {\mathbb{E}[\sigma^\alpha(X_0)\sigma^\alpha(X_h)]}
  {\mathbb{E}[\sigma^\alpha(X_0)\sigma^\alpha(X_1)]} \; .
\end{align*}

When leverage is present, many different situations can occur,
obviously depending on the type of dependence between $Z_0$ and
$\eta_0$, and also on the function~$\sigma$. We consider the
exponential function $\sigma(x)=\exp(x)$, and a class of subadditive
functions. In each case we give an assumption on the type of
dependence between $Z_0$ and $\eta_0$ that will allow to prove our
results. Examples are given after the Lemmas.

\begin{lem}
  \label{lemma:asymp-indep-EGARCH-expo}
  Assume that $\sigma(x) = \exp(x)$ and $\exp(k\eta_0) Z_0$ is tail
  equivalent to $Z_0$ for all $k \in\mathbb R$. Then~all the conclusions of
  Lemma~\ref{lem:asympt-indep-lmsv} hold.
\end{lem}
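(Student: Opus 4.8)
The plan is to exhibit, for each $h\ge1$, a multiplicative decomposition of $Y_0Y_h$ having exactly the structure of the LMSV case, with the heavy factor $Z_0$ replaced by a variable that is merely tail equivalent to it, and then to rerun the computations behind Lemma~\ref{lem:asympt-indep-lmsv}. Since $X_i=\sum_{j\ge1}c_j\eta_{i-j}$ by \eqref{eq:linear}, the innovation $\eta_0$ enters $X_0+X_h$ only through $X_h$, with coefficient $c_h$; writing $X_0+X_h=c_h\eta_0+W_h$, the variable $W_h$ is a centered Gaussian linear combination of $\{\eta_\ell:\ell\notin\{0,h\}\}$, hence is independent of the pairs $(\eta_0,Z_0)$ and $(\eta_h,Z_h)$. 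Therefore
\begin{align*}
  Y_0Y_h=\exp(W_h)\,\zeta_h\,Z_h\,,\qquad\zeta_h:=\exp(c_h\eta_0)Z_0\,,
\end{align*}
where $\exp(W_h)$, $\zeta_h$ and $Z_h$ are mutually independent, $Z_h$ has the law of $Z_0$, $\exp(W_h)$ has finite moments of all orders, and --- by the hypothesis of the lemma --- $\zeta_h$ is tail equivalent to $Z_0$, hence regularly varying with index $\alpha$, balanced, with $\mathbb{E}[|\zeta_h|^\alpha]=\infty$; likewise $\psi_h:=|\zeta_h|=\exp(c_h\eta_0)|Z_0|$ is tail equivalent to $|Z_0|$. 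This is the structure of $Y_0Y_h=(\sigma_0\sigma_h)Z_0Z_h$ in the LMSV case, with $\sigma_0\sigma_h$ replaced by the all-moments factor $\exp(W_h)$ and $Z_0$ by $\zeta_h$.

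For the tail statements of Lemma~\ref{lem:asympt-indep-lmsv} this substitution suffices. The product $\zeta_hZ_h$ of two independent, balanced, index-$\alpha$ regularly varying variables with infinite $\alpha$-th moment is again regularly varying with index $\alpha$, with tails equal, up to positive constants, to those of $Z_0Z_1$ (the computation behind \eqref{eq:Breiman-2}, with $Z_0$ replaced by $\zeta_h$); Breiman's Lemma with $\exp(W_h)$ then gives $\mathbb{P}(Y_0Y_h>x)\sim\mathbb{E}[\exp(\alpha W_h)]\,\mathbb{P}(\zeta_hZ_h>x)$, and similarly for the left tail. For $h=1$ this makes $Y_0Y_1$ tail equivalent to $Z_0Z_1$, regularly varying with index $\alpha$ and balanced; the limits $d_\pm(h)$ in \eqref{eq:defd+-} exist as ratios of regularly varying tails of common index $\alpha$, and $b_n$ in \eqref{eq:def-bn} is regularly varying with index $1/\alpha$. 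Finally $\mathbb{P}(|Y_0|>x)\sim\mathbb{E}[\sigma^\alpha(X_0)]\mathbb{P}(|Z_0|>x)=o(\mathbb{P}(|Z_0Z_1|>x))=o(\mathbb{P}(|Y_0Y_1|>x))$ by \eqref{eq:Breiman-1}, so $a_n=o(b_n)$, i.e. \eqref{eq:domination}; in particular $n\,\mathbb{P}(|Z_0|>b_nx)\to0$ and $n\,\mathbb{P}(|Y_0|>\delta b_n)\to0$ for every $\delta>0$.

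There remain the negligibility statements \eqref{eq:indep1} and \eqref{eq:indep-products}, which I would prove by conditioning on the past as in the LMSV case. For \eqref{eq:indep1}, write $|Y_0Y_j|=|Y_0|\exp(X_j)|Z_j|$; as $Y_0$ and $X_j$ are $\mathcal F_{j-1}$-measurable while $Z_j$ is independent of $\mathcal F_{j-1}$ with the law of $Z_0$, conditioning on $\mathcal F_{j-1}$ gives
\begin{align*}
  n\,\mathbb{P}(|Y_0|>a_nx,\ |Y_0Y_j|>b_nx)=n\,\mathbb{E}\bigl[\mathbf 1_{\{|Y_0|>a_nx\}}\,\mathbb{P}(|Z_0|>b_nx/R)\bigr]\,,\qquad R:=|Y_0|\exp(X_j)\,.
\end{align*}
A Potter bound (Lemma~\ref{lem:bound-potter}) gives, for small $\epsilon>0$, $\mathbb{P}(|Z_0|>b_nx/R)\le C(R/(b_nx))^{\alpha-\epsilon}$ once $b_nx/R$ exceeds a fixed threshold, and $\le1$ otherwise. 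Since $R=\exp(X_0+X_j)|Z_0|=\exp(W_j)\psi_j$ with $\exp(W_j)$ independent of $\psi_j$ and $\psi_j$ tail equivalent to $|Z_0|$, $R$ is regularly varying with index $\alpha$, so $n\,\mathbb{P}(R>c\,b_nx)\to0$ (the relevant constant $c>0$) because $n\,\mathbb{P}(|Z_0|>b_n)\to0$; this disposes of the second term. For the first term, $\mathbb{E}[R^{\alpha-\epsilon}\mathbf 1_{\{|Y_0|>a_nx\}}]$ is finite --- indeed $\mathbb{E}[R^{\alpha-\epsilon}]=\mathbb{E}[\exp((\alpha-\epsilon)W_j)]\,\mathbb{E}[\psi_j^{\alpha-\epsilon}]<\infty$, the last factor being finite exactly because $\psi_h$ is tail equivalent to $|Z_0|$ --- and by dominated convergence it tends to $0$ at a rate $a_n^{-\epsilon}\times(\text{slowly varying})$, which overcomes $n\,(b_nx)^{-\alpha+\epsilon}$: here one uses that $b_n$ and $a_n$, though both regularly varying of index $1/\alpha$, are separated by a slowly varying factor of logarithmic order arising from the product structure of $|Y_0Y_1|$. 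Statement \eqref{eq:indep-products} is handled the same way, conditioning on $\mathcal F_{(i\vee j)-1}$ and grouping each factor $\exp(k\eta_\ell)$ coming from $X_i$ or $X_j$ with the heavy variable $Z_\ell$ it multiplies; the joint exceedance is then dominated by the event that $|Y_0|$ alone is of order $b_n$, which is negligible after multiplication by $n$.

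The main obstacle is that, unlike in the LMSV model, conditioning on the volatility no longer decouples the heavy-tailed noise: $\eta_0$ is entangled with $Z_0$, so one cannot condition on $\eta_0$ without losing control of the tail of $Z_0$. The remedy, used throughout, is to keep each factor $\exp(k\eta_\ell)$ attached to the heavy variable $Z_\ell$ it sits with and to invoke the tail equivalence of $\exp(k\eta_\ell)Z_\ell$ to $Z_\ell$ --- which is precisely why the hypothesis is assumed for every $k\in\mathbb{R}$, so as to cover all exponents generated by the filter coefficients $\{c_h\}$.
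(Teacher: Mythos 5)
Your multiplicative decomposition $Y_0Y_h=\exp(W_h)\,\zeta_h\,Z_h$ with $\zeta_h=\exp(c_h\eta_0)Z_0$ is exactly the paper's own (your $W_h$ is its $X_0+\hat X_h$, your $\zeta_h$ its $\tilde Z_0^{(h)}$), and your route to the tail equivalence of $Y_0Y_h$ with $Z_0Z_1$, to (\ref{eq:defd+-}) and to (\ref{eq:domination}) --- the product result of Embrechts--Goldie for $\zeta_hZ_h$ followed by Breiman's Lemma for the all-moments factor $\exp(W_h)$ --- is the same as the paper's. That part is correct.

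The gap is in the asymptotic-independence statements (\ref{eq:indep1}) and (\ref{eq:indep-products}), which are the substantive content of the lemma and occupy most of the paper's proof. For (\ref{eq:indep-products}) you offer one sentence asserting that the joint exceedance is ``dominated by the event that $|Y_0|$ alone is of order $b_n$'', but the mechanism that makes this rigorous is absent. The paper's argument is: after attaching $\exp(c_i\eta_0)$ and $\exp(c_j\eta_0)$ to $Z_0$ (your ``grouping'' idea), it bounds $|Y_0Y_i|\wedge|Y_0Y_j|$ by a product $A\cdot B$, where $A$ collects $\sigma_0$, volatility factors free of $(\eta_0,\eta_i)$, and $|Z_0|\{\exp(c_i\eta_0)+\exp(c_j\eta_0)\}$, hence is tail equivalent to $|Z_0|$, while $B=|Z_i|\wedge\exp(c_{j-i}\eta_i)|Z_j|$ is \emph{independent} of $A$ and, being essentially a minimum of two independent index-$\alpha$ variables, has tail index $2\alpha$ and therefore a finite moment of order $\delta\in(\alpha,2\alpha)$. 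Breiman's Lemma then yields $\mathbb{P}(AB>x)=O(\mathbb{P}(|Z_0|>x))=o(\mathbb{P}(|Y_0Y_1|>x))$. The finite $\delta$-moment of that minimum is the whole point --- it is why a single large $Z$ cannot produce a joint exceedance --- and it does not appear in your sketch. Relatedly, your conditioning argument for (\ref{eq:indep1}) must beat the diverging prefactor $n(b_nx)^{-\alpha+\epsilon}$, so a genuine rate $\mathbb{E}[R^{\alpha-\epsilon}\mathbf 1_{\{|Y_0|>a_nx\}}]=O(a_n^{\alpha-\epsilon}/n)$ is required; dominated convergence cannot supply a rate, and the Karamata estimate that would is delicate here because $R=\exp(X_0+X_j)|Z_0|$ and the truncating variable $|Y_0|=\exp(X_0)|Z_0|$ are entangled through both $Z_0$ and $\eta_0$. (The claim that $a_n$ and $b_n$ are separated by a factor ``of logarithmic order'' is also unjustified, though all that is needed is $(a_n/b_n)^{\alpha-\epsilon}\to0$.) The paper avoids these difficulties by proving (\ref{eq:indep1}) with the same Breiman-plus-independent-light-factor device as (\ref{eq:indep-products}).
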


\begin{lem}
  \label{lemma:asymp-indep-EGARCH-quadratic}
  Assume that the function $\sigma$ is subadditive, i.e. there exists a constant
  $C>0$ such that for all $x,y\in\mathbb R$, $\sigma(x+y) \leq
  C\{\sigma(x)+\sigma(y)\}$. Assume that for any $a,b>0$,
  $\sigma(a\xi+b\eta_0)Z_0$ is tail equivalent to $Z_0$, where $\xi$ is a
  standard Gaussian random variable independent of $\eta_0$, and
  $\sigma(b\eta_0)Z_0$ is either tail equivalent to $Z_0$ or
  $\mathbb{E}[\{\sigma(b\eta_0)|Z_0|\}^q]<\infty$ for some $q>\alpha$. Then~all the
  conclusions of Lemma~\ref{lem:asympt-indep-lmsv} hold.
\end{lem}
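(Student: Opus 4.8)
The plan is to reduce all six assertions of Lemma~\ref{lem:asympt-indep-lmsv} to the single statement that, for every $h\ge1$, the product $Y_0Y_h$ is regularly varying with index $\alpha$, has balanced tails, and is tail equivalent to $Z_0Z_1$ up to an $h$-dependent positive constant. Granting this, tail equivalence of $Y_0Y_1$ and $Z_0Z_1$ is immediate; the numbers $d_\pm(h)$ exist because $\mathbb{P}(Y_0Y_h>x)$, $\mathbb{P}(Y_0Y_h<-x)$ and $\mathbb{P}(|Y_0Y_1|>x)$ then all have the same slowly varying part; $b_n$, being the generalized inverse of a tail regularly varying with index $\alpha$, is regularly varying with index $1/\alpha$; and $a_n=o(b_n)$ follows since $a_n,b_n$ are both regularly varying of index $1/\alpha$ while $\mathbb{P}(|Y_0|>x)\sim c\,\mathbb{P}(|Z_0|>x)$ by~(\ref{eq:Breiman-1}), $\mathbb{P}(|Y_0Y_1|>x)\sim c'\,\mathbb{P}(|Z_0Z_1|>x)$, and $\mathbb{P}(Z_0>x)/\mathbb{P}(Z_0Z_1>x)\to0$ together with the balanced tails force $\mathbb{P}(|Y_0|>x)/\mathbb{P}(|Y_0Y_1|>x)\to0$.

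To prove the key statement, write $X_h=c_h\eta_0+W_h$ with $W_h=\sum_{j\ge1,\,j\ne h}c_j\eta_{h-j}$, so that $W_h$ is Gaussian with variance $1-c_h^2$ and independent of $(\eta_0,Z_0)$ and of $(\eta_h,Z_h)$; this isolates the leverage, which enters only through the coupling of $\eta_0$ with $Z_0$ inside $\sigma(X_h)$. When $c_h=0$ the factor $\sigma(X_0)\sigma(X_h)$ is independent of both $(\eta_0,Z_0)$ and $(\eta_h,Z_h)$ and has, by~(\ref{eq:sigma-assumption}) taken with $q>2\alpha$, a finite moment of order exceeding $\alpha$, so Breiman's lemma applied to $Y_0Y_h=[\sigma(X_0)\sigma(X_h)]\,Z_0Z_h$ together with the product formula for regularly varying factors (cf.~\cite{DavisResnick1986}) gives the claim. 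When $c_h\ne0$ one decomposes the Gaussian part further, $X_0=\rho_h'\xi_h+X_0''$ with $\xi_h:=W_h/\sqrt{1-c_h^2}$ standard Gaussian, $X_0''\perp\xi_h$ (so $|\rho_h'|\le1$ since $X_0$ has unit variance), and $\xi_h,X_0'',(\eta_0,Z_0),(\eta_h,Z_h)$ mutually independent. Subadditivity gives $\sigma(X_0)\le C\{\sigma(\rho_h'\xi_h)+\sigma(X_0'')\}$ and $\sigma(X_h)\le C\{\sigma(c_h\eta_0)+\sigma(\sqrt{1-c_h^2}\,\xi_h)\}$; expanding $\sigma(X_0)\sigma(X_h)Z_0Z_h$ produces four terms, in each of which the Gaussian factor is a continuous function of $X_0''$ and/or $\xi_h$ (variances at most $1$), hence has a finite moment of order exceeding $\alpha$ by~(\ref{eq:sigma-assumption}) and is independent of $(\eta_0,Z_0)$; the two terms containing $\sigma(c_h\eta_0)$ are handled via the hypothesis on $\sigma(b\eta_0)Z_0$ — tail equivalence to $Z_0$ produces a term again tail equivalent to $Z_0Z_1$, while $\mathbb{E}[\{\sigma(b\eta_0)|Z_0|\}^q]<\infty$ produces a term of order $o(\mathbb{P}(|Z_0Z_1|>x))$. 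Combining Breiman's lemma with the product formula then gives $\limsup_x\mathbb{P}(|Y_0Y_h|>x)/\mathbb{P}(|Z_0Z_1|>x)<\infty$, and since $\sigma(X_0)\sigma(X_h)>0$ a.s.\ under Assumption~\ref{hypo:iid-bivarie}, the sign of $Y_0Y_h$ equals that of $Z_0Z_h$, forcing the tail balance of $Y_0Y_h$, once regular variation is known, to be $\beta^2+(1-\beta)^2$ on the right and $2\beta(1-\beta)$ on the left, exactly as for $Z_0Z_1$.

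The exact tail equivalence, and hence the existence (not merely boundedness) of all the limits, is obtained by conditioning on the Gaussian block: $\mathbb{P}(Y_0Y_h>x)=\mathbb{E}[g(X_0,W_h,x)]$ with $g(x_0,w,x)=\mathbb{P}(\sigma(x_0)\sigma(c_h\eta_0+w)Z_0\cdot Z_h>x)$, the pair $(X_0,W_h)$ being independent of $(\eta_0,Z_0)$ and of $Z_h$; one identifies the pointwise asymptotics of $g(x_0,w,\cdot)$ and passes to the limit under the expectation using a Potter-bound majorant (Lemma~\ref{lem:bound-potter}) controlled by the moments from~(\ref{eq:sigma-assumption}). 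Here the hypothesis that $\sigma(\sqrt{1-c_h^2}\,\xi+c_h\eta_0)Z_0$ is tail equivalent to $Z_0$ controls the leverage factor: after using subadditivity to absorb the deterministic shift produced by the conditioning into a lower-order error, one is left with an object of the form $\sigma(a\xi+b\eta_0)Z_0$, which the hypothesis makes regularly varying of index $\alpha$ with balance $\beta$, and the product formula upgrades this to a tail equivalent of $Z_0Z_1$ with balance $\beta^2+(1-\beta)^2$. (When $c_h<0$ the reduction to $b>0$ uses evenness of $\sigma$, which holds for the symmetric polynomials one has in mind, or one simply reads the hypothesis as holding for all $b\ne0$.) Finally, the joint-exceedance statements~(\ref{eq:indep1}) and~(\ref{eq:indep-products}) are proved exactly as the non-clustering bound~(\ref{eq:Breiman-0b}) of Section~\ref{sec:prel}: conditioning on $\mathcal F_{j-1}$, peeling off the last factor $Z_j$ via $\bar F_Z$, and using Lemma~\ref{lem:bound-potter} together with~(\ref{eq:sigma-assumption}); since $b_n\gg a_n$ the events there force two distinct components $Y_\cdot$ to be simultaneously large at comparable scales, and the resulting probabilities are $o(1/n)$.

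The main obstacle is the step just described: disentangling $\sigma(X_0)$ from $\sigma(X_h)$ — which share the Gaussian noise $W_h$ — while simultaneously separating $\sigma(X_h)$ from $Z_0$ — which share the noise $\eta_0$ — so that the hypothesis, stated for $\sigma(a\xi+b\eta_0)Z_0$ with $\xi$ a \emph{random} Gaussian independent of $\eta_0$, can actually be brought to bear. Subadditivity gives only one-sided bounds and, upon conditioning, produces deterministic shifts; the delicate points are to show that these shifts contribute only lower-order error and to verify uniform integrability of the conditional tails via Potter bounds. Everything else is a routine combination of Breiman's lemma and the product formula for regularly varying factors.
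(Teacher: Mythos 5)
Your proposal inverts the emphasis of the paper's proof: you spend most of your effort on the regular variation and tail equivalence of $Y_0Y_h$ with $Z_0Z_1$, which the paper dismisses in one line as a straightforward consequence of the hypotheses, and you dispatch in one sentence the joint-exceedance properties~(\ref{eq:indep1}) and~(\ref{eq:indep-products}), which are the entire content of the paper's written proof. The problem is that your one sentence does not work. For~(\ref{eq:indep-products}) you propose to condition on $\mathcal F_{j-1}$, peel off $Z_j$ via $\bar F_Z$, and dominate with Lemma~\ref{lem:bound-potter}; but the Potter majorant this produces is of the form $n\bar F_{|Z|}(b_nx)\,\mathbb{E}\bigl[(1\vee|Y_0|\sigma_j)^{\alpha+\epsilon}\mathbf 1_{\{|Y_0Y_i|>b_nx\}}\bigr]$, and that expectation is infinite: $|Y_0|=\sigma_0|Z_0|$ has tail index $\alpha$ and $\mathbb{E}[|Z_0|^{\alpha}]=\infty$ under Assumption~\ref{hypo:iid-bivarie}, and the indicator does not truncate $|Y_0|$ from above, so you are left with a $0\cdot\infty$ product. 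The paper's argument is structurally different, and this is where subadditivity is actually used: writing $X_i=\hat X_i+c_i\eta_0$ and $X_j=\check X_{i,j}+c_j\eta_0+c_{j-i}\eta_i$, one expands $\sigma(X_i)\sigma(X_j)$ by subadditivity into six products and bounds $\mathbb{P}(Y_0Y_i>x,Y_0Y_j>x)$ by six probabilities of the form $\mathbb{P}(\tilde ZU>x)$, where $\tilde Z$ (essentially $|Z_0|$ times the $\sigma(c\,\eta_0)$ factors) is tail equivalent to $Z_0$ by hypothesis or negligible by the moment alternative, and $U$ is independent of $\tilde Z$ and contains the factor $|Z_i|\wedge|Z_j|$, whose tail index is $2\alpha$ and which therefore has a finite moment of order $q\in(\alpha,2\alpha)$. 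Breiman's lemma then gives each term as $O(\mathbb{P}(|Z_0|>x))=o(\mathbb{P}(|Y_0Y_1|>x))$. The minimum $|Z_i|\wedge|Z_j|$ is the indispensable device --- it is what converts ``two simultaneous large values'' into an integrable factor --- and it is absent from your argument.

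On the tail-equivalence part, your decomposition $X_h=c_h\eta_0+W_h$ and your diagnosis of the entanglement between $\sigma(X_0)$ and $\sigma(X_h)$ through the shared Gaussian noise is a fair account of why this case is harder than the exponential one, where multiplicativity of $\exp$ lets one factor out $Z_0\exp(c_h\eta_0)$ exactly. Your honesty that the deterministic shifts created by conditioning are the delicate point is also warranted: subadditivity gives only one-sided bounds, so ``absorbing the shift into a lower-order error'' is an assertion, not a proof, for a general subadditive $\sigma$. Since the paper itself omits this step entirely, I will not count that incompleteness against you; the disqualifying gap is the treatment of~(\ref{eq:indep-products}).
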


\begin{example}
  \label{ex:decreasing}

  Assume that $Z_0 = |\eta_0|^{-1/\alpha}U_0$
  with $\alpha>0$, where $U_0$ is independent of $\eta_0$ and $\mathbb{E}[|U_0|^q]<\infty$
  for some $q>\alpha$. Then $Z_0$ is regularly varying with index~$-\alpha$.
  \begin{itemize}
  \item Case $\sigma(x)=\exp(x)$. For each $c>0$, $Z_0\exp(c\eta_0)$
    is tail equivalent to $Z_0$. See Lemma~\ref{lem:tail-equivalence} for a
    proof of this fact.
  \item Case $\sigma(x)=x^2$. Let $q' \in (\alpha,q \wedge
    \{\alpha/(1-2\alpha)_+\})$. Then
    \begin{align*}
      \mathbb{E}[\sigma^{q'}(b\eta_0)|Z_0|^{q'}] = b^{2q'} \mathbb{E}[|\eta_0|^{q'(2-1/\alpha)} |U_0|^{q'}] < \infty \; .
    \end{align*}
    Furthermore, let $\xi$ be a standard Gaussian random variable independent of
    $\eta_0$ and~$Z_0$. Then,
    \begin{align*}
      \sigma(a\xi+b\eta_0)Z_0 = a^2\xi^2 Z_0 + 2ab\xi
      \mathrm{sign}(\eta_0)|\eta_0|^{1-1/\alpha} U_0 + b^2 |\eta_0|^{2-1/\alpha}
      U_0 \; .
    \end{align*}
    Since $\xi$ is independent of $Z_0$ and Gaussian, by Breiman's lemma, the
    first term on the right-hand side of the previous equation is tail
    equivalent to $Z_0$. The last two terms have finite moments of order $q'$
    for some $q'>\alpha$ and do not contribute to the tail. Thus the
    assumptions of Lemma~\ref{lemma:asymp-indep-EGARCH-quadratic} are satisfied.
  \end{itemize}
\end{example}

\begin{example}
  \label{xmpl:indep}
 Let $Z_0'$ have regularly varying balanced tails with index $-\alpha$,
 independent of $\eta_0$. Let $\Psi_1(\cdot)$ and $\Psi_2(\cdot)$ be polynomials and
 define $Z_0= Z_0'\Psi_1(\eta_0)+\Psi_2(\eta_0)$. Then, by Breiman's Lemma,
 $Z_0$ is tail equivalent to $Z'_0$, and it is easily checked that the
 assumptions of Lemma~\ref{lemma:asymp-indep-EGARCH-expo} are satisfied and the
 assumptions of Lemma~\ref{lemma:asymp-indep-EGARCH-quadratic} are satisfied with
 $\sigma$ being any symmetric polynomial with positive coefficients. We omit the
 details.
\end{example}

\section{Point process convergence}
\label{sec:pp-conv}
For $s=0,\ldots,h$, define a Radon measure $\lambda_s$ on
$[-\infty,\infty]\setminus \{0\}$ by
\begin{align*}
  \lambda_{0}(\mathrm d x) = \alpha \left\{ \beta x^{-\alpha-1} \mathbf
    1_{(0,\infty)}(x) + (1-\beta) (-x)^{-\alpha-1} \mathbf
    1_{(-\infty,0)}(x)  \right\} \mathrm d x \; , \\
  \lambda_{s}(\mathrm d x) = \alpha \left\{ d_+(s) x^{-\alpha-1} \mathbf
    1_{(0,\infty)}(x) + d_-(s) (-x)^{-\alpha-1} \mathbf 1_{(-\infty,0)}(x)
  \right\}\mathrm d x \; ,
\end{align*}
where $d_{\pm}(s)$ are defined in (\ref{eq:defd+-}).  For $s=0,\dots,h$, define
the Radon measure $\nu_s$ on $[0,1] \times {[-\infty,\infty]\setminus\{0\}}$~by
\begin{align*}
  \nu_{s}(\mathrm d t,\mathrm d x) & = \mathrm d t \, \lambda_{s}(\mathrm d x)  \; .
\end{align*}
Set ${\bf Y}_{n,i} = (a_n^{-1}Y_i,b_n^{-1} Y_iY_{i+1}, \ldots, b_n^{-1}
Y_iY_{i+h})$, where $a_n$ and $b_n$ are defined in~(\ref{eq:def-an}) and
(\ref{eq:def-bn}) respectively, and let $N_{n}$ be the point process defined on
$[0,1] \times ([-\infty,\infty]^{h+1} \setminus \{{\bf 0}\})$ by
\begin{align*}
  N_{n} = \sum_{i=1}^n \delta_{(i/n,{\bf Y}_{n,i})} \; ,
\end{align*}
where $\delta_x$ denotes the Dirac measure at $x$.

Our first result is that for the usual univariate point process of exceedances,
there is no effect of leverage. This is a consequence of the
asymptotic independence (\ref{eq:Breiman-0b}).
\begin{prop}
  \label{prop:pp-univarie}
  Let Assumption~\ref{hypo:iid-bivarie} hold and assume that $\sigma$ is a
  continuous function such that (\ref{eq:sigma-assumption}) holds with
  $q>\alpha$. Then $\sum_{i=1}^n \delta_{(i/n,{Y}_{i}/a_n)}$ converges weakly to
  a Poisson point process with mean measure $\nu_{0}$.
\end{prop}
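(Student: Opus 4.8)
\emph{Proof strategy.} The plan is to verify the classical compensator criterion for Poisson convergence of point processes (Brown's martingale approach): it suffices to exhibit, relative to the natural filtration, a predictable compensator of $\sum_{i=1}^n\delta_{(i/n,Y_i/a_n)}$ which converges in probability to the \emph{deterministic} measure $\nu_0$ on a ring of sets that determines weak convergence of point processes, together with asymptotic negligibility of its atoms. Let $\mathcal F_i=\sigma(\eta_j,Z_j:j\le i)$. Since $\sigma_i=\sigma(X_i)$ is $\mathcal F_{i-1}$-measurable while $Z_i$ is independent of $\mathcal F_{i-1}$, for a set $R=(s,t]\times B$ with $B$ a finite union of intervals bounded away from $0$ one has
\begin{align*}
  \mathbb{E}\bigl[\delta_{(i/n,Y_i/a_n)}(R)\bigm|\mathcal F_{i-1}\bigr]=\mathbf 1\{sn<i\le tn\}\,g_n(\sigma_i),\qquad g_n(v):=\mathbb{P}(vZ_0/a_n\in B),
\end{align*}
so the compensator evaluated at $R$ is $A_n(R)=\sum_{sn<i\le tn}g_n(\sigma_i)$, involving only the \emph{marginal} law of $Z_0$. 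For the LMSV model this coincides with the conditional intensity given the volatility sequence, which is the route of \cite{DavisMikosch2001}; compensating with respect to the natural filtration is precisely what makes the argument go through in the presence of leverage.

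The crux is to show $A_n(R)\to(t-s)\lambda_0(B)$ in probability. By Breiman's Lemma, cf.\ (\ref{eq:Breiman-1}), and the definition of $a_n$, $n\,\mathbb{P}(Z_0/a_n\in B)\to\lambda_0(B)/\mathbb{E}[\sigma^\alpha(X_0)]$; by the uniform convergence theorem for regularly varying tails, $n\,g_n(v)=n\,\mathbb{P}(Z_0/a_n\in v^{-1}B)\to v^\alpha\lambda_0(B)/\mathbb{E}[\sigma^\alpha(X_0)]$ uniformly for $v\in(0,M]$, and a Potter bound gives $n\,g_n(v)\le C(1\vee v)^{\alpha+\kappa}$ with $\alpha+\kappa<q$. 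Writing $A_n(R)=n^{-1}\sum_{sn<i\le tn}n\,g_n(\sigma_i)$ and splitting at $\{\sigma_i\le M\}$: on $\{\sigma_i\le M\}$ the uniform estimate reduces the sum, up to a vanishing error, to $n^{-1}\sum_{sn<i\le tn}\sigma_i^\alpha\mathbf 1\{\sigma_i\le M\}\cdot\lambda_0(B)/\mathbb{E}[\sigma^\alpha(X_0)]$, which converges a.s.\ to $(t-s)\,\mathbb{E}[\sigma_0^\alpha\mathbf 1\{\sigma_0\le M\}]\,\lambda_0(B)/\mathbb{E}[\sigma^\alpha(X_0)]$ by Birkhoff's ergodic theorem, since $\{\sigma_i\}$ is stationary and ergodic (it is a factor of the i.i.d.~sequence $\{\eta_i\}$); on $\{\sigma_i>M\}$ the Potter bound and (\ref{eq:sigma-assumption}) make the remainder nonnegative with expectation at most $(t-s)C\,\mathbb{E}[\sigma_0^{\alpha+\kappa}\mathbf 1\{\sigma_0>M\}]\to 0$ as $M\to\infty$. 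Letting $M\to\infty$ after $n\to\infty$, and using $\mathbb{E}[\sigma_0^\alpha]=\mathbb{E}[\sigma^\alpha(X_0)]$, gives $A_n(R)\to(t-s)\lambda_0(B)$ in probability; the same computation handles finite unions of such rectangles over disjoint time intervals.

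For negligibility of the atoms I check $\max_{sn<i\le tn}g_n(\sigma_i)\to 0$ in probability. Choose $q'\in(\alpha,q)$ and $M_n\to\infty$ with $n^{1/q'}\ll M_n\ll n^{1/\alpha}$, which is possible because $q>\alpha$. On $\{\sigma_i\le M_n\}$ one has $g_n(\sigma_i)\le\mathbb{P}(|Z_0|>c\,a_n/M_n)\to 0$ (here $c>0$ is the distance from $B$ to the origin), since $a_n/M_n\to\infty$, while $\mathbb{P}(\max_{sn<i\le tn}\sigma_i>M_n)\le n\,\mathbb{E}[\sigma_0^{q'}]M_n^{-q'}\to 0$ by (\ref{eq:sigma-assumption}) and the choice of $M_n$. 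Brown's theorem then yields $\sum_{i=1}^n\delta_{(i/n,Y_i/a_n)}\Rightarrow$ the Poisson point process with mean measure $\nu_0$.

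I expect the main obstacle to be the compensator convergence, i.e.\ the interchange of the ergodic theorem with the $n$-dependent regularly varying weights $g_n$: one must control both the uniform rate in $n\,g_n(v)\to v^\alpha\lambda_0(B)/\mathbb{E}[\sigma^\alpha(X_0)]$ on bounded $v$-ranges and, via a Potter bound and (\ref{eq:sigma-assumption}), the contribution of the large (light, but possibly long-memory) values of $\sigma_i$. A second, more conceptual point --- and the one where the leverage case genuinely differs --- is that the compensator must be taken with respect to the natural filtration $\{\mathcal F_i\}$ rather than by conditioning on $\sigma(\eta_j:j\in\mathbb Z)$: the latter conditioning destroys the relevant independence when the heavy tail of $Z_0$ is itself carried by a function of $\eta_0$, and need not yield a concentrating conditional intensity, whereas the compensator above, depending on $Z_0$ only through its marginal law, always does.
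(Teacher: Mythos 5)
Your argument is correct, but it takes a genuinely different route from the paper's. The paper proves Proposition~\ref{prop:pp-univarie} jointly with Propositions~\ref{prop:sv-pp}--\ref{prop:egarch-pp-quadratic} by the Davis--Mikosch blocking scheme: an $m$-dependent truncation $X_i^{(m)}=\sum_{k=1}^m c_k\eta_{i-k}$ is introduced, convergence for $\{Y_i^{(m)}\}$ is obtained by checking vague convergence of the marginal law (Breiman's Lemma) together with the anti-clustering condition $\lim_k\limsup_n n\sum_{i=2}^{[n/k]}\mathbb{E}[g(\mathbf Y_{n,1}^{(m)})g(\mathbf Y_{n,i}^{(m)})]=0$, which rests on $m$-dependence and the joint-exceedance bound~(\ref{eq:Breiman-0b}); a second limit $m\to+\infty$ then removes the truncation via $\lim_m\limsup_n n\mathbb{P}(a_n^{-1}|Y_0-Y_0^{(m)}|>\epsilon)=0$. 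You instead apply the Brown/Durrett--Resnick compensator criterion directly to the untruncated sequence, exploiting that $\sigma_i$ is $\mathcal F_{i-1}$-measurable while $Z_i$ is independent of $\mathcal F_{i-1}$, and you identify the limit of the compensator through the ergodic theorem for $\{\sigma_i^\alpha\}$ combined with the uniform convergence theorem for regularly varying tails and a Potter bound; the hypothesis $q>\alpha$ enters exactly where it does in the paper, namely to control the contribution of large values of $\sigma_i$ and to kill the maximal atom. Your route avoids the double limit, needs no continuity of $\sigma$, and makes transparent why leverage is irrelevant here (the compensator sees $Z_0$ only through its marginal law); its limitation is that it does not carry over to the multivariate process $N_n$ of the subsequent propositions, because the marks $Y_iY_{i+s}$ are not functions of $(\mathcal F_{i-1},Z_i)$, which is presumably why the paper handles all four statements with the blocking argument. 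One small repair: the condition $M_n\ll n^{1/\alpha}$ does not by itself ensure $a_n/M_n\to\infty$, since the slowly varying factor in $a_n$ may tend to zero; you should require $n^{1/q'}\ll M_n=o(a_n)$, which is achievable (e.g.\ $M_n=n^{1/q''}$ with $\alpha<q''<q'$) precisely because $q'>\alpha$ and $a_n$ is regularly varying of index $1/\alpha$.
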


For the multivariate point process $N_n$, we consider first LMSV models and then
models with leverage.

\subsection{Point process convergence: LMSV case}
\begin{prop}
  \label{prop:sv-pp}
  Let Assumption~\ref{hypo:iid-bivarie} hold and assume that the sequences
  $\{\eta_i\}$ and $\{Z_i\}$ are independent. Assume that the continuous
  volatility function $\sigma$ satisfies (\ref{eq:sigma-assumption}) for some
  $q>2\alpha$. Then
\begin{equation}
  \label{eq:pp-conv}
  N_{n} \Rightarrow  \sum_{i=0}^h \sum_{k=1}^{\infty} \delta_{(t_k,j_{k,i}\mathbf e_i)},
\end{equation}
where $\sum_{k=1}^{\infty} \delta_{(t_k,j_{k,0})}, \dots,
\sum_{k=1}^{\infty} \delta_{(t_k,j_{k,h})}$ are independent Poisson
processes with mean measures $\nu_{0},\ldots,\nu_{h}$, and
${\bf e}_i \in \mathbb R^{h+1}$ is the $i$-th basis component. Here,
$\Rightarrow$ denotes convergence in distribution in the space of
Radon point measures on $(0,1]\times[-\infty,\infty]^{h+1} \setminus
\{{\bf 0}\}$ equipped with the vague topology.
\end{prop}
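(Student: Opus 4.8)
\textbf{Proof proposal for Proposition~\ref{prop:sv-pp}.}
The plan is to prove the point process convergence by verifying Kallenberg's criterion for convergence to a Poisson process: it suffices to check convergence of the mean measures $\mathbb{E}[N_n(\cdot)] \to \nu(\cdot)$ on a suitable class of relatively compact rectangles, together with the asymptotic independence (avoidance) condition $\mathbb{P}(N_n(A) = 0) \to \exp(-\nu(A))$ for $A$ in a dissecting semiring of such rectangles. First I would record that, by the tail equivalences established in Lemma~\ref{lem:asympt-indep-lmsv}, each coordinate $a_n^{-1}Y_i$ and $b_n^{-1}Y_iY_{i+s}$ is individually regularly varying, so that for a product set $B = (t_1,t_2] \times \prod_{s=0}^h B_s$ with each $B_s$ bounded away from $0$, one has $n\,\mathbb{P}(\mathbf{Y}_{n,i} \in \prod_s B_s) \to$ (a sum over $s$ of one-dimensional contributions), because by~(\ref{eq:Breiman-0b}), (\ref{eq:indep1}) and~(\ref{eq:indep-products}) the event that two distinct coordinates of $\mathbf{Y}_{n,i}$ are simultaneously large is negligible. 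This forces the limiting mean measure to be supported on the union of the coordinate axes $\mathbb{R}\mathbf{e}_i$, with the $s$-th axis carrying $\mathrm{d}t \otimes \lambda_s(\mathrm{d}x)$, matching the right-hand side of~(\ref{eq:pp-conv}).

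Next I would establish the avoidance probabilities. Writing $A = (t_1,t_2]\times\mathcal{B}$ with $\mathcal{B}$ a rectangle in $[-\infty,\infty]^{h+1}\setminus\{\mathbf{0}\}$ bounded away from $\mathbf{0}$, one has $\mathbb{P}(N_n(A)=0) = \mathbb{P}(\mathbf{Y}_{n,i}\notin\mathcal{B}$ for all $i \in (nt_1,nt_2])$. The heuristic is that the $\mathbf{Y}_{n,i}$ behave like a triangular array of row-wise "almost independent" vectors once one conditions on the volatility sequence $\{\sigma_i\}$: conditionally on $\sigma$, the $Z_i$ are i.i.d., and the dependence among the $Y_iY_{i+s}$ for overlapping $i$ is confined to a bounded window of size $h$, so a blocking argument (cut $(nt_1,nt_2]$ into large blocks separated by small spacer blocks of length $\gg h$) together with the standard Poisson approximation (Loynes-type / Meyer's theorem) reduces the problem to showing $\sum_i \mathbb{P}(\mathbf{Y}_{n,i}\in\mathcal{B}) \to \nu(A)$ and that within-block clustering is negligible. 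The within-block negligibility is exactly where~(\ref{eq:Breiman-0b}), (\ref{eq:indep1}) and~(\ref{eq:indep-products}) are used to kill cross-terms, while the randomness of $\{\sigma_i\}$ is controlled by a bounded-convergence argument using~(\ref{eq:sigma-assumption}) with $q>2\alpha$ to dominate the conditional tail ratios uniformly (via Potter-type bounds, cf.\ Lemma~\ref{lem:bound-potter}). Since our setting is that of~\cite{DavisMikosch2001}, I would follow their proof closely, the only new ingredient being that $\sigma$ is a general function rather than $\exp$, which merely changes the constants $\mathbb{E}[(\sigma_0\sigma_h)^\alpha]$ appearing in $d_\pm$ and is handled by the moment bound~(\ref{eq:sigma-assumption}).

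The main obstacle I anticipate is not the one-dimensional tail asymptotics (these are Breiman's lemma plus Lemma~\ref{lem:asympt-indep-lmsv}) but rather the joint handling of the coordinates $b_n^{-1}Y_iY_{i+s}$ for different $s$ and of the mismatch between the normalizations $a_n$ and $b_n$: one must show that a single large $Z_i$ produces, asymptotically, an exceedance in \emph{exactly one} coordinate direction of $\mathbf{Y}_{n,i}$ (or, across indices $i$, that the images of one big $Z_i$ under the $h+1$ maps do not pile up), and that the $\mathbf{Y}_{n,i}$ for indices $i$ within distance $h$ of each other do not jointly exceed. The estimate $a_n = o(b_n)$ from~(\ref{eq:domination}) is what decouples the first coordinate from the rest, and~(\ref{eq:indep1})--(\ref{eq:indep-products}) decouple the product coordinates from each other; assembling these into the $m$-dependence-type argument that underlies the Poisson limit is the technically delicate part. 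Once the mean-measure convergence and the avoidance probabilities are in hand on the generating semiring, Kallenberg's theorem delivers~(\ref{eq:pp-conv}), with the independence of the $h+1$ limiting Poisson processes following from the fact that their mean measures live on disjoint axes and the avoidance probabilities factorize in the limit.
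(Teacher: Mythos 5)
Your description of the limiting mean measure (concentration on the coordinate axes via~(\ref{eq:Breiman-0b}), (\ref{eq:indep1}), (\ref{eq:indep-products}) and the decoupling $a_n=o(b_n)$) matches the paper, but the mechanism you propose for the Poisson limit itself has a genuine gap, and it is precisely the gap the paper is designed to close. You write that you would ``follow the proof of \cite{DavisMikosch2001} closely, the only new ingredient being that $\sigma$ is a general function rather than $\exp$.'' The paper explicitly warns that this cannot be done: the proof of \cite[Theorem~3.2]{DavisMikosch2001} leans on the proof of Theorem~2.4 of \cite{davis:resnick:1985l}, which uses condition~(2.6) there, and that condition \emph{rules out long memory}. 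Under~(\ref{eq:model-1}) the covariances of $\{X_i\}$ are not summable, so the volatility sequence is not ``almost independent across blocks'' in any mixing sense, and your blocking argument with spacer blocks of length $\gg h$ does not, by itself, produce asymptotically independent blocks. Conditioning on $\{\sigma_i\}$ does make the array conditionally $h$-dependent, but then the quantity that must converge is the \emph{random} conditional intensity $\sum_i \mathbb{P}(\mathbf{Y}_{n,i}\in\mathcal{B}\mid\sigma)$, not its expectation $\sum_i\mathbb{P}(\mathbf{Y}_{n,i}\in\mathcal{B})$; you would need a weak law of large numbers for this functional of a long-memory Gaussian process (in probability, not just in mean), and your proposal does not supply one. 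This is not a cosmetic omission: for functionals of long-memory Gaussian sequences such laws of large numbers require variance bounds of the type~(\ref{eq:variance-inequality-lrd}) and are exactly where the dependence structure can fail to be negligible.

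The paper's route is different and is built to avoid this issue. It first replaces $X_i$ by the $m$-dependent truncation $X_i^{(m)}=\sum_{k=1}^m c_k\eta_{i-k}$ and proves the point process convergence for $Y_i^{(m)}=\sigma(X_i^{(m)})Z_i$; there the analogue of your anti-clustering condition, namely
\begin{align*}
\lim_{k\to+\infty}\limsup_{n\to+\infty} n\sum_{i=2}^{[n/k]}\mathbb{E}\bigl[g(\mathbf{Y}_{n,1}^{(m)})\,g(\mathbf{Y}_{n,i}^{(m)})\bigr]=0 ,
\end{align*}
is verified by exact factorization of the expectation for $i>1+m+h$ (genuine $m$-dependence) plus the asymptotic independence estimates for the finitely many remaining lags. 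The second step shows that the truncation is uniformly negligible at the point-process level, i.e.
$\lim_{m\to+\infty}\limsup_{n\to+\infty} n\,\mathbb{P}(b_n^{-1}|Y_0Y_s-Y_0^{(m)}Y_s^{(m)}|>\epsilon)=0$ and the analogous statement with $a_n$, via Breiman's lemma and the $L^\alpha$ convergence of $\sigma(X_0^{(m)})$ to $\sigma(X_0)$ guaranteed by~(\ref{eq:sigma-assumption}) with $q>2\alpha$, together with convergence of the truncated mean measures $d_\pm^{(m)}(s)\to d_\pm(s)$. If you want to salvage your conditional/blocking approach, you must either prove the in-probability convergence of the conditional intensity under long memory, or insert the $m$-dependent approximation step; as written, the proposal's reduction to the unconditional mean-measure convergence is not justified.
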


\subsection{Point process convergence:  case of leverage}

\begin{prop}
  \label{prop:egarch-pp-expo}
  Let Assumption~\ref{hypo:iid-bivarie} hold. Assume that $\sigma(x) = \exp(x)$
  and $Z_0\exp(c \eta_0)$ is tail equivalent to $Z_0$ for all $c$.  Then the
  convergence~(\ref{eq:pp-conv}) holds.
\end{prop}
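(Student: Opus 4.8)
The plan is to follow the proof of Proposition~\ref{prop:sv-pp}, substituting at each point where the independence of $\{\eta_i\}$ and $\{Z_i\}$ was used the tail estimate now available: under the present hypotheses Lemma~\ref{lemma:asymp-indep-EGARCH-expo} guarantees that all the conclusions of Lemma~\ref{lem:asympt-indep-lmsv} hold, so the marginal tails~(\ref{eq:Breiman-1}), the anti-clustering relations~(\ref{eq:Breiman-0b}), (\ref{eq:indep1}), (\ref{eq:indep-products}) and the domination~(\ref{eq:domination}) are all available. Since $\nu_0,\dots,\nu_h$ are carried by distinct coordinate axes of $[-\infty,\infty]^{h+1}$, hence mutually singular, the assertion~(\ref{eq:pp-conv}) is equivalent to $N_n\Rightarrow\mathrm{PPP}(\nu)$, where $\mathrm{PPP}(\nu)$ denotes the Poisson point process with mean measure $\nu:=\sum_{s=0}^h\nu_s$; the marginal behaviour of the first coordinate is already Proposition~\ref{prop:pp-univarie}. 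The difficulty peculiar to the leverage case is that $X_i=\sum_{j\ge1}c_j\eta_{i-j}$ depends on the whole past of $\{\eta_j\}$, so $\{\mathbf Y_{n,i}\}$ is not finitely dependent and a direct blocking argument is unavailable. I would therefore truncate: put $X_i^{(m)}=\sum_{j=1}^m c_j\eta_{i-j}$, $\tilde X_i^{(m)}=X_i-X_i^{(m)}$, $Y_i^{(m)}=\exp(X_i^{(m)})Z_i$, and form $\mathbf Y_{n,i}^{(m)}=(a_n^{-1}Y_i^{(m)},b_n^{-1}Y_i^{(m)}Y_{i+1}^{(m)},\dots,b_n^{-1}Y_i^{(m)}Y_{i+h}^{(m)})$ and $N_n^{(m)}=\sum_{i=1}^n\delta_{(i/n,\mathbf Y_{n,i}^{(m)})}$. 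Since $\mathbf Y_{n,i}^{(m)}$ depends only on $Z_i,\dots,Z_{i+h}$ and $\eta_{i-m},\dots,\eta_{i+h-1}$, the array $\{\mathbf Y_{n,i}^{(m)}\}_i$ is $(m+h)$-dependent; and since $Z_i$ is independent of $X_i^{(m)}$ and $Z_0\exp(c\eta_0)$ is tail equivalent to $Z_0$, the computations of the proofs of Lemma~\ref{lem:asympt-indep-lmsv} and Lemma~\ref{lemma:asymp-indep-EGARCH-expo} carry over to $\{Y_i^{(m)}\}$ and give the analogues of (\ref{eq:Breiman-1}), (\ref{eq:Breiman-0b}), (\ref{eq:indep1}), (\ref{eq:indep-products}) (with the original $a_n,b_n$, up to multiplicative constants that tend to $1$ as $m\to\infty$).

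Next, for fixed $m$, I would establish $N_n^{(m)}\Rightarrow\mathrm{PPP}(\nu^{(m)})$, where $\nu^{(m)}$ is carried by the coordinate axes and $\nu^{(m)}\to\nu$ as $m\to\infty$ (the constants in $\nu^{(m)}$ are Breiman expectations of $\exp(\alpha X_0^{(m)})$-factors and tail-equivalence constants of the factors $\exp(c\eta_0)Z_0$ relative to $Z_0$, all of which converge to those defining $\nu$ because $\exp(\tilde X_0^{(m)})\to1$ in every $L^q$). By Breiman's lemma the mean measures $n\mathbb{P}(a_n^{-1}Y_1^{(m)}\in\cdot)$ and $n\mathbb{P}(b_n^{-1}Y_1^{(m)}Y_{1+s}^{(m)}\in\cdot)$ converge, and by~(\ref{eq:domination}) together with the truncated forms of~(\ref{eq:indep1}) and~(\ref{eq:indep-products}) no two coordinates of $\mathbf Y_{n,1}^{(m)}$ can be simultaneously of their orders, so $n\mathbb{P}(\mathbf Y_{n,1}^{(m)}\in\cdot)$ converges vaguely to $\nu^{(m)}$. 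A standard blocking argument then applies: partition $\{1,\dots,n\}$ into $k_n\sim n/r_n$ blocks of length $r_n$ with $r_n\to\infty$, $r_n=o(n)$, and delete a terminal segment of length $m+h$ from each, so that the resulting block point processes $M_n^{(1)},\dots,M_n^{(k_n)}$ become independent by $(m+h)$-dependence while only $k_n(m+h)=o(n)$ indices are lost; these form a null array, and Kallenberg's criterion applies, the condition $\sum_\ell\mathbb{P}(M_n^{(\ell)}(A)\ge2)\to0$ (for $A$ bounded away from $\mathbf 0$) being verified through the truncated analogues of~(\ref{eq:Breiman-0b}), (\ref{eq:indep1}), (\ref{eq:indep-products}) for the $O(n)$ pairs of indices within distance $m+h$, and through the crude bound $O(k_nr_n^2/n^2)=O(r_n/n)$, using independence, for the farther pairs. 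Hence $N_n^{(m)}\Rightarrow\mathrm{PPP}(\nu^{(m)})$.

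It remains to remove the truncation, i.e.\ to show that, for every nonnegative continuous $f$ with compact support in $(0,1]\times([-\infty,\infty]^{h+1}\setminus\{\mathbf 0\})$ and every $\delta>0$,
\begin{align*}
  \lim_{m\to\infty}\ \limsup_{n\to\infty}\ \mathbb{P}\bigl(|N_nf-N_n^{(m)}f|>\delta\bigr)=0 \; .
\end{align*}
Together with $N_n^{(m)}\Rightarrow\mathrm{PPP}(\nu^{(m)})$ and $\nu^{(m)}\to\nu$, this yields $N_n\Rightarrow\mathrm{PPP}(\nu)$, which is~(\ref{eq:pp-conv}). Now $|N_nf-N_n^{(m)}f|$ is at most a constant multiple of the number of indices $i\le n$ for which $\mathbf Y_{n,i}$ or $\mathbf Y_{n,i}^{(m)}$ lies in a fixed neighbourhood of $\mathrm{supp}(f)$ while these two vectors differ noticeably; and since $Y_i/Y_i^{(m)}=\exp(\tilde X_i^{(m)})$, $Y_iY_{i+s}/(Y_i^{(m)}Y_{i+s}^{(m)})=\exp(\tilde X_i^{(m)}+\tilde X_{i+s}^{(m)})$, with $\tilde X_i^{(m)}$ Gaussian of variance $\sum_{j>m}c_j^2\to0$, these ratios tend to $1$ in every $L^q$ uniformly in $i$. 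Bounding $\mathbb{P}(|Z_0|>y)$ by a Potter-type inequality (Lemma~\ref{lem:bound-potter}) and using the finiteness of all exponential moments of the Gaussian variables involved, the expected number of such bad indices is $O\bigl(n\,\mathbb{P}(|Z_0|>ca_n)\bigr)\times\varepsilon(m)=O(\varepsilon(m))$ with $\varepsilon(m)\to0$ uniformly in $n$; this is the same computation as in the proof of Proposition~\ref{prop:sv-pp}, the only new ingredient being that whenever an $\eta_i$ correlated with $Z_i$ enters through $X_{i+1},\dots,X_{i+h}$ one appeals to the hypothesis that $\exp(c\eta_0)Z_0$ is tail equivalent to $Z_0$, so that the corresponding factor changes neither the tail order nor the relevant constants. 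I expect this last, uniform-in-$n$ truncation estimate --- and within it the verification that leverage does not inflate the joint exceedances --- to be the main technical obstacle; the remaining steps are a routine transcription of the LMSV argument.
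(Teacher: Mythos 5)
Your proposal follows essentially the same route as the paper: an $m$-dependent truncation $X_i^{(m)}=\sum_{j=1}^m c_j\eta_{i-j}$, point process convergence for fixed $m$ using the asymptotic independence of the coordinates of $\mathbf Y_{n,i}^{(m)}$, convergence of the truncated mean measures to $\nu$, and a uniform-in-$n$ truncation estimate in which the factorization $Y_0Y_s=\exp(X_0+\hat X_s)\,Z_0\exp(c_s\eta_0)Z_s$ isolates the $\eta_0$--$Z_0$ dependence so that Breiman's lemma and the tail-equivalence hypothesis apply. The only cosmetic difference is that you verify the fixed-$m$ step via blocking and Kallenberg's criterion where the paper checks the Davis--Mikosch conditions (3.3)--(3.4); these are interchangeable, and your key estimate $\limsup_n n\mathbb{P}(b_n^{-1}|Y_0Y_s-Y_0^{(m)}Y_s^{(m)}|>\epsilon)\le C\epsilon^{-\alpha}\mathbb{E}[|\tilde W_{m,s}|^{\alpha}]$ is exactly the one the paper uses.
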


\begin{prop}
  \label{prop:egarch-pp-quadratic}
  Let Assumption~\ref{hypo:iid-bivarie} hold. Assume that the distribution of
  $(Z_0,\eta_0)$ and the function $\sigma$ satisfy the assumptions of
  Lemma~\ref{lemma:asymp-indep-EGARCH-quadratic} and moreover
  \begin{align}
    |\sigma(x+y) - \sigma(x+z)| \leq C (\sigma(x) \vee 1) \{(\sigma( y) \vee 1)
    +(\sigma(z) \vee 1) \} |y-z| \; . \label{eq:sigma-condition-truncation}
  \end{align}
  Assume that condition~(\ref{eq:sigma-assumption}) holds for some $q>2\alpha$.
  Then the convergence~(\ref{eq:pp-conv}) holds.
\end{prop}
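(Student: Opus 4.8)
The plan is to adapt the finite-dependence approximation used to prove Propositions~\ref{prop:sv-pp} and~\ref{prop:egarch-pp-expo}. When $\sigma=\exp$ one compares the process with a truncation via the multiplicative decomposition $\exp(X_i)=\exp(X_{i,m})\exp(X_i-X_{i,m})$; for a general subadditive $\sigma$ this is not available and is replaced by the Lipschitz-type bound~\eqref{eq:sigma-condition-truncation}. For $m\ge1$ set $X_{i,m}=\sum_{j=1}^m c_j\eta_{i-j}$, $Y_{i,m}=\sigma(X_{i,m})Z_i$, and let $N_n^{(m)}=\sum_{i=1}^n\delta_{(i/n,\mathbf Y_{n,i}^{(m)})}$ with $\mathbf Y_{n,i}^{(m)}=(a_n^{-1}Y_{i,m},b_n^{-1}Y_{i,m}Y_{i+1,m},\dots,b_n^{-1}Y_{i,m}Y_{i+h,m})$, keeping the same norming sequences as for $N_n$. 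Then $\{Y_{i,m}\}$ is itself a model with leverage, with finitely many leverage lags, and the hypotheses of Lemma~\ref{lemma:asymp-indep-EGARCH-quadratic} are unchanged, so all the conclusions of Lemma~\ref{lem:asympt-indep-lmsv} — the constants $d_\pm(s)$, the domination $a_n=o(b_n)$ and the joint exceedance estimates~\eqref{eq:indep1}--\eqref{eq:indep-products} — hold for $\{Y_{i,m}\}$ as well.

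First, for each fixed $m$ the array $\{\mathbf Y_{n,i}^{(m)}\}_{1\le i\le n}$ is $(m+h)$-dependent, since $\mathbf Y_{n,i}^{(m)}$ is a function of $\eta_{i-m},\dots,\eta_{i+h-1}$ and $Z_i,\dots,Z_{i+h}$. By the standard block argument for finitely dependent triangular arrays, as in the proof of Proposition~\ref{prop:sv-pp}, weak convergence of $N_n^{(m)}$ reduces to vague convergence of the one-index mean measure $n\,\mathbb{P}(\mathbf Y_{n,0}^{(m)}\in\cdot)$ together with the within-block anti-clustering condition; both follow from the tail relations just recalled and a Breiman-type computation showing that the limiting measure is carried by the coordinate axes. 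This gives $N_n^{(m)}\Rightarrow N^{(m)}$, a superposition of independent Poisson processes on the $h+1$ axes with mean measures $\nu_0^{(m)},\dots,\nu_h^{(m)}$ built from $\alpha$, $\beta$ and truncated constants $d_\pm^{(m)}(s)$ (up to the ratios $\mathbb{E}[\sigma^\alpha(X_{0,m})]/\mathbb{E}[\sigma^\alpha(X_0)]$ and $\mathbb{E}[\sigma^\alpha(X_{0,m})\sigma^\alpha(X_{s,m})]/\mathbb{E}[\sigma^\alpha(X_0)\sigma^\alpha(X_1)]$, which appear because the untruncated norming is kept). By continuity of $\sigma$ and dominated convergence, valid since~\eqref{eq:sigma-assumption} holds with $q>2\alpha$, these constants converge to their untruncated analogues, so $\nu_s^{(m)}\to\nu_s$ vaguely and $N^{(m)}\Rightarrow N$, the right-hand side of~\eqref{eq:pp-conv}.

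It remains to prove that the truncation is negligible, i.e. $\lim_{m\to\infty}\limsup_{n\to\infty}\mathbb{P}(\rho(N_n^{(m)},N_n)>\varepsilon)=0$ for a metric $\rho$ of vague convergence; by stationarity and a union bound it is enough that, for every $\delta>0$,
\[
\lim_{m\to\infty}\ \limsup_{n\to\infty}\ n\,\mathbb{P}\!\left(a_n^{-1}|Y_0-Y_{0,m}|+b_n^{-1}\sum_{s=1}^h|Y_0Y_s-Y_{0,m}Y_{s,m}|>\delta\right)=0 .
\]
For the first term, write $R_{0,m}=X_0-X_{0,m}=\sum_{j>m}c_j\eta_{-j}$; applying~\eqref{eq:sigma-condition-truncation} with $x=X_{0,m}$, $y=R_{0,m}$, $z=0$ yields $|Y_0-Y_{0,m}|\le C|Z_0|\,W_m$ with $W_m=(\sigma(X_{0,m})\vee1)\bigl\{(\sigma(R_{0,m})\vee1)+(\sigma(0)\vee1)\bigr\}|R_{0,m}|$. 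As a function of $\{\eta_j,\ j\le-1\}$, $W_m$ is independent of $Z_0$; bounding its $\sigma$-factors uniformly in $m$ by~\eqref{eq:sigma-assumption} (the variances of $X_{0,m}$ and $R_{0,m}$ being at most $1$) and using that $\mathbb{E}[|R_{0,m}|^r]\to0$ for every $r$ (since $R_{0,m}$ is centered Gaussian with variance $\sum_{j>m}c_j^2\to0$), Hölder's inequality gives $\mathbb{E}[W_m^{q'}]\to0$ for some $q'\in(\alpha,q/2)$. Breiman's Lemma and $n\,\mathbb{P}(|Z_0|>a_n)\to\mathbb{E}[\sigma^\alpha(X_0)]^{-1}$ then yield $n\,\mathbb{P}(C|Z_0|W_m>\delta a_n)\to(C/\delta)^\alpha\mathbb{E}[W_m^\alpha]/\mathbb{E}[\sigma^\alpha(X_0)]\to0$. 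The product terms are treated by writing $Y_0Y_s-Y_{0,m}Y_{s,m}=(Y_0-Y_{0,m})Y_s+Y_{0,m}(Y_s-Y_{s,m})$, applying~\eqref{eq:sigma-condition-truncation} to each difference, and then using the subadditivity of $\sigma$ to split $\sigma(X_s)$ into a part built from the leverage lags $\eta_0,\dots,\eta_{s-1}$ — controlled through the tail-equivalence hypotheses on $\sigma(a\xi+b\eta_0)Z_0$ and $\sigma(b\eta_0)Z_0$ in Lemma~\ref{lemma:asymp-indep-EGARCH-quadratic} — and a part built from the distant noise $\eta_{-1},\eta_{-2},\dots$, handled by Breiman's Lemma; each term is then dominated, up to a multiplicative constant, by a variable tail equivalent to $|Z_0Z_s|$, hence to $|Y_0Y_1|$, times a remainder factor carrying $|R_{0,m}|$ or $|R_{s,m}|$ whose $\alpha$-th moment tends to $0$, so $n\,\mathbb{P}(\,\cdot>\delta b_n)\to0$ in the iterated limit.

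Combining these three steps through the standard triangulation criterion for convergence in distribution (see~\cite{DavisMikosch2001} and the proof of Proposition~\ref{prop:sv-pp}) yields $N_n\Rightarrow N$, which is~\eqref{eq:pp-conv}. The main obstacle is the last step, and within it the product terms: one must simultaneously disentangle the leverage dependence (through the appearance of $\eta_0,\dots,\eta_{s-1}$ in $X_s$), the heavy-tailed product structure of $Z_0Z_s$, and the truncation remainder, all while keeping careful track of the available moments of $\sigma$ — this is precisely where the additional assumption~\eqref{eq:sigma-condition-truncation} and the margin $q>2\alpha$ are used.
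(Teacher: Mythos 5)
Your proposal is correct and follows essentially the same route as the paper: an $m$-dependent approximation $Y_i^{(m)}=\sigma(X_i^{(m)})Z_i$ with convergence for fixed $m$ via the Davis--Mikosch conditions, convergence of the truncated constants $d_\pm^{(m)}(s)$, and negligibility of the truncation error proved by splitting the product difference into two terms, one handled by subadditivity of $\sigma$ plus the tail-equivalence hypotheses on $\sigma(c_s\eta_0)|Z_0Z_s|$ and the other by the Lipschitz-type bound~(\ref{eq:sigma-condition-truncation}) applied with $x=c_s\eta_0$, each combined with Breiman's Lemma. The only (immaterial) deviations are that you also invoke~(\ref{eq:sigma-condition-truncation}) for the marginal term $|Y_0-Y_0^{(m)}|$, where the paper needs only continuity of $\sigma$ and dominated convergence, and that you factor $ab-a'b'$ the other way around.
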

The condition~(\ref{eq:sigma-condition-truncation}) is an ad-hoc condition which
is needed for a truncation argument used in the proof. It is satisfied by all
symmetric polynomials with positive coefficients. (The proof would not be
simplified by considering polynomials rather than functions satisfying this
assumption.)

\section{Partial Sums}
\label{sec:partial-sums}

Define
\begin{align*}
  S_n(t) & = \sum_{i=1}^{[nt]} Y_i \; , \ \ S_{p,n}(t) = \sum_{i=1}^{[nt]}
  |Y_i|^p \; .
\end{align*}
For any function $g$ such that $\mathbb{E}[g^2(\eta_0)]<\infty$ and any integer
$q\geq1$, define
\begin{align*}
  J_q(g) = \mathbb{E}[H_q(\eta_0)g(\eta_0)]\; ,
\end{align*}
where $H_q$ is the $q$-th Hermite polynomial. The Hermite rank
$\tau(g)$ of the function $g$ is the smallest positive integer
$\tau$ such that $J_\tau(g) \ne 0$.  Let $R_{\tau,H}$ be the
so-called Hermite process of order $\tau$ with self-similarity index
$1-\tau(1-H)$. See \cite{arcones:1994} or
Appendix~\ref{sec:LRD-Gaussian} for more details. Let $\stackrel{\scriptstyle \mathcal D}{\Rightarrow}$
denote convergence in the Skorokhod space $\mathcal D([0,1],\mathbb
R)$ of real valued right-continuous functions with left limits,
endowed with the $J_1$ topology, cf. \cite{whitt:2002}.
\begin{thm}
  \label{thm:partial-sums-egarch}
  Let Assumption~\ref{hypo:iid-bivarie} hold and assume that the function
  $\sigma$ is continuous and~(\ref{eq:sigma-assumption}) holds for some
  $q>2\alpha$.
  \begin{enumerate}[(i)]
  \item If $1 < \alpha < 2$ and $\mathbb{E}[Z_0]=0$, then $a_n^{-1} S_{n}$ converges
    weakly  in the space $\mathcal D([0,1),\mathbb R)$ endowed with
    Skorokhod's $J_1$ topology to an $\alpha$-stable L\'evy process with skewness~$2\beta-1$.
  \end{enumerate}
  Let $\tau_p=\tau(\sigma^p)$ be the Hermite rank of the function $\sigma^{p}$.
\begin{enumerate}[(i)] \addtocounter{enumi}{+1}
\item If $p<\alpha<2p$ and $1-\tau_p(1-H)<p/\alpha$, then
  \begin{equation}
    \label{eq:iid-convergence-egarch}
    a_n^{-p} (S_{p,n}-n\mathbb{E}[|Y_0|^p])  \stackrel{\scriptstyle \mathcal D}{\Rightarrow}  L_{\alpha/p} \; ,
  \end{equation}
  where $L_{\alpha/p}$ is a totally skewed to the right $\alpha/p$-stable L\'evy
  process.
\item If $p<\alpha<2p$ and $1-\tau_p(1-H)>p/\alpha$, then
  \begin{align}
    \label{eq:lrd-convergence-egarch}
    n^{-1} \rho_n^{-\tau_p/2} (S_{p,n} - n \mathbb{E}[|Y_0|^p]) \stackrel{\scriptstyle \mathcal D}{\Rightarrow}
    \frac{J_{\tau_p}(\sigma^p)\mathbb{E}[|Z_1|^p]} {\tau_p !} R_{\tau_p,H} \; .
  \end{align}
\item If $p> \alpha$, then $a_n^{-p} S_{p,n} \stackrel{\scriptstyle \mathcal D}{\Rightarrow} L_{\alpha/p}$, where
  $L_{\alpha/p}$ is a positive $\alpha/p$-stable L\'evy process.
\end{enumerate}

\end{thm}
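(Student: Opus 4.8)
My plan is to treat the partial sums of $\{Y_i\}$ (part~(i)) and of $\{|Y_i|^p\}$ (parts~(ii)--(iv)) by one common mechanism: peel off the heavy-tailed contribution, to which Proposition~\ref{prop:pp-univarie} and the usual summation functional apply, from the Gaussian long-memory contribution carried by $\{\sigma^p(X_i)\}$, which is governed by the (non-)central limit theory of Appendix~\ref{sec:LRD-Gaussian}. Two elementary observations will be used repeatedly. First, $\{Y_i\}$ is a stationary ergodic martingale difference sequence for $\{\mathcal{F}_i\}$ when $\mathbb{E}[Z_0]=0$, and likewise $\{\sigma^p(X_i)(|Z_i|^p-\mathbb{E}[|Z_0|^p])\}$ is one whenever $p<\alpha$, since $\sigma^p(X_i)$ is $\mathcal{F}_{i-1}$-measurable while $Z_i$ is independent of $\mathcal{F}_{i-1}$. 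Second, $a_n^{-1}\max_{1\le i\le n}\sigma(X_i)\to0$ in probability, because $\mathbb{E}[\sigma^q(X_0)]<\infty$ with $q>\alpha$ gives $\mathbb{P}(\max_i\sigma(X_i)>\delta a_n)\le n\,\mathbb{E}[\sigma^q(X_0)](\delta a_n)^{-q}\to0$. All large-/small-jump decompositions at level $\epsilon a_n$ (resp.\ $\epsilon a_n^p$) will be combined by letting $n\to\infty$ and then $\epsilon\to0$ via the standard ``convergence together'' argument, which I will not restate. I also note that only Proposition~\ref{prop:pp-univarie} enters, so neither the independence of $\{\eta_i\}$ and $\{Z_i\}$ nor the tail-equivalence hypotheses of Lemmas~\ref{lemma:asymp-indep-EGARCH-expo}--\ref{lemma:asymp-indep-EGARCH-quadratic} are needed; this is precisely why the partial-sum behaviour is the same with and without leverage.

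For part~(i), the plan is to fix $\epsilon>0$ and write $S_n(t)=A_n(t)+B_n(t)$, where $A_n(t)=\sum_{i\le[nt]}\{Y_i\mathbf{1}_{\{|Y_i|>\epsilon a_n\}}-\mathbb{E}[Y_i\mathbf{1}_{\{|Y_i|>\epsilon a_n\}}\mid\mathcal{F}_{i-1}]\}$ and $B_n(t)$ is the complementary sum with $\mathbf{1}_{\{|Y_i|\le\epsilon a_n\}}$, the two conditional expectations summing to $\mathbb{E}[Y_i\mid\mathcal{F}_{i-1}]=0$. Since $B_n$ has martingale increments, Doob's inequality and stationarity give $a_n^{-2}\mathbb{E}[\sup_{t\le1}B_n(t)^2]\le 4n a_n^{-2}\mathbb{E}[Y_0^2\mathbf{1}_{\{|Y_0|\le\epsilon a_n\}}]$, which by Karamata's theorem (here $\alpha<2$) tends to $\frac{4\alpha}{2-\alpha}\epsilon^{2-\alpha}$, hence to $0$ as $\epsilon\to0$. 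For $A_n$, the summation functional is a.s.\ continuous at the Poisson limit of Proposition~\ref{prop:pp-univarie} (which a.s.\ charges neither $\{|x|=\epsilon\}$ nor $\{|x|=\infty\}$, has finitely many atoms with $|x|>\epsilon$, and no two atoms at the same time), so $a_n^{-1}\sum_{i\le[nt]}Y_i\mathbf{1}_{\{|Y_i|>\epsilon a_n\}}$ converges in $\mathcal{D}$ to the compound Poisson process with L\'evy measure $\mathbf{1}_{\{|x|>\epsilon\}}\lambda_0(\mathrm{d}x)$. For the compensator, $\mathbb{E}[Y_i\mathbf{1}_{\{|Y_i|>\epsilon a_n\}}\mid\mathcal{F}_{i-1}]=\sigma(X_i)\,\mathbb{E}[Z_0\mathbf{1}_{\{|Z_0|>\epsilon a_n/\sigma(X_i)\}}]$, and combining Karamata's theorem, the Potter bounds of Lemma~\ref{lem:bound-potter} (applied within a finite-mean $\sigma^{\alpha+\delta}$ envelope), the ergodic theorem $n^{-1}\sum_{i\le[nt]}\sigma^\alpha(X_i)\to t\,\mathbb{E}[\sigma^\alpha(X_0)]$ and $n a_n^{-\alpha}L(a_n)\mathbb{E}[\sigma^\alpha(X_0)]\to1$, one gets $a_n^{-1}\sum_{i\le[nt]}\mathbb{E}[Y_i\mathbf{1}_{\{|Y_i|>\epsilon a_n\}}\mid\mathcal{F}_{i-1}]\to t\int_{|x|>\epsilon}x\,\lambda_0(\mathrm{d}x)$ uniformly in $t$, in probability. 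Hence $a_n^{-1}A_n$ converges to the compensated compound Poisson process, which by the L\'evy--It\^o representation converges, as $\epsilon\to0$, to the $\alpha$-stable L\'evy process with L\'evy measure $\lambda_0$, i.e.\ with mean $0$ and skewness $2\beta-1$; assembling the pieces yields~(i).

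For parts~(ii)--(iv) one works with $|Y_i|^p=\sigma^p(X_i)|Z_i|^p$. If $p<\alpha$, then $\mathbb{E}[|Y_0|^p]=\mathbb{E}[\sigma^p(X_0)]\,\mathbb{E}[|Z_0|^p]$ and
\[
  S_{p,n}(t)-[nt]\,\mathbb{E}[|Y_0|^p] = \sum_{i=1}^{[nt]}\sigma^p(X_i)\bigl(|Z_i|^p-\mathbb{E}[|Z_0|^p]\bigr) \;+\; \mathbb{E}[|Z_0|^p]\sum_{i=1}^{[nt]}\bigl(\sigma^p(X_i)-\mathbb{E}[\sigma^p(X_0)]\bigr).
\]
The summands of the first sum form a stationary ergodic martingale difference sequence and are, by Breiman's Lemma, regularly varying of index $\alpha/p\in(1,2)$, tail-equivalent to $|Y_0|^p$, and with negligible left tail (because $|Z_0|^p-\mathbb{E}[|Z_0|^p]$ is bounded below and $\sigma^p(X_0)$ has a moment of order $q/p>\alpha/p$); running the argument of part~(i) verbatim --- pushing $\sum\delta_{(i/n,Y_i/a_n)}$ forward by $x\mapsto|x|^p$ and shifting the second coordinate by the uniformly small $\sigma^p(X_i)\mathbb{E}[|Z_0|^p]/a_n^p$ --- shows $a_n^{-p}$ times the first sum converges in $\mathcal{D}$ to the $\alpha/p$-stable L\'evy process $L_{\alpha/p}$, totally skewed to the right and with zero mean. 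The second sum is a functional of the Gaussian process $\{X_i\}$ with Hermite rank $\tau_p$, whose partial sums are of order $n^{\max(1/2,\,1-\tau_p(1-H))}$ up to a slowly varying factor; when $1-\tau_p(1-H)>1/2$, the non-central limit theorem of Appendix~\ref{sec:LRD-Gaussian} (applicable since $q>2p$ gives $\sigma^p\in L^2$ and $\tau_p(1-H)<1/2$) gives $n^{-1}\rho_n^{-\tau_p/2}\sum_{i\le[nt]}(\sigma^p(X_i)-\mathbb{E}[\sigma^p(X_0)])\Rightarrow \frac{J_{\tau_p}(\sigma^p)}{\tau_p!}R_{\tau_p,H}(t)$. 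Now $a_n^p$ is regularly varying of index $p/\alpha$, $n\rho_n^{\tau_p/2}$ of index $1-\tau_p(1-H)$, and $p/\alpha>1/2$ since $\alpha<2p$; thus under the hypothesis $1-\tau_p(1-H)<p/\alpha$ of~(ii) the second sum is $o_{\mathbb{P}}(a_n^p)$ uniformly, giving~(ii), whereas under $1-\tau_p(1-H)>p/\alpha$ (which forces $1-\tau_p(1-H)>1/2$) the first sum is $o_{\mathbb{P}}(n\rho_n^{\tau_p/2})$ uniformly, leaving the Hermite limit with constant $J_{\tau_p}(\sigma^p)\mathbb{E}[|Z_1|^p]/\tau_p!$, which is~(iii). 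For~(iv), $p>\alpha$ forces $\alpha/p<1$, so $\mathbb{E}[|Z_0|^p]=\infty$ and no centering is used: pushing Proposition~\ref{prop:pp-univarie} forward by $x\mapsto|x|^p$ and applying the summation functional gives $a_n^{-p}\sum_{i\le[nt]}|Y_i|^p\mathbf{1}_{\{|Y_i|^p>\epsilon a_n^p\}}\Rightarrow$ the $\epsilon$-truncation of the positive $\alpha/p$-stable subordinator, while $|Y_i|^p\ge0$, $\alpha/p<1$ and Karamata give $a_n^{-p}n\,\mathbb{E}[|Y_0|^p\mathbf{1}_{\{|Y_0|^p\le\epsilon a_n^p\}}]\sim\frac{\alpha/p}{1-\alpha/p}\,\epsilon\,n\,\mathbb{P}(|Y_0|>\epsilon^{1/p}a_n)\to\frac{\alpha/p}{1-\alpha/p}\,\epsilon^{1-\alpha/p}\to0$, so that remainder is uniformly negligible (Markov and monotonicity); letting $\epsilon\to0$ proves~(iv).

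The one genuinely delicate point is, in parts~(i) and~(ii), matching the random conditional compensators $\mathbb{E}[\,\cdot\mid\mathcal{F}_{i-1}]$ of the large jumps with the deterministic L\'evy--It\^o drift $t\int_{|x|>\epsilon}x\,\lambda_0(\mathrm{d}x)$: since $\sigma(X_i)$ is unbounded, this needs uniform Karamata/Potter control of $L(\epsilon a_n/\sigma(X_i))/L(\epsilon a_n)$ under a $\sigma^\alpha(X_i)$-weighting, coupled with the ergodic theorem, and a verification that the required moment of $\sigma$ is present --- which it is, since $q>\alpha$ (a fortiori $q>2p$). The remaining ingredients --- the point-process input, the martingale maximal inequality, the non-central limit theorem, and the $\epsilon\to0$ passage --- are standard once this is in place.
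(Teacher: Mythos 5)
Your proposal is correct and follows essentially the same architecture as the paper's proof: the decomposition of $S_{p,n}-n\mathbb{E}[|Y_0|^p]$ into the martingale part $\sum_i\sigma^p(X_i)(|Z_i|^p-\mathbb{E}[|Z_0|^p])$ and the Gaussian-functional part $\mathbb{E}[|Z_0|^p]\sum_i(\sigma^p(X_i)-\mathbb{E}[\sigma^p(X_0)])$ is exactly the paper's $M_n+R_n$, the big jumps are handled by Proposition~\ref{prop:pp-univarie} and the summation functional, the small jumps by Doob plus Karamata, the Gaussian part by Arcones, and the dichotomy by comparing the regular-variation indices $p/\alpha$ and $1-\tau_p(1-H)$; part~(iv) and the $R_n\equiv 0$ observation for $S_n$ also match. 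The one point where you genuinely diverge is the matching of the random compensator $\sum_i\mathbb{E}[|Y_i|^p\mathbf 1_{\{|Y_i|>\epsilon a_n\}}\mid\mathcal F_{i-1}]$ with the deterministic drift: the paper centres this sum at its expectation and kills the fluctuation with the long-memory variance inequality~(\ref{eq:variance-inequality-lrd}), obtaining the bound $Cn^{2H-2+\epsilon}t^{2H-\epsilon}$ in~(\ref{eq:same-arguments}), whereas you invoke the ergodic theorem for $n^{-1}\sum_{i\le[nt]}\sigma^\alpha(X_i)$ together with uniform Karamata/Potter control. Both work; the paper's route is more quantitative and reuses the appendix machinery, while yours is more elementary (ergodicity of the Gaussian linear process is automatic) but requires the uniform-convergence bookkeeping you correctly flag as the delicate step.
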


Note that there is no effect of leverage. The situation will be
different for the sample covariances. The fact that when the marginal
distribution has infinite mean, long memory does not play any role and only a
stable limit can arise was observed in a different context by \cite{davis:1983}.

\section{Sample covariances}
\label{sec:sample-covariances}

In order to explain more clearly the nature of the results and the problems that
arise, we start by considering the sample covariances of the sequence $\{Y_i\}$,
without assuming that $\mathbb{E}[Z_0]=0$.  For notational simplicity, assume that we
observe a sample of length $n+h$. Assume that $\alpha>1$. Let $\bar
Y_n=n^{-1}\sum_{j=1}^nY_j$ denote the sample mean, $m=\mathbb{E}[Z_0]$,
$\mu_Y=\mathbb{E}[Y_0]=m\mathbb{E}[\sigma_0]$ and define the sample covariances by
\begin{align*}
  \hat\gamma_{n}(s) & = \frac1n \sum_{i=1}^{n} (Y_i-\bar Y_n) (Y_{i+s}-\bar Y_n)
  \; , \ 0 \leq s \leq h \; ,
\end{align*}
For simplicity, we have defined all the sample covariances as sums with the same
range of indices $1,\dots,n$. This obviously does not affect the asymptotic
theory.  For $s=0,\dots, h$, define furthermore
\begin{align*}
C_n(s) & = \frac1n \sum_{i=1}^{n} Y_iY_{i+s} \; .
\end{align*}
Then, defining $\gamma(s) = \mathrm{cov}(Y_0,Y_s)$, we have, for $s=0,\dots,h$,
\begin{align*}
  \hat \gamma_n(s) -\gamma(s) & = C_n(s) - \mathbb{E}[Y_0Y_s] + \mu_Y^2-\bar Y_n^2
  +O_P(1/n) \; .
\end{align*}
Under the assumptions of Theorem~\ref{thm:partial-sums-egarch}, $\bar
Y_n^2-\mu_Y^2 = O_P(a_n)$.  This term  never contributes to the limit.
Consider now $C_n(s)$. Recall that $\mathcal F_i$ is the sigma-field generated
by $(\eta_j,Z_j)$, $j\leq i$ and define
\begin{align*}
\hat{X}_{i,s} = \frac{\mathbb{E}[X_{i+s} \mid \mathcal
F_{i-1}]}{\mathrm{var}(\mathbb{E}[X_{i+s} \mid
    \mathcal F_{i-1}])} = \varsigma_s^{-1} \sum_{j=s+1}^\infty c_j \eta_{i+s-j} \; ,
\end{align*}
with $\varsigma_s^2 = \sum_{j=s+1}^\infty c_j^2$.  Let $K$ be the function
defined on $\mathbb R^2$ by
\begin{align}
  \label{eq:def-K}
  K(x,\hat x) = \mathbb{E}[Z_s] \mathbb{E} \left[ Z_0 \sigma(x) \sigma\left( \sum_{j=1}^s c_j
      \eta_{s-j} + \varsigma_s\hat x \right) \right] - \mathbb{E}[Y_0Y_s] \; .
\end{align}
Then, for each $i\ge 0$,  it holds that
\begin{align*}
  \mathbb{E}[ Y_iY_{i+s} \mid \mathcal F_{i-1} ] - \mathbb{E}[ Y_0Y_{s}] = K (X_i,\hat{X}_{i,s})
  \; .
\end{align*}
We see that if $m=\mathbb{E}[Z_s]=0$, then the function $K$ is identically
vanishing. We next write
\begin{align*}
  C_n(s) - \mathbb{E}[Y_0Y_s] & = \frac 1n \sum_{i=1}^{n} \{Y_iY_{i+s} -
  \mathbb{E}[Y_iY_{i+s} \mid \mathcal F_{i-1} ] \} + \frac1n \sum_{i=1}^{n}
  K(X_i,\hat{X}_{i,s})=\frac1n M_{n,s} + \frac 1n T_{n,s} \; .
\end{align*}
The point process convergence results of the previous section will allow to
prove that $b_n^{-1} M_{n,s}$ has a stable limit.  If $m=\mathbb{E}[Z]=0$, then this
will be the limit of $b_n^{-1}(C_n(s)- \mathbb{E}[Y_0Y_s])$, regardless of the
presence of leverage. We can thus state a first result. Let $\stackrel{\scriptstyle d}{\to}$ denote
weak convergence of sequences of finite dimensional random vectors.
\begin{thm}
  Assume that $\alpha\in (1,2)$ and $\mathbb{E}[Z_0]=0$.  Under the assumptions of
  Propositions~\ref{prop:sv-pp}, \ref{prop:egarch-pp-expo} or
  \ref{prop:egarch-pp-quadratic},
  \begin{align*}
    n b_n^{-1} (\hat\gamma_{n}(1)-\gamma(1),\dots,\hat\gamma_{n}(h) -\gamma(h))
    \stackrel{\scriptstyle d}{\to} (\mathcal L_1,\dots,\mathcal L_h) \; ,
  \end{align*}
  where $\mathcal L_1,\dots,\mathcal L_h$ are independent $\alpha$-stable random variables.
\end{thm}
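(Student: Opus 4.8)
The plan is to reduce the statement to the point process convergence established in the previous section together with the decomposition $C_n(s) - \mathbb{E}[Y_0Y_s] = n^{-1}M_{n,s} + n^{-1}T_{n,s}$ already displayed above. Since $\mathbb{E}[Z_0]=0$, the function $K$ defined in~(\ref{eq:def-K}) vanishes identically, so $T_{n,s}=0$ for every $s$ and it suffices to analyze $b_n^{-1}M_{n,s}$ for $s=1,\dots,h$ jointly. I would first record that, because $\bar Y_n^2 - \mu_Y^2 = O_P(a_n)$ and $a_n = o(b_n)$ by~(\ref{eq:domination}), the mean-correction terms and the $O_P(1/n)$ remainder in the expansion of $\hat\gamma_n(s)-\gamma(s)$ are all $o_P(b_n/n)$; hence $n b_n^{-1}(\hat\gamma_n(s)-\gamma(s))$ and $b_n^{-1}M_{n,s}$ have the same joint limit, if either exists.

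The core step is to obtain the joint stable limit of $(b_n^{-1}M_{n,1},\dots,b_n^{-1}M_{n,h})$ from the convergence~(\ref{eq:pp-conv}). Write $M_{n,s} = \sum_{i=1}^n \{Y_iY_{i+s} - \mathbb{E}[Y_iY_{i+s}\mid\mathcal F_{i-1}]\}$. The summand $b_n^{-1}Y_iY_{i+s}$ is, up to the predictable recentering, a coordinate of the vector $\mathbf Y_{n,i}$ on which the point process $N_n$ is built; by Proposition~\ref{prop:sv-pp} (resp.\ \ref{prop:egarch-pp-expo} or~\ref{prop:egarch-pp-quadratic}), $N_n$ converges to a superposition of independent Poisson processes living on the distinct axes $\mathbf e_1,\dots,\mathbf e_h$, which is precisely the asymptotic independence needed to make the limiting stable vector have independent components. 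The argument is the standard one for functionals of point processes giving rise to stable limits in the $\alpha\in(1,2)$ regime: use a continuous-mapping / summation functional on $N_n$ restricted to $\{|x|>\epsilon\}$ to get the large jumps, control the small-jump contribution by a second-moment (or $L^1$) bound that is uniform in $\epsilon$, and check that the predictable centering $\mathbb{E}[Y_iY_{i+s}\mid\mathcal F_{i-1}]$ only removes a negligible deterministic-order drift (here it matters that $\alpha>1$, so $Y_0Y_1$ has finite mean and $b_n/n\to\infty$, making the truncated-mean corrections asymptotically matching). One then reads off that the limit is an $\alpha$-stable vector with independent coordinates, the skewness and scale of the $s$-th coordinate being determined by the mean measure $\nu_s$, i.e.\ by $d_+(s),d_-(s)$.

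The main obstacle I anticipate is the small-jump / negligibility estimate together with the handling of the martingale centering simultaneously for all lags $s=1,\dots,h$. Specifically, one must show that for the truncated sums $M_{n,s}^{(\epsilon)} = \sum_{i=1}^n\{Y_iY_{i+s}\mathbf 1_{\{|Y_iY_{i+s}|\le\epsilon b_n\}} - \mathbb{E}[\cdots\mid\mathcal F_{i-1}]\}$ one has $\limsup_n \mathrm{var}(b_n^{-1}M_{n,s}^{(\epsilon)}) \to 0$ as $\epsilon\to0$, which uses the regular variation of $Y_0Y_1$ with index $\alpha<2$ (so the truncated second moment is $O(\epsilon^{2-\alpha}b_n^2/n)$) and the asymptotic independence~(\ref{eq:indep-products}) to kill cross-lag covariances, plus~(\ref{eq:indep1}) and~(\ref{eq:domination}) to discard overlaps of the form $Y_iY_{i+s}$ with $Y_jY_{j+s}$ sharing an index. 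Once this is in place, the convergence of finite-dimensional distributions follows by combining the point-process limit for the large jumps with the vanishing small-jump contribution, and the proof is complete.
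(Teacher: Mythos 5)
Your proposal follows essentially the same route as the paper: the vanishing of $T_{n,s}$ when $\mathbb{E}[Z_0]=0$, negligibility of the mean-correction and $O_P(1/n)$ terms via $a_n=o(b_n)$, and the large-jump/small-jump analysis of $b_n^{-1}M_{n,s}$ driven by the multivariate point process convergence of Propositions~\ref{prop:sv-pp}--\ref{prop:egarch-pp-quadratic}, with the truncated second-moment bound of order $\epsilon^{2-\alpha}b_n^2$ (Karamata) disposing of the small jumps exactly as in the paper's treatment of $M_{n,s}$ in the proofs of Theorems~\ref{thm:partial-sums-egarch} and~\ref{theo:cov-lmsv}. One cosmetic slip: you write $b_n/n\to\infty$ where you mean $n/b_n\to\infty$ (equivalently $b_n=o(n)$, which holds since $b_n$ is regularly varying with index $1/\alpha<1$); this does not affect the argument, whose relevant point is that for $\alpha>1$ the truncated conditional compensators supply the standard stable centering.
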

This result was obtained by \cite{DavisMikosch2001} in the (LM)SV
case for the function $\sigma(x)=\exp(x)$ and under implicit
conditions that rule out long memory.

We continue the discussion under the assumption that $m\ne0$. Then
the term $T_{n,s}$ is the partial sum of a sequence which is a
function of a bivariate Gaussian sequence. It can be treated by
applying the results of \cite{arcones:1994}. Its rate of convergence
and limiting distribution will depend on the Hermite rank of the
function $K$ with respect to the bivariate Gaussian vector
$(X_0,\hat{X}_{0,s})$, which is fully characterized by the covariance
between $X_0$ and $\hat{X}_{0,s}$,
\begin{align*}
  \mathrm{cov}(X_0,\hat{X}_{0,s}) = \varsigma_s^{-1} \sum_{j=1}^\infty c_j c_{j+s} =
  \varsigma_s^{-1} \rho_s \; .
\end{align*}

\subsubsection*{LMSV case}
Since in this context the noise sequence $\{Z_i\}$ and the volatility sequence
$\{\sigma_i\}$ are independent, we compute easily that
\begin{align*}
  K(x,y) = m^2 \sigma(x) \mathbb{E}[\sigma(\varkappa_s \zeta + c_s \eta_0 +
  \varsigma_s y)] - m^2 \mathbb{E}[\sigma(X_0)\sigma(X_s)]\;,
\end{align*}
where $\varkappa_s^2 = \sum_{j=1}^{s-1} c_j^2$ and $\zeta$ is a standard
Gaussian random variable, independent of $\eta_0$. Thus, the Hermite rank of the
function $K$ depends only on the function $\sigma$ (but is not necessarily equal
to the Hermite rank of $\sigma$).

\subsubsection*{Case of leverage}
In that case, the dependence between $\eta_0$ and $Z_0$ comes into play. We now have
\begin{align*}
  K(x,y) = m \sigma(x) \mathbb{E}[\sigma(\varkappa_s \zeta + c_s \eta_0 + \varsigma_sy)Z_0] - m
  \mathbb{E}[\sigma(X_0)\sigma(X_s)Z_0] \; ,
\end{align*}
and now the Hermite rank of $K$ depends also on $Z_0$.  Different situations can
occur. We give two examples.
\begin{example}
  Consider the case $\sigma(x) = \exp(x)$. Then
  \begin{align*}
    \mathbb{E}[ Y_0Y_{s} \mid \mathcal F_{-1} ] & = \mathbb{E}[Z_0 Z_s \exp(X_0)
    \exp(X_s) \mid \mathcal F_{-1}]\\
& = m \mathbb{E}[Z_0 \exp(c_s \eta_0)]
\mathbb{E}\left[\exp\left(\sum_{j=1}^{s-1} c_j
      \eta_{s-j}\right) \right] \exp\left(X_0+\varsigma_s \hat{X}_{0,s}\right)  \; .
  \end{align*}
  Denote $\tilde m = \mathbb{E}[Z_0 \exp(c_s \eta_0)]$ and note that $
  \mathbb{E}\left[\exp\left(\sum_{j=1}^{s-1} c_j \eta_{s-j}\right) \right] = \exp\left(\varkappa_s^2/2\right)$. Thus
  \begin{align*}
    K(x,y) = m \tilde m \, \exp\left(\varkappa_s^2/2\right) \left\{\exp\left(x+\varsigma_sy\right) - \mathbb{E}\left[
    \exp\left(X_0+\varsigma_s \hat{X}_{0,s}\right) \right]\right\} \; .
  \end{align*}
  If $\mathbb{E}[Z_0]=0$ or $\mathbb{E}[Z_0\exp\left(c_s\eta_0\right)]=0$, then the function
  $K$ is identically vanishing and $T_{n,s}=0$.  Otherwise, the Hermite rank of
  $K$ with respect to $(X_0,\hat{X}_{0,s})$ is 1.  Thus, applying
  \cite[Theorem~6]{arcones:1994} (in the one-dimensional case) yields that
  $n^{-1} \rho_n^{-1/2} T_{n,s}$ converges weakly to a zero mean Gaussian
  distribution. The rate of convergence is the same as in the LMSV case but the
  asymptotic variance is different unless $\mathbb{E}[Z_0 \exp(c_s\eta_0)]
  =\mathbb{E}[Z_0] \mathbb{E}[ \exp(c_s\eta_0)]$.
\end{example}

\begin{example}
  Consider $\sigma(x)=x^2$.  Denote $\check{X}_{i,s} = \varkappa_s^{-1}
  \sum_{j=1}^{s-1} c_j \eta_{i+s-j}$.  Then
  \begin{align*}
    \mathbb{E}[ Y_0Y_{s} \mid \mathcal F_{-1} ] & = \mathbb{E}[Z_0 Z_s X_0^2 (\varkappa_s
    \check{X}_{0,s} + \varsigma_s \hat{X}_{0,s} + c_s \eta_0)^2  \mid \mathcal F_{-1}] \\
    & = m X_0^2 \left\{ \varkappa_s^2 m + c_s \mathbb{E}[Z_0\eta_0^2] + \varsigma_s m
      (\hat{X}_{0,s})^2 + 2\varsigma_s c_s \mathbb{E}[Z_0\eta_0] \hat{X}_{0,s} \right\}.
  \end{align*}
Thus
\begin{align*}
  K(x,y) & = \varsigma_s m^2 (x^2y^2 - \mathbb{E}[X_0^2(\hat{X}_{0,s}^2] ) + 2 \varsigma_s c_s
  m\mathbb{E}[Z_0\eta_0] \{x^2y-\mathbb{E}[X_0^2 \hat{X}_{0,s}]\} \\
  &  \hspace*{.5cm}+   (\varkappa_s^2m^2  + c_s m \mathbb{E}[Z_0\eta_0^2]) (x^2-1)
\end{align*}
and it can be verified that the Hermite rank of $K$ with respect to $(X_0,\hat
X_0^{(s)})$ is 1, except if $\mathbb{E}[Z_0\eta_0] = 0$, which holds in the LMSV
case. Thus we see that the rate of convergence of $T_{n,s}$ depends on the
presence or absence of leverage. See Example \ref{xmpl:egarch-different} for
details.
\end{example}
Let us now introduce the notations that will be used to deal with
sample covariances of powers. For $p>0$ define $m_p=\mathbb{E}[|Z_0|^p]$.
If
  $p\in(\alpha,2\alpha)$ and Assumption~(\ref{eq:model-2}) holds, $m_p$ is
  finite and $\mathbb{E}[|Z_0|^{2p}]=\infty$. Moreover, under the assumptions of
  Lemma~\ref{lem:asympt-indep-lmsv} or~\ref{lemma:asymp-indep-EGARCH-expo}, for
  $s>0$, $\mathbb{E}[|Y_0Y_s|^p]<\infty$ and $\mathbb{E}[|Y_0Y_s|^{2p}]=\infty$ for
  $p\in(\alpha/2,\alpha)$.  Thus the autocovariance $\gamma_p(s) =
  \mathrm{cov}(|Y_0|^p,|Y_s|^p)$ is well defined. Furthermore, define $\bar Y_{p,n} =
n^{-1} \sum_{i=1}^n |Y_i|^p$ and
\begin{align*}
  \hat \gamma_{p,n}(s) = \frac 1n \sum_{i=1}^{ n} (|Y_i|^p - \bar Y_{p,n})
  (|Y_{i+s}|^p - \bar Y_{p,n}) \; .
\end{align*}
Define the functions $K_{p,s}^*$ (LMSV case) and $K_{p,s}^\dag$ (case with
leverage) by
\begin{align}
  K_{p,s}^*(x,y) & = m_p^2 \sigma^p(x) \mathbb{E}[\sigma^p(\varkappa_s \zeta + c_s
  \eta_0 + \varsigma_s y)] -
  m_p^2  \mathbb{E}[\sigma^p(X_0)\sigma^p(X_s)] \; ,  \label{eq:def-K-lmsv} \\
  K_{p,s}^\dag(x,y) & = m_p \sigma^p(x) \mathbb{E}[\sigma^p(\varkappa_s \zeta + c_s
  \eta_0 + \varsigma_sy)|Z_0|^p] - m_p \mathbb{E}[\sigma^p(X_0)\sigma^p(X_s)|Z_0|^p]
  \; .  \label{eq:def-K-egarch}
\end{align}

\subsection{Convergence of the sample covariance of powers: LMSV case}

\begin{thm}
  \label{theo:cov-lmsv}
  Let Assumption~\ref{hypo:iid-bivarie} hold and assume that the sequences
  $\{\eta_i\}$ and $\{Z_i\}$ are independent. Let the function $\sigma$ be
  continuous and satisfy~(\ref{eq:sigma-assumption}) with $q>4\alpha$.  For a
  fixed  integer  $s\geq1$, let $\tau_p^*(s)$ be the Hermite rank of the
  bivariate function $K_{p,s}^*$ defined
  by~(\ref{eq:def-K-lmsv}), with respect to a bivariate Gaussian vector with
  standard marginal distributions and correlation $\varsigma_s^{-1}\gamma_s$.
  \begin{itemize}
  \item If $p<\alpha<2p$ and $1-\tau_p^*(s)(1-H)<p/\alpha$, then
    \begin{align*}
      n b_n^{-p} \hat\gamma_{p,n}(s)-\gamma_p(s) \stackrel{\scriptstyle d}{\to} \mathcal L_s \; ,
    \end{align*}
    where $\mathcal L_s$ is a $\alpha/p$-stable random variables.
  \item If $p<\alpha<2p$ and $1-\tau_p^*(s)(1-H)>p/\alpha$, then
    \begin{align*}
      \rho_n^{-\tau^*_p(s)/2}
      (\hat\gamma_{p,n}(s)-\gamma_p(s))
      \stackrel{\scriptstyle d}{\to} G_s^* \; ,
    \end{align*}
    where the random variable $G_s^*$ is Gaussian if $\tau_p^*(s)=1$.
  \end{itemize}
\end{thm}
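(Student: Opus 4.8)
The plan is to build on the decomposition prepared in the text. Set $U_i = |Y_i|^p$, $m_p = \mathbb{E}[|Z_0|^p]$ and $\mu_p = \mathbb{E}[U_0] = m_p\mathbb{E}[\sigma^p(X_0)]$. Expanding the square in $\hat\gamma_{p,n}(s)$ and using $\gamma_p(s) = \mathbb{E}[U_0U_s] - \mu_p^2$ gives
\[
  \hat\gamma_{p,n}(s) - \gamma_p(s) = \left( C_{p,n}(s) - \mathbb{E}[U_0U_s] \right) - \left( \bar Y_{p,n}^2 - \mu_p^2 \right) + R_n \; ,
\]
where $R_n$ is a boundary term of order $O_P(a_n^p/n)$. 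By Theorem~\ref{thm:partial-sums-egarch} applied to $\sigma^p$, which has Hermite rank $\tau_p$, one has $\bar Y_{p,n} - \mu_p = O_P(a_n^p/n)$ when $1 - \tau_p(1-H) < p/\alpha$ and $O_P(\rho_n^{\tau_p/2})$ otherwise, with the martingale component of $\bar Y_{p,n}-\mu_p$ always $O_P(a_n^p/n)$; hence, up to terms negligible at both announced rates (using $a_n = o(b_n)$ and $\rho_n^{\tau_p/2}\to 0$), the centering correction reduces to $-2m_p\mu_p\, n^{-1}\sum_{i=1}^n\{\sigma^p(X_i) - \mathbb{E}[\sigma^p(X_0)]\}$. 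Together with $C_{p,n}(s) - \mathbb{E}[U_0U_s] = n^{-1}M_{p,n,s} + n^{-1}T_{p,n,s}$, where $M_{p,n,s} = \sum_{i=1}^n\{U_iU_{i+s} - \mathbb{E}[U_iU_{i+s}\mid\mathcal F_{i-1}]\}$ is a martingale sum and $T_{p,n,s} = \sum_{i=1}^n K_{p,s}^*(X_i,\hat X_{i,s})$, the problem reduces to the asymptotics of the stable term $n^{-1}M_{p,n,s}$ and of the function-of-Gaussian term $n^{-1}T_{p,n,s} - 2m_p\mu_p\, n^{-1}\sum_{i=1}^n\{\sigma^p(X_i) - \mathbb{E}[\sigma^p(X_0)]\}$.

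For the martingale term I would show $b_n^{-p}M_{p,n,s}\stackrel{\scriptstyle d}{\to}\mathcal L_s$, an $\alpha/p$-stable law. Since $\mathbb{E}[U_iU_{i+s}\mid\mathcal F_{i-1}] = m_p^2\sigma^p(X_i)\mathbb{E}[\sigma^p(\varkappa_s\zeta + c_s\eta_0 + \varsigma_s\hat X_{i,s})]$ has finite moments of orders up to $q/(2p) > 2 > \alpha/p$ under~(\ref{eq:sigma-assumption}) with $q>4\alpha$, subtracting the conditional mean leaves the heavy tail unchanged, so it is enough to treat $b_n^{-p}\sum_{i=1}^n\{U_iU_{i+s} - \mathbb{E}[U_0U_s]\}$. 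Here I would run the argument of \cite{DavisMikosch2001} from Proposition~\ref{prop:sv-pp}: projecting $N_n$ onto its time- and $s$-th coordinate, $\sum_{i=1}^n\delta_{(i/n,\,b_n^{-1}Y_iY_{i+s})}$ converges to a Poisson process with mean $\mathrm dt\,\lambda_s(\mathrm dx)$; pushing forward by $x\mapsto|x|^p$ gives a Poisson process with a one-sided index-$\alpha/p$ mean measure; and since $\alpha/p\in(1,2)$ the summation functional, centred at $\mathbb{E}[U_0U_s]$, yields the stable limit. Each increment $U_iU_{i+s} - \mathbb{E}[U_iU_{i+s}\mid\mathcal F_{i-1}]$ is $\mathcal F_{i+s}$-measurable with zero $\mathcal F_{i-1}$-conditional mean, hence uncorrelated with increments more than $s$ steps away, so the truncated second moment splits into a diagonal part behaving as in the i.i.d.\ case and a finite-range part controlled by the anticlustering relations~(\ref{eq:indep1})--(\ref{eq:indep-products}) of Lemma~\ref{lem:asympt-indep-lmsv} together with the moment bounds from~(\ref{eq:sigma-assumption}). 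I expect the uniform negligibility of this small-jump contribution (as $n\to\infty$ and then the truncation level tends to $0$), including the bookkeeping of the shared factor $Y_{i+1}$ when $s=1$, to be the main technical obstacle.

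For the function-of-Gaussian term I would invoke Arcones' multivariate non-central limit theorem~\cite{arcones:1994}. The pair $(X_i,\hat X_{i,s})$ is a stationary bivariate Gaussian sequence with standard marginals, cross-correlation $\varsigma_s^{-1}\rho_s$, and auto- and cross-covariances all asymptotically proportional to $\rho_n$, and $K_{p,s}^*$ has Hermite rank $\tau_p^*(s)$ with respect to it. When $1 - \tau_p^*(s)(1-H) > p/\alpha$ one has $\tau_p^*(s)(1-H) < 1/2$, so $\{\rho_n^{\tau_p^*(s)}\}$ is non-summable, and \cite{arcones:1994} gives, after normalisation by $n\rho_n^{\tau_p^*(s)/2}$, joint convergence of $T_{p,n,s}$ and of $\sum_{i=1}^n\{\sigma^p(X_i) - \mathbb{E}[\sigma^p(X_0)]\}$ to limits lying in the $\tau_p^*(s)$-th Wiener chaos and generated by a common Hermite integral; their prescribed linear combination is $G_s^*$, which is Gaussian when $\tau_p^*(s) = 1$. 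It then remains to compare rates: with $b_n^p/n\asymp n^{p/\alpha-1}$ and $\rho_n^{\tau_p^*(s)/2}\asymp n^{-\tau_p^*(s)(1-H)}$ up to slowly varying factors, the condition $1 - \tau_p^*(s)(1-H) < p/\alpha$ makes the function-of-Gaussian term $o_P(b_n^p/n)$, so only the stable part survives and $nb_n^{-p}(\hat\gamma_{p,n}(s)-\gamma_p(s))\stackrel{\scriptstyle d}{\to}\mathcal L_s$; the condition $1 - \tau_p^*(s)(1-H) > p/\alpha$ makes $n^{-1}M_{p,n,s} = O_P(b_n^p/n)$ equal to $o_P(\rho_n^{\tau_p^*(s)/2})$, so only the function-of-Gaussian part survives and $\rho_n^{-\tau_p^*(s)/2}(\hat\gamma_{p,n}(s)-\gamma_p(s))\stackrel{\scriptstyle d}{\to}G_s^*$, the remainders from the first paragraph being of strictly smaller order in both cases.
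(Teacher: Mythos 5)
Your overall architecture coincides with the paper's: the decomposition of $C_{p,n}(s)-\mathbb{E}[|Y_0Y_s|^p]$ into the martingale part $M_{p,n,s}$ and the Gaussian-functional part $T_{p,n,s}=\sum_{i=1}^nK^*_{p,s}(X_i,\hat{X}_{i,s})$, the point-process and summation-functional argument for the stable part, Arcones' theorem for $T_{p,n,s}$, and the comparison of the rates $b_n^p/n$ and $\rho_n^{\tau_p^*(s)/2}$ to obtain the dichotomy. Your device for the small jumps (the increments are $\mathcal F_{i+s}$-measurable with vanishing $\mathcal F_{i-1}$-conditional mean, hence uncorrelated beyond lag $s$) is an acceptable substitute for the paper's splitting of the truncated sum into $s+1$ genuine martingales followed by Doob's inequality, and your explicit tracking of the $\bar Y_{p,n}^2-\mu_p^2$ correction is more than the paper's proof records.

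There is, however, one genuine gap: the reduction ``subtracting the conditional mean leaves the heavy tail unchanged, so it is enough to treat $b_n^{-p}\sum_{i}\{U_iU_{i+s}-\mathbb{E}[U_0U_s]\}$.'' The difference between $M_{p,n,s}$ and $\sum_{i}\{U_iU_{i+s}-\mathbb{E}[U_0U_s]\}$ is exactly $-T_{p,n,s}$, and the finiteness of moments of $\mathbb{E}[U_iU_{i+s}\mid\mathcal F_{i-1}]$ controls only the marginal tail of the summands, not the size of the partial sum of these long-range dependent conditional means: in the regime $1-\tau_p^*(s)(1-H)>p/\alpha$ one has $T_{p,n,s}\asymp n\rho_n^{\tau_p^*(s)/2}\gg b_n^p$, so the object you reduce to does not converge after normalization by $b_n^{p}$, and the reduction presupposes precisely what fails in the second bullet. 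Since your final rate comparison needs $M_{p,n,s}=O_P(b_n^p)$ in both regimes, you must establish $b_n^{-p}M_{p,n,s}\stackrel{\scriptstyle d}{\to}\mathcal L_s$ while keeping the conditional centering throughout. The paper does this by leaving $\mathbb{E}[\,\cdot\,\mathbf 1_{\{|Y_iY_{i+s}|>\epsilon b_n\}}\mid\mathcal F_{i-1}]$ inside the big-jump block and showing that it may be replaced by the unconditional mean at cost $o_P(b_n^p)$: by Karamata's theorem and Potter's bound (using the independence of $\{Z_i\}$ and $\{\eta_i\}$ and~(\ref{eq:sigma-assumption}) with $q>4\alpha$) the conditional expectation of the \emph{truncated} product is $O(n^{-1}b_n^p)$ in $L^2$, and the Gaussian variance inequality~(\ref{eq:variance-inequality-lrd}) then bounds the variance of its partial sum by $Cn^2\rho_n\cdot n^{-2}b_n^{2p}=o(b_n^{2p})$ --- the analogue of~(\ref{eq:proof-1d-egarch}). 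It is the truncation at level $\epsilon b_n$, not the moment bound you invoke, that makes this compensator difference negligible; the untruncated compensator difference is $T_{p,n,s}$ and must be carried, in full, to the Arcones term.
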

For different values $s=1,\ldots,h$, the Hermite ranks $\tau_p^*(s)$ of the
functions $K_{p,s}^*$ may be different.  Therefore, in order to consider the joint
autovovariances at lags $s=1,\dots,h$, we define
$$
\tau_p^*=\min\{\tau_p^*(1),\ldots,\tau_p^*(h)\} \; .
$$
\addtocounter{cor}{+1}
\begin{cor}\label{cor:1}
  Under the assumptions of Theorem~\ref{theo:cov-lmsv},
\begin{itemize}
  \item If $1-\tau_p^*(1-H)<p/\alpha$, then
    \begin{align*}
      n b_n^{-p} (\hat\gamma_{p,n}(1)-\gamma_p(1),\dots,\hat\gamma_{p,n}(h)
      -\gamma_p(h)) \stackrel{\scriptstyle d}{\to} (\mathcal L_1,\dots,\mathcal L_h) \; ,
    \end{align*}
    where $\mathcal L_1,\dots,\mathcal L_p$ are independent $\alpha/p$-stable random variables.
  \item If $1-\tau_p^*(1-H)>p/\alpha$, then
    \begin{align*}
      \rho_n^{-\tau^*_p/2}
      (\hat\gamma_{p,n}(1)-\gamma_p(1),\dots,\hat\gamma_{p,n}(h) -\gamma_p(h))
      \stackrel{\scriptstyle d}{\to} (\tilde G_1^*,\dots,\tilde G_h^*) \; ,
    \end{align*}
    where $\tilde G_s^*=G_s^*$ if $\tau_p^*(s)=\tau_p^*$ and $\tilde
      G_s^*=0$ otherwise.
  \end{itemize}

\end{cor}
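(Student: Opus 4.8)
The plan is to obtain the joint statement by combining the marginal convergences in Theorem~\ref{theo:cov-lmsv} with the usual Cramér--Wold device, treating the two cases separately according to which regime each lag falls into. Throughout, write $\hat\gamma_{p,n}(s)-\gamma_p(s) = nb_n^{-p}$-normalization or $\rho_n^{-\tau/2}$-normalization depending on the regime; the key observation is that under the blanket hypothesis $1-\tau_p^*(1-H)<p/\alpha$ (resp.\ $>p/\alpha$) and the definition $\tau_p^*=\min_s\tau_p^*(s)$, \emph{every} lag $s\in\{1,\dots,h\}$ satisfies $1-\tau_p^*(s)(1-H)<p/\alpha$ (resp.\ the reverse), since $\tau_p^*(s)\ge\tau_p^*$ makes $1-\tau_p^*(s)(1-H)\le 1-\tau_p^*(1-H)$. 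So in the first case all lags are in the stable regime, and in the second case all lags are in the Hermite--Rosenblatt regime, and one common normalization works.

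In the stable case, I would first recall from the decomposition $C_n(s)-\mathbb{E}[Y_0Y_s] = n^{-1}M_{n,s}+n^{-1}T_{n,s}$ (applied to the $p$-th powers, with $K$ replaced by $K_{p,s}^*$) that the contribution of $T_{n,s}$ is negligible: by Arcones' theorem the fluctuations of $n^{-1}T_{n,s}$ are of order $\rho_n^{\tau_p^*(s)/2}=n^{\tau_p^*(s)(H-1)}\ell(n)$, which in this regime is $o(b_n/n)$ because $1-\tau_p^*(s)(1-H)<p/\alpha$ forces $n^{1-\tau_p^*(s)(1-H)}=o(n^{p/\alpha})=o(b_n)$. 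Hence $nb_n^{-p}(\hat\gamma_{p,n}(s)-\gamma_p(s))$ has the same limit as $b_n^{-p}M_{p,n,s}$, and the joint convergence of the vector $(b_n^{-p}M_{p,n,1},\dots,b_n^{-p}M_{p,n,h})$ to independent $\alpha/p$-stable laws follows from the point process convergence of Proposition~\ref{prop:sv-pp}: the limiting point process is a superposition of \emph{independent} Poisson processes with mean measures $\nu_0,\nu_1,\dots,\nu_h$, the components $\mathbf e_1,\dots,\mathbf e_h$ are orthogonal, and the asymptotic independence estimates \eqref{eq:indep-products} guarantee that no single large $Y_iY_{i+s}$ affects two different lags simultaneously; summing each coordinate via the usual continuous-mapping/summation functional applied to the point process yields the jointly independent stable limits. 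The term $\mu_Y^2-\bar Y_n^2 = O_P(a_n)$ and $a_n=o(b_n)$ by \eqref{eq:domination}, so it does not contribute.

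In the Hermite--Rosenblatt case the roles are reversed: now $b_n^{-p}M_{p,n,s}=O_P(b_n^{-p}\cdot b_n)=O_P(b_n^{1-p})$ — more precisely $M_{p,n,s}$ normalized by $b_n$ is $O_P(1)$, so $\rho_n^{-\tau_p^*/2}n^{-1}M_{p,n,s}=O_P(\rho_n^{-\tau_p^*/2}b_n/n)=O_P(n^{p/\alpha-1+\tau_p^*(1-H)})=o_P(1)$ since $1-\tau_p^*(1-H)>p/\alpha$. Thus $\rho_n^{-\tau_p^*/2}(\hat\gamma_{p,n}(s)-\gamma_p(s))$ is asymptotically equivalent to $\rho_n^{-\tau_p^*/2}n^{-1}T_{p,n,s}$, and the joint convergence of $(n^{-1}T_{p,n,1},\dots,n^{-1}T_{p,n,h})$, suitably normalized, is exactly a multivariate non-central limit theorem for a vector of functions of the common underlying Gaussian process, which is covered by the multivariate version of Arcones' results recalled in Appendix~\ref{sec:LRD-Gaussian}. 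The function $K_{p,s}^*$ has Hermite rank $\tau_p^*(s)$; only those lags with $\tau_p^*(s)=\tau_p^*$ have fluctuations of the exact order $\rho_n^{\tau_p^*/2}$, while for lags with $\tau_p^*(s)>\tau_p^*$ the fluctuations are of strictly smaller order and the corresponding normalized component converges to $0$ — this is precisely the statement $\tilde G_s^*=0$ in that case. The nondegenerate components converge jointly (with the same Hermite process driving them, through their common lowest-order Hermite coefficient) to the Gaussian vector $(\tilde G_1^*,\dots,\tilde G_h^*)$, Gaussian because $\tau_p^*=1$ would then force each to be a linear functional of the same fractional Brownian motion; for $\tau_p^*\ge2$ one gets a Rosenblatt-type vector and the statement is that $G_s^*$ is Gaussian only when $\tau_p^*(s)=1$.

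The main obstacle is the bookkeeping at the boundary between the two regimes across different lags and verifying that the cross terms between $M_{p,n,s}$ and $T_{p,n,s'}$ (for possibly $s\ne s'$) are negligible in the joint vector: one must check that the martingale part at one lag and the Gaussian-subordinated part at another lag are asymptotically independent (or that one of the two is simply negligible, as the order-of-magnitude computations above show). Granting the marginal results of Theorem~\ref{theo:cov-lmsv} and the joint point-process convergence of Proposition~\ref{prop:sv-pp}, these negligibility estimates are the only real work, and they follow from the rate comparisons $a_n=o(b_n)$, $\rho_n^{\tau_p^*/2}n = o(b_n)$ in regime one, and $b_n=o(\rho_n^{\tau_p^*/2}n)$ in regime two, together with \eqref{eq:indep1}--\eqref{eq:indep-products}.
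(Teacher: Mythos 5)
Your proposal is correct and follows essentially the route the paper takes (the paper states Corollary~\ref{cor:1} without a separate proof; it is obtained from the proof of Theorem~\ref{theo:cov-lmsv} exactly as you describe): decompose each lag into a martingale part and a Gaussian-subordinated part, get joint, independent $\alpha/p$-stable limits for the martingale parts from the multivariate point process convergence of Proposition~\ref{prop:sv-pp} (the limiting point process being supported on the coordinate axes), and get the joint non-central limit for the $T$-parts from the multivariate Arcones theorem recalled in Appendix~\ref{sec:LRD-Gaussian}, with the cross-terms killed by the rate comparisons you list.

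One logical slip is worth fixing. In your ``key observation'' you assert that under $1-\tau_p^*(1-H)>p/\alpha$ \emph{every} lag satisfies $1-\tau_p^*(s)(1-H)>p/\alpha$, citing $1-\tau_p^*(s)(1-H)\le 1-\tau_p^*(1-H)$; that inequality points the wrong way, and a lag with $\tau_p^*(s)>\tau_p^*$ may perfectly well fall into its own stable regime. This does not damage the proof, because your subsequent order-of-magnitude computations cover both sub-cases: under the common normalization $\rho_n^{-\tau_p^*/2}$ the martingale part at every lag is $O_P(n^{p/\alpha-1+\tau_p^*(1-H)})=o_P(1)$, while the normalized $T$-part at a lag with $\tau_p^*(s)>\tau_p^*$ is $O_P(\rho_n^{(\tau_p^*(s)-\tau_p^*)/2})$ if $2\tau_p^*(s)(1-H)<1$ and $O_P(n^{\tau_p^*(1-H)-1/2})$ otherwise, both of which vanish (the latter because $1-\tau_p^*(1-H)>p/\alpha>1/2$ forces $\tau_p^*(1-H)<1/2$). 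Hence the components with non-minimal Hermite rank tend to zero whichever marginal regime they sit in, which is exactly the statement $\tilde G_s^*=0$. Two cosmetic points: in the stable regime the comparison should read $\rho_n^{\tau_p^*(s)/2}=o(b_n^p/n)$ (with $b_n^p$, not $b_n$), and there too one should also allow the sub-case $2\tau_p^*(s)(1-H)>1$, where $n^{-1}T_{n,s}=O_P(n^{-1/2})$; this is again $o(b_n^p/n)$ since $p/\alpha>1/2$.
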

We see that the joint limiting vector $(\tilde G_1^*,\dots,\tilde
G_h^*)$
  may have certain zero components if there exist indices~$s$ such that
  $\tau_p^*(s)>\tau_p^*$. However, for standard choices of the function
  $\sigma$, the Hermite rank~$\tau_p^*(s)$ does not depend on~$s$. For instance,
  for $\sigma(x)=\exp(x)$, $\tau_p^*(s)=1$ for all $s$, and for $\sigma(x)=x^2$,
  $\tau_p^*(s)=2$ for all $s$.
\subsection{Convergence of sample covariance of powers:  case of leverage}

\begin{thm}
  \label{theo:cov-egarch}
  Let the assumptions of Proposition~\ref{prop:egarch-pp-expo} or
  \ref{prop:egarch-pp-quadratic} hold and assume
  that~(\ref{eq:sigma-assumption}) holds for some $q>4\alpha$. Let
  $\tau_p^\dag(s)$ be the Hermite rank of the bivariate function
  $K_{p,s}^\dag$ defined by~(\ref{eq:def-K-egarch}), with respect to a
  bivariate Gaussian vector with standard marginal distributions and correlation
  $\varsigma_s^{-1}\gamma_s$.
  \begin{itemize}
  \item If $p<\alpha<2p$ and $1-\tau_p^\dag(s)(1-H)<p/\alpha$, then
    \begin{align*}
      n b_n^{-p} (\hat\gamma_{p,n}(1)-\gamma_p(1),\dots,\hat\gamma_{p,n}(h)
      -\gamma_p(h)) \stackrel{\scriptstyle d}{\to} \mathcal L_s \; ,
    \end{align*}
    where $\mathcal L_s$ is a $\alpha/p$-stable random variable.
  \item If $p<\alpha<2p$ and $1-\tau_p^\dag(s)(1-H)>p/\alpha$, then
    \begin{align*}
      \rho_n^{-\tau^\dag_p(s)/2}
      (\hat\gamma_{p,n}(1)-\gamma_p(1),\dots,\hat\gamma_{p,n}(h) -\gamma_p(h))
      \stackrel{\scriptstyle d}{\to} G_s^\dag \; ,
    \end{align*}
    where the random vector $G_s^\dag$ is Gaussian if $\tau_p^\dag(s)=1$.
  \end{itemize}
\end{thm}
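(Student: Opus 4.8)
I would reproduce the proof of Theorem~\ref{theo:cov-lmsv} almost verbatim, the only genuinely new point being the form of the one-step conditional expectation of the products, which now encodes the dependence between $Z_0$ and $\eta_0$. Set $C_{p,n}(s)=n^{-1}\sum_{i=1}^n|Y_i|^p|Y_{i+s}|^p$ and $\mu_p=\mathbb{E}[|Y_0|^p]$. Expanding $\hat\gamma_{p,n}(s)$ and replacing $n^{-1}\sum_{i=1}^n|Y_{i+s}|^p$ by $\bar Y_{p,n}$ up to a boundary error of order $1/n$ gives $\hat\gamma_{p,n}(s)-\gamma_p(s)=\{C_{p,n}(s)-\mathbb{E}[|Y_0Y_s|^p]\}-\{\bar Y_{p,n}^2-\mu_p^2\}+O_P(1/n)$. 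Since $X_i$ and $\hat X_{i,s}$ are $\mathcal F_{i-1}$-measurable while $Z_i$, $Z_{i+s}$ and $\eta_i,\dots,\eta_{i+s-1}$ are independent of $\mathcal F_{i-1}$ (and $Z_i$ is paired with the innovation $\eta_i$ that enters $X_{i+s}$ with coefficient $c_s$), conditioning on $\mathcal F_{i-1}$ yields $\mathbb{E}[|Y_i|^p|Y_{i+s}|^p\mid\mathcal F_{i-1}]-\mathbb{E}[|Y_0Y_s|^p]=K_{p,s}^\dag(X_i,\hat X_{i,s})$, with $K_{p,s}^\dag$ as in~\eqref{eq:def-K-egarch}; the factor $|Z_0|^p$ inside that expectation is exactly what replaces the constant $m_p$ in the LMSV kernel $K_{p,s}^*$. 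Hence $C_{p,n}(s)-\mathbb{E}[|Y_0Y_s|^p]=n^{-1}M_{p,n,s}+n^{-1}T_{p,n,s}$, with $M_{p,n,s}=\sum_{i=1}^n\{|Y_i|^p|Y_{i+s}|^p-\mathbb{E}[|Y_i|^p|Y_{i+s}|^p\mid\mathcal F_{i-1}]\}$ ($\mathcal F_{i-1}$-conditionally centered summands) and $T_{p,n,s}=\sum_{i=1}^nK_{p,s}^\dag(X_i,\hat X_{i,s})$, a partial sum of a function of the stationary bivariate Gaussian sequence $\{(X_i,\hat X_{i,s})\}$.

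\textbf{The stable (martingale) part.} Leverage changes nothing in substance here: by Lemmas~\ref{lemma:asymp-indep-EGARCH-expo}--\ref{lemma:asymp-indep-EGARCH-quadratic} the products $Y_iY_{i+s}$ have regularly varying balanced tails of index $\alpha$ with the same normalisation $b_n$ as without leverage, and by Propositions~\ref{prop:egarch-pp-expo}--\ref{prop:egarch-pp-quadratic} the point process $N_n$ has the same Poisson limit, with independent components along the different axes. From this, the asymptotic independence of distinct products' exceedances \eqref{eq:indep1}--\eqref{eq:indep-products}, and a two-sided truncation---negligibility of small jumps in the infinite-variance domain of attraction, and negligibility of the conditional-mean corrections, the latter via second-moment bounds requiring $\mathbb{E}[\sigma^{4p}(X_0)]<\infty$, i.e.\ the hypothesis $q>4\alpha$ of~\eqref{eq:sigma-assumption}---one obtains $b_n^{-p}(M_{p,n,1},\dots,M_{p,n,h})\stackrel{\scriptstyle d}{\to}(\mathcal L_1,\dots,\mathcal L_h)$, a vector of independent $\alpha/p$-stable laws ($\alpha/p\in(1,2)$, so $\mathbb{E}[|Y_0Y_s|^p]<\infty$ is the correct centering). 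This step is identical to the LMSV case once the point-process input is in place.

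\textbf{The Gaussian-subordinated part.} The tail-equivalence/moment assumptions of Lemmas~\ref{lemma:asymp-indep-EGARCH-expo}--\ref{lemma:asymp-indep-EGARCH-quadratic} make $y\mapsto\mathbb{E}[\sigma^p(\varkappa_s\zeta+c_s\eta_0+\varsigma_sy)|Z_0|^p]$ finite with at most polynomial (exponential, if $\sigma=\exp$) growth, so that \eqref{eq:sigma-assumption} with $q>4\alpha$ gives $\mathbb{E}[K_{p,s}^\dag(X_0,\hat X_{0,s})^2]<\infty$; moreover all lagged cross-covariances of $\{(X_i,\hat X_{i,s})\}$ inherit the behaviour $\rho_n\sim n^{2H-2}\ell(n)$ from \eqref{eq:model-1}. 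With $\tau:=\tau_p^\dag(s)$, I would apply the (non-)central limit theorems for functionals of vector long-memory Gaussian sequences of \cite{arcones:1994}: when $1-\tau(1-H)>p/\alpha$ (so $\tau(1-H)<1/2$), $(n\rho_n^{\tau/2})^{-1}T_{p,n,s}$ converges to a constant multiple of the order-$\tau$ Hermite random variable, which is Gaussian exactly when $\tau=1$; when $1-\tau(1-H)<p/\alpha$, the crude bound $T_{p,n,s}=O_P(\sqrt n\vee n\rho_n^{\tau/2})=o_P(b_n^p)$ suffices.

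\textbf{Assembling; the main obstacle.} Since $b_n^p$ is regularly varying of index $p/\alpha$ and $n\rho_n^{\tau/2}$ of index $1-\tau(1-H)$, the two regimes separate. If $1-\tau(1-H)<p/\alpha$, then $b_n^{-p}T_{p,n,s}\to0$; moreover $\tau_p^\dag(s)\le\tau(\sigma^p)$ (this follows, as in the LMSV case, from the additive structure of $K_{p,s}^\dag$), so $\sigma^p$ lies in the stable regime of Theorem~\ref{thm:partial-sums-egarch} and $\bar Y_{p,n}^2-\mu_p^2=O_P(a_n^p/n)=o_P(b_n^p/n)$ by \eqref{eq:domination}; hence $nb_n^{-p}(\hat\gamma_{p,n}(1)-\gamma_p(1),\dots,\hat\gamma_{p,n}(h)-\gamma_p(h))\stackrel{\scriptstyle d}{\to}(\mathcal L_1,\dots,\mathcal L_h)$. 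If $1-\tau(1-H)>p/\alpha$, then $M_{p,n,s}=O_P(b_n^p)=o_P(n\rho_n^{\tau/2})$, while the Gaussian part of $\bar Y_{p,n}^2-\mu_p^2$, a functional of $\{X_i\}$ of Hermite rank $\tau(\sigma^p)\ge\tau$ and so of order $\rho_n^{\tau(\sigma^p)/2}$, is either negligible against $\rho_n^{\tau/2}$ or---when $\tau(\sigma^p)=\tau$---of the same order, in which case, being driven by the same long-memory noise, it merges with the limit of $n^{-1}T_{p,n,s}$ into the single limit $G_s^\dag$, still Gaussian when $\tau=1$ (exactly as in the LMSV proof); this gives $\rho_n^{-\tau/2}(\hat\gamma_{p,n}(1)-\gamma_p(1),\dots)\stackrel{\scriptstyle d}{\to}G_s^\dag$. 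The joint statement over $s=1,\dots,h$ follows from the joint point-process limit in the first regime and from the multivariate form of \cite{arcones:1994} in the second. The main obstacle is the martingale/point-process step: making the truncation and the control of the conditional-mean corrections uniform enough to pass to the joint stable limit despite the overlap of consecutive products $Y_iY_{i+1},Y_{i+1}Y_{i+2},\dots$; this is technical but parallels the LMSV proof, the inputs it rests on having been arranged to coincide with their LMSV analogues---leverage enters only through the deterministic kernel $K_{p,s}^\dag$, which is precisely what makes the finite-variance limit (its rate $\rho_n^{\tau_p^\dag(s)/2}$ and its law) differ from the LMSV case.
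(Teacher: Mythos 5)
Your architecture is the paper's own: the decomposition $\hat\gamma_{p,n}(s)-\gamma_p(s)$ into a conditionally centred part $n^{-1}M_{p,n,s}$ and a Gaussian-subordinated part $n^{-1}T_{p,n,s}$ driven by the kernel $K_{p,s}^\dag$ of (\ref{eq:def-K-egarch}); the small-jump/big-jump truncation with the interlacing into $s+1$ genuine martingales so that Doob's inequality applies; Arcones' theorems for $T_{p,n,s}$; and the comparison of the regularly varying rates $b_n^p$ and $n\rho_n^{\tau/2}$ to separate the two regimes. All of that matches the proofs of Theorems~\ref{thm:partial-sums-egarch} and~\ref{theo:cov-lmsv}, which the paper explicitly reuses.

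There is, however, a genuine gap exactly at the step you compress into ``second-moment bounds requiring $\mathbb{E}[\sigma^{4p}(X_0)]<\infty$'' and at your closing claim that leverage enters only through the deterministic kernel $K_{p,s}^\dag$. The entire technical content of the paper's proof of this theorem is the bound $\mathbb{E}[G^2(X_0,\hat{X}_{0,s})]=O(n^{-2}b_n^{2p})$ for the truncated conditional expectation
\begin{align*}
  G(x,y)=\sigma^p(x)\,\mathbb{E}\left[|Z_0Z_s|^p\,\sigma^p(c_s\eta_0+\varsigma_s\zeta+y)\,
  \mathbf 1_{\{|Z_0Z_s|\,\sigma(c_s\eta_0+\varsigma_s\zeta+y)>\epsilon b_n\}}\right] ,
\end{align*}
which is what makes the analogue of (\ref{eq:proof-1d-egarch}) hold, i.e.\ controls the quantity (\ref{eq:variance-a-borner}). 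In the LMSV case one conditions on the volatility and applies Karamata's theorem and Potter's bound to $\bar F_Z(\epsilon b_n/\sigma)$, because the $Z$'s are independent of the volatility; under leverage the variable $|Z_0Z_s|\,\sigma(c_s\eta_0+\varsigma_s\zeta+y)$ is no longer a product of an independent scale with a regularly varying factor, so no moment condition on $\sigma$ alone delivers the bound. The paper has to invoke the tail-equivalence hypotheses of Lemmas~\ref{lemma:asymp-indep-EGARCH-expo} and~\ref{lemma:asymp-indep-EGARCH-quadratic}: for $\sigma(x)=\exp(x)$ it factors out $\exp(p(x+y))$ and uses that $Z_0Z_s\exp(c_s\eta_0)\exp(\varsigma_s\zeta)$ is tail equivalent to $Z_0Z_s$; for subadditive $\sigma$ it splits $G\le I_1+I_2+I_3+I_4$ and treats each term with Karamata, Potter and either tail equivalence or the alternative moment hypothesis. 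This is precisely where leverage changes the proof, and your proposal neither identifies nor supplies it. A secondary unsupported point: the inequality $\tau_p^\dag(s)\le\tau(\sigma^p)$, which you use to dispose of $\bar Y_{p,n}^2-\mu_p^2$, is asserted from the ``additive structure'' of $K_{p,s}^\dag$ but not proved, and for a correlated pair $(X_0,\hat{X}_{0,s})$ the Hermite rank of a product $\sigma^p(x)g(y)$ does not obviously compare to that of $\sigma^p$.
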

Again, as in the previous case, in order to formulate the multivariate result,
we define further
$$
\tau_p^\dag=\min\{\tau_p^\dag(1),\ldots,\tau_p^\dag(h)\} \; .
$$

\begin{cor}\label{cor:2}
  Under the assumptions of Theorem~\ref{theo:cov-egarch},
\begin{itemize}
  \item If $1-\tau_p^\dag(1-H)<p/\alpha$, then
    \begin{align*}
      n b_n^{-p} (\hat\gamma_{p,n}(1)-\gamma_p(1),\dots,\hat\gamma_{p,n}(h)
      -\gamma_p(h)) \stackrel{\scriptstyle d}{\to} (\mathcal L_1,\dots,\mathcal L_h) \; ,
    \end{align*}
    where $\mathcal L_1,\dots,\mathcal L_p$ are independent $\alpha/p$-stable random variables.
  \item If $1-\tau_p^\dag(1-H)>p/\alpha$, then
    \begin{align*}
      \rho_n^{-\tau^\dag_p/2}
      (\hat\gamma_{p,n}(1)-\gamma_p(1),\dots,\hat\gamma_{p,n}(h) -\gamma_p(h))
      \stackrel{\scriptstyle d}{\to} (\tilde G_1^\dag,\dots,\tilde G_h^\dag) \; ,
    \end{align*}
    where $\tilde G_s^\dag=G_s^\dag$ if $\tau_p^\dag(s)=\tau_p^\dag$ and $\tilde
      G_s^\dag=0$ otherwise.
  \end{itemize}
\end{cor}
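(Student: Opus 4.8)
The statement is the joint (multivariate) counterpart of Theorem~\ref{theo:cov-egarch}, and the plan is to reduce it to that theorem along the lines of Corollary~\ref{cor:1}. Recall the decomposition underlying the proof of Theorem~\ref{theo:cov-egarch}: for $s=1,\dots,h$,
\[
\hat\gamma_{p,n}(s)-\gamma_p(s) = \frac1n M_{n,s}^{(p)} + \frac1n T_{n,s}^{(p)} + r_{n,s}\;,
\]
where $M_{n,s}^{(p)}=\sum_{i=1}^{n}\bigl\{|Y_iY_{i+s}|^p-\mathbb{E}[|Y_iY_{i+s}|^p\mid\mathcal F_{i-1}]\bigr\}$ is a martingale with respect to $\{\mathcal F_i\}$, $T_{n,s}^{(p)}=\sum_{i=1}^{n}K_{p,s}^\dag(X_i,\hat X_{i,s})$ is a partial sum of a function of the bivariate long-memory Gaussian process $\{(X_i,\hat X_{i,s})\}$, and $r_{n,s}$ gathers the common sample-mean correction $\bar Y_{p,n}^2-(\mathbb{E}[|Y_0|^p])^2$ together with $O_P(1/n)$ boundary terms; the term $r_{n,s}$ is controlled exactly as in that proof ($a_n=o(b_n)$ from~(\ref{eq:domination}) makes it negligible in the first case, and in the second case it contributes only lower-order Hermite components that are either negligible or merged into $G_s^\dag$). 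Since $M_{n,s}^{(p)}=O_P(b_n^p)$ and $T_{n,s}^{(p)}=O_P(n\rho_n^{\tau_p^\dag(s)/2})$, with $b_n^p$ and $n\rho_n^{\tau_p^\dag(s)/2}$ regularly varying of respective indices $p/\alpha$ and $1-\tau_p^\dag(s)(1-H)$, exactly one of the two terms matters in each case, and the whole issue is the \emph{joint} convergence of that leading term over $s=1,\dots,h$.

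In the first case ($1-\tau_p^\dag(1-H)<p/\alpha$) every $\tau_p^\dag(s)\ge\tau_p^\dag$, hence $1-\tau_p^\dag(s)(1-H)<p/\alpha$ for all $s$, so $n\rho_n^{\tau_p^\dag(s)/2}=o(b_n^p)$ and the martingale vector dominates. I would then argue as in \cite{DavisMikosch2001} and in the proof of Theorem~\ref{theo:cov-egarch}: the point process $N_n$ converges weakly, by Propositions~\ref{prop:egarch-pp-expo} or~\ref{prop:egarch-pp-quadratic}, to the limit in~(\ref{eq:pp-conv}); since every atom of that limit lies on a single coordinate axis and the coordinate marginals are \emph{independent} Poisson processes, applying the summation functional (with the centering that is needed since $\alpha/p\in(1,2)$) produces the vector of \emph{independent} $\alpha/p$-stable limits $(\mathcal L_1,\dots,\mathcal L_h)$. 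I expect the main obstacle to be exactly this point-process-to-sum passage: checking that the summation functional is almost surely continuous at the limiting point measure and that the contribution of the small products to the centred sums is asymptotically negligible, which is where the asymptotic independence estimates~(\ref{eq:indep1})--(\ref{eq:indep-products}) and, in the leverage case, the ad hoc truncation built into Proposition~\ref{prop:egarch-pp-quadratic} through~(\ref{eq:sigma-condition-truncation}) are genuinely used.

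In the second case ($1-\tau_p^\dag(1-H)>p/\alpha$) the rate comparison reverses, $b_n^p=o(n\rho_n^{\tau_p^\dag/2})$, so the martingale part is negligible under the normalization $\rho_n^{-\tau_p^\dag/2}$ and the limit is carried by $(T_{n,1}^{(p)},\dots,T_{n,h}^{(p)})$. These are partial sums of the functions $K_{p,1}^\dag,\dots,K_{p,h}^\dag$ of one and the same Gaussian white noise $\{\eta_i\}$, so the multivariate non-central limit theorem of \cite{arcones:1994} (recalled in Appendix~\ref{sec:LRD-Gaussian}) applies and gives $n^{-1}\rho_n^{-\tau_p^\dag(s)/2}T_{n,s}^{(p)}\stackrel{\scriptstyle d}{\to} G_s^\dag$ jointly over $s=1,\dots,h$, with $G_s^\dag$ a Hermite random variable of order $\tau_p^\dag(s)$, Gaussian when $\tau_p^\dag(s)=1$. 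Finally, writing
\[
\rho_n^{-\tau_p^\dag/2}\,n^{-1}T_{n,s}^{(p)}=\rho_n^{(\tau_p^\dag-\tau_p^\dag(s))/2}\,\bigl\{n^{-1}\rho_n^{-\tau_p^\dag(s)/2}T_{n,s}^{(p)}\bigr\}
\]
and using $\tau_p^\dag=\min_{1\le s\le h}\tau_p^\dag(s)$, the prefactor $\rho_n^{(\tau_p^\dag-\tau_p^\dag(s))/2}$ tends to $1$ when $\tau_p^\dag(s)=\tau_p^\dag$ and to $0$ when $\tau_p^\dag(s)>\tau_p^\dag$; this yields $\tilde G_s^\dag=G_s^\dag$ in the former case and $\tilde G_s^\dag=0$ in the latter, which completes the proof.
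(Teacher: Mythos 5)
Your proposal is correct and follows essentially the same route the paper takes: the stable case comes from the joint point-process convergence of Propositions~\ref{prop:egarch-pp-expo}/\ref{prop:egarch-pp-quadratic} (independence of the coordinate Poisson processes in the limit~(\ref{eq:pp-conv}) giving the independence of $\mathcal L_1,\dots,\mathcal L_h$), and the non-stable case from Arcones' multivariate non-central limit theorem together with the renormalization $\rho_n^{(\tau_p^\dag-\tau_p^\dag(s))/2}\to 0$ or $1$ that produces the degenerate components $\tilde G_s^\dag=0$. One small inaccuracy in your description of the underlying decomposition: for $s\ge 1$ the sum $M_{n,s}^{(p)}$ is \emph{not} a martingale with respect to $\{\mathcal F_i\}$, since its $i$-th increment involves $Y_{i+s}$ and is only $\mathcal F_{i+s}$-measurable; the paper circumvents this by splitting $M_{n,s}^{(p)}$ into $s+1$ sums, each a martingale for the sparser filtration $\{\mathcal F_{i(s+1)}\}$, before applying Doob's inequality --- but this is internal to the proofs of Theorems~\ref{theo:cov-lmsv} and~\ref{theo:cov-egarch} and does not affect the reduction you give for the corollary.
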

The main difference between Theorems~\ref{theo:cov-lmsv} and
\ref{theo:cov-egarch} (or, Corollaries \ref{cor:1} and \ref{cor:2}) is the
Hermite rank considered. Under the conditions that ensure convergence to a
stable limit, the rates of convergence and the limits are the same in both
theorems. Otherwise, the rates and the limits may be different.

\begin{example}
  \label{xmpl:exponential}
  Consider the case $\sigma(x)=\exp(x)$. For all $s\ge 1$ we have
  $\tau_p^\dag=\tau_p^\dag(s)=1$. Thus, under the assumptions of
Theorem~\ref{theo:cov-egarch}, we have:
\begin{itemize}
\item If $H<p/\alpha$, then $n b_n^{-1} \{\hat\gamma_{p,n}(s)-\gamma_p(s)\}$
  converges weakly to a stable law.
\item If $H>p/\alpha$, then $\rho_n^{-1/2}
  \{\hat\gamma_{p,n}(s)-\gamma_p(s)\}$ converges weakly to a zero mean Gaussian
  distribution.
\end{itemize}
The dichotomy is the same as in the LMSV case, but the variance of
the limiting distribution in the case $H>p/\alpha$ is different
except if $\mathbb{E}[Z_0 \exp(c_s\eta_0)] =\mathbb{E}[Z_0] \mathbb{E}[
\exp(c_s\eta_0)]$.
\end{example}

\begin{example}
  \label{xmpl:egarch-different}
  Consider the case $\sigma(x)=x^2$ and $p=1$. Assume that
  $\mathbb{E}[\eta_1|Z_1|]\not=0$. Then for each $s\ge 1$,
    $\tau_1^\dag=\tau_1^\dag(s)=1$ whereas $\tau_p^*=\tau_p^*(s)=2$, thus the
  dichotomy is not the same as in the LMSV case and the rate of convergence
  differs in the case $H>1/\alpha$.
\begin{itemize}
\item If $H < 1/\alpha$, then $n b_n^{-1} \{ \hat\gamma_{n,1}(s) -
  \gamma_1(s)\}$ converges weakly to a stable law.
\item If $H > 1/\alpha$, then $\rho_n^{-1/2} \{ \hat\gamma_{n,1}(s) -
  \gamma_1(s)\}$ converges weakly to a zero mean Gaussian distribution.
\end{itemize}
If we assume now that $\mathbb{E}[\eta_1|Z_1|]=0$, then
$\tau_1^\dag=\tau_p^*=2$. Thus the dichotomy is the same as in the LMSV case,
but the limiting distribution in the non stable case can be different from the
one in the LMSV case.
\begin{itemize}
\item If $2H-1 < 1/\alpha$, then $n b_n^{-1} \{\hat\gamma_{1,n}(s) -
  \gamma_1(s)\}$ converges weakly to a stable law.
\item If $2H-1 > 1/\alpha$, then $\rho_n^{-1} \{\hat\gamma_{1,n}(s) -
  \gamma_1(s)\}$ converges weakly to a zero mean non Gaussian distribution.
\end{itemize}
If moreover $\mathbb{E}[H_2(\eta_1)| Z_1|]=0$, then for each $s$,
the functions $K_{p,s}^*$ and $K_{p,s}^\dag$ are equal, and thus
the limiting distribution is the same as in the LMSV case.
\end{example}

\section{Proofs}\label{sec:proofs}

\begin{lem}
  \label{lem:tail-equivalence}
  Let $Z$ be a nonnegative random variable with a regularly varying right tail
  with index $-\alpha$, $\alpha>0$. Let $g$ be a bounded function on
  $[0,\infty)$ such that $\lim_{x\to+\infty} g(x) = c_g \in(0,\infty)$. Then
  $Zg(Z)$ is tail equivalent to $Z$:
  \begin{align*}
    \lim_{x\to+\infty} \frac{\mathbb{P}(Zg(Z)>x)}{\mathbb{P}(Z>x)} = c_g^\alpha \; .
  \end{align*}
\end{lem}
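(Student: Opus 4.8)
The plan is to show that for $x$ large, the event $\{Zg(Z)>x\}$ is sandwiched between two events of the form $\{Z>x/c\}$ with $c$ close to $c_g$, and then invoke regular variation of the tail of $Z$ to conclude. First I would use the hypothesis $g(y)\to c_g$ as $y\to\infty$: fix $\epsilon>0$ and choose $A$ so large that $c_g-\epsilon\le g(y)\le c_g+\epsilon$ for all $y\ge A$. On the set $\{Z\ge A\}$ we then have the two-sided inequality $(c_g-\epsilon)Z\le Zg(Z)\le(c_g+\epsilon)Z$, which gives the inclusions
\begin{align*}
  \{Z>x/(c_g-\epsilon)\}\cap\{Z\ge A\}\ \subseteq\ \{Zg(Z)>x\}\ \subseteq\ \{Z>x/(c_g+\epsilon)\}\cup\{Z<A\}\cap\{Zg(Z)>x\}\;.
\end{align*}
Since $g$ is bounded, say $|g|\le M$, the last event is contained in $\{Z<A\}\cap\{MZ>x\}$, which is empty once $x>MA$. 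Hence for $x>MA$ we get the clean squeeze
\begin{align*}
  \mathbb{P}(Z>x/(c_g-\epsilon))-\mathbb{P}(Z<A)\ \le\ \mathbb{P}(Zg(Z)>x)\ \le\ \mathbb{P}(Z>x/(c_g+\epsilon))\;,
\end{align*}
where the subtracted term $\mathbb{P}(Z<A)$ can in fact be dropped for $x$ large because $\{Z>x/(c_g-\epsilon)\}\subseteq\{Z\ge A\}$ once $x/(c_g-\epsilon)\ge A$; I would just keep the upper bound and the matching lower bound $\mathbb{P}(Z>x/(c_g-\epsilon))\le \mathbb{P}(Zg(Z)>x)$, valid for $x\ge A(c_g-\epsilon)$.

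Next I would divide through by $\mathbb{P}(Z>x)$ and let $x\to+\infty$. By regular variation of the right tail of $Z$ with index $-\alpha$, $\mathbb{P}(Z>x/c)/\mathbb{P}(Z>x)\to c^{\alpha}$ for any fixed $c>0$. Therefore
\begin{align*}
  (c_g-\epsilon)^{\alpha}\ \le\ \liminf_{x\to+\infty}\frac{\mathbb{P}(Zg(Z)>x)}{\mathbb{P}(Z>x)}\ \le\ \limsup_{x\to+\infty}\frac{\mathbb{P}(Zg(Z)>x)}{\mathbb{P}(Z>x)}\ \le\ (c_g+\epsilon)^{\alpha}\;.
\end{align*}
Letting $\epsilon\downarrow0$ (which is legitimate since $c_g>0$, so $c_g-\epsilon>0$ for $\epsilon$ small) yields $\lim_{x\to+\infty}\mathbb{P}(Zg(Z)>x)/\mathbb{P}(Z>x)=c_g^{\alpha}$, which is the claim.

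There is no real obstacle here; the only point requiring a modicum of care is the handling of the region $\{Z<A\}$, where $g$ need not be close to $c_g$ — this is exactly where boundedness of $g$ is used, to guarantee that $Zg(Z)$ cannot exceed a large threshold $x$ there. Everything else is the elementary uniform-convergence-of-ratios argument for regularly varying tails (essentially a Potter-type bound, but the bare convergence $\mathbb{P}(Z>\lambda x)/\mathbb{P}(Z>x)\to\lambda^{-\alpha}$ suffices). I would also remark that the same argument applied to $-Z$ and $-(Zg(Z))$ (or directly to $|Z|$) gives the corresponding two-sided statement when $Z$ has balanced regularly varying tails, which is the form actually used in the applications such as Example~\ref{ex:decreasing}.
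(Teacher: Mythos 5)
Your proposal is correct and follows essentially the same route as the paper's proof: sandwich $Zg(Z)$ between $(c_g-\epsilon)Z$ and $(c_g+\epsilon)Z$ on the region where $Z$ is large, use boundedness of $g$ to show that the region $\{Z<A\}$ cannot produce the event $\{Zg(Z)>x\}$ once $x>\|g\|_\infty A$, and conclude by regular variation of the tail of $Z$ after letting $\epsilon\downarrow0$. The only cosmetic difference is that the paper uses a multiplicative window $c_g(1\pm\epsilon)$ and restricts directly to $\{Z>x/\|g\|_\infty\}$, whereas you use an additive window and dispose of $\{Z<A\}$ separately; the substance is identical.
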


\begin{proof}
  Fix some $\epsilon>0$ and let $x_0$ be large enough so that
  $|g(x)-c_g|/c_g<\epsilon$ for all $x>x_0$.  The function $g$ is bounded, thus $zg(z)>x$
  implies that $z>x/\|g\|_\infty$ and if $x>x_0\|g\|_\infty$, we have
  \begin{align*}
    \mathbb{P}(Zg(Z)>x) & = \mathbb{P}(Zg(Z)>x,Z>x/\|g\|_\infty)   \\
    & \leq \mathbb{P}(Zc_g(1+\epsilon)>x,Z>x/\|g\|_\infty) \leq \mathbb{P}(Zc_g(1+\epsilon)>x) \;    .
  \end{align*}
  This yields the  upper bound:
\begin{align*}
  \limsup_{x\to+\infty} \frac{ \mathbb{P}(Zg(Z)>x)}{\mathbb{P}(Z>x) } \leq \limsup_{x\to+\infty}
  \frac{ \mathbb{P}(Zc(1+\epsilon)>x)}{\mathbb{P}(Z>x) } = c_g^\alpha(1+\epsilon)^\alpha \; .
\end{align*}
Conversely, we have
\begin{align*}
  \mathbb{P}(Zg(Z)>x) & = \mathbb{P}(Zg(Z)>x,Z>x/\|g\|_\infty)
  \geq \mathbb{P}(Zc_g(1-\epsilon)>x,Z>x/\|g\|_\infty) \\
  & = \mathbb{P}\left( Z > x \max\left\{ \frac1{c_g(1-\epsilon)} , \frac1{\|g\|_\infty}
    \right\} \right) = \mathbb{P}\left( Z > \frac x{c_g(1-\epsilon)}  \right)
\end{align*}
where the last equality comes from the fact that $(1-\epsilon)c_g
\leq c_g = \lim_{z\to+\infty} g(z) \leq \|g\|_\infty$.  Thus
\begin{align*}
  \liminf_{x\to+\infty} \frac{ \mathbb{P}(Zg(Z)>x)}{\mathbb{P}(Z>x) } \geq \limsup_{x\to+\infty}
  \frac{ \mathbb{P}(Zc_g(1-\epsilon)>x)}{\mathbb{P}(Z>x) } = c_g^\alpha(1-\epsilon)^\alpha \; .
\end{align*}
  Since $\epsilon$ is arbitrary, we obtain the desired limit.
\end{proof}

\begin{lem}
  \label{lem:bound-potter}
  Let $Z$ be a nonnegative random variable with a regularly varying right tail
  with index $-\alpha$, $\alpha>0$.  For each $\epsilon>0$, there exists a
  constant $C$, such that for all $x\geq1$ and all $y>0$,
  \begin{align}
    \frac{ \mathbb{P}(yZ>x) }{ \mathbb{P}(Z>x) } \leq C (y\vee1)^{\alpha+\epsilon} \; . \label{eq:claim-potter}
  \end{align}
\end{lem}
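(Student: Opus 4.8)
The plan is to set $\bar F(x)=\mathbb{P}(Z>x)$, rewrite the ratio as $\mathbb{P}(yZ>x)/\mathbb{P}(Z>x)=\bar F(x/y)/\bar F(x)$, and bound it by splitting $(x,y)$ into three regimes according to the positions of $x$, $y$ and $1$. First I record the facts about $\bar F$ that I will use: it is nonincreasing and right-continuous, and since it is regularly varying at infinity with index $-\alpha<0$ the law of $Z$ has unbounded support, so $\bar F(x)>0$ for every $x$; in particular $\bar F$ is bounded above (by $1$) and bounded away from $0$ on every compact subinterval of $[1,\infty)$, and $x\mapsto x^{\alpha+\epsilon}\bar F(x)$ is regularly varying with positive index $\epsilon$, hence tends to $+\infty$.

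If $0<y\le 1$, then $x/y\ge x\ge 1$, so monotonicity gives $\bar F(x/y)\le\bar F(x)$ and the ratio is at most $1=(y\vee1)^{\alpha+\epsilon}$. If $y>1$ and $x\ge y$, then $u:=x/y\ge1$ and $v:=x\ge1$ both lie in $[1,\infty)$, and I invoke the Potter bound for regularly varying functions that are locally bounded and bounded away from $0$ on $[1,\infty)$ (see e.g.\ \cite{resnick:2007}): there is $C_\epsilon\ge1$, depending only on $\epsilon$ and the law of $Z$, with $\bar F(u)/\bar F(v)\le C_\epsilon\max\{(u/v)^{-\alpha+\epsilon},(u/v)^{-\alpha-\epsilon}\}$ for all $u,v\ge1$; since $u/v=1/y\in(0,1)$, the maximum equals $y^{\alpha+\epsilon}$, so the ratio is at most $C_\epsilon(y\vee1)^{\alpha+\epsilon}$. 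Finally, if $y>1$ and $1\le x<y$, then $u=x/y<1$ lies outside the range of the previous step, so I instead combine the trivial bound $\bar F(x/y)\le1$ with a lower bound on the denominator: because $x\mapsto x^{\alpha+\epsilon}\bar F(x)$ is positive on $[1,\infty)$, bounded below by $\bar F(M)>0$ on each interval $[1,M]$ (using $x^{\alpha+\epsilon}\ge1$ and monotonicity), and tends to $+\infty$, it has a strictly positive infimum $c_0$ over $[1,\infty)$; hence $\bar F(x)\ge c_0 x^{-\alpha-\epsilon}$ for all $x\ge1$, and the ratio is at most $\bar F(x)^{-1}\le c_0^{-1}x^{\alpha+\epsilon}\le c_0^{-1}y^{\alpha+\epsilon}=c_0^{-1}(y\vee1)^{\alpha+\epsilon}$. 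Taking $C=\max\{1,C_\epsilon,c_0^{-1}\}$ gives~(\ref{eq:claim-potter}).

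The step requiring care is the middle regime. Applying a two-sided Potter bound to $\bar F(u)$ and to $\bar F(v)$ separately (rather than to the quotient) would leave an uncontrolled power of $x$, so it is essential to keep $u/v=1/y$ intact; moreover the usual statement of Potter's bound holds only for $u,v$ beyond some threshold, so one must check that the threshold can be absorbed into the constant $C_\epsilon$ by exploiting the boundedness and positivity of $\bar F$ on $[1,\infty)$. The region $x/y<1$, which no version of the Potter bound over $[1,\infty)$ reaches, is exactly what forces the separate elementary argument of the third regime; everything else is routine bookkeeping.
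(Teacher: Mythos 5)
Your proof is correct, but it follows a genuinely different route from the paper's. The paper avoids any case split on the relative size of $x$ and $y$ beyond $y\le 1$ versus $y\ge 1$: for $y\ge1$ it writes $\mathbb{P}(yZ>x)\le\mathbb{P}(Z>x)+\mathbb{P}(Z\mathbf 1_{\{Z\le x\}}>x/y)$, bounds the second term by Markov's inequality with exponent $\alpha+\epsilon$ to get $(x/y)^{-\alpha-\epsilon}\mathbb{E}[Z^{\alpha+\epsilon}\mathbf 1_{\{Z\le x\}}]$, and then controls the truncated moment via Karamata's theorem ($\mathbb{E}[Z^{\alpha+\epsilon}\mathbf 1_{\{Z\le x\}}]\le C x^{\alpha+\epsilon}\mathbb{P}(Z>x)$ for $x\ge1$, using that $\mathbb{P}(Z>x)$ is bounded away from zero on compacts). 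Because the Markov step is valid for every value of $x/y>0$, the awkward regime $x/y<1$ that forces your third case never has to be isolated. You instead rewrite the ratio as $\bar F(x/y)/\bar F(x)$ and invoke the globally uniform Potter bound on $[1,\infty)$, which is arguably the most direct reading of the lemma's name, but it only reaches arguments $\ge1$ and so obliges you to supply the separate elementary lower bound $\bar F(x)\ge c_0x^{-\alpha-\epsilon}$ for the region $1\le x<y$ — a point you correctly identify and handle. Both arguments lean on one nontrivial fact from regular variation theory (the truncated-moment Karamata theorem for the paper, the uniform two-variable Potter inequality for you), and both yield a constant of the same quality; the paper's version is slightly leaner in bookkeeping, while yours makes the "Potter bound" character of the statement explicit.
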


\begin{proof}
  If $y \leq 1$, then $\mathbb{P}(yZ>x) \leq \mathbb{P}(Z>x)$ so the requested bound holds
  trivially with $C=1$. Assume now that $y \geq 1$. Then, by Markov's inequality,
  \begin{align}
    \mathbb{P}(yZ > x) & = \mathbb{P}(Z>x) + \mathbb{P}(Z \mathbf 1_{\{Z \leq x\}} >x/y)
    \leq x^{-\alpha-\epsilon} y^{\alpha+\epsilon} \mathbb{E}[ Z^{\alpha+\epsilon}
    \mathbf 1_{\{Z \leq x\}} ] \; . \label{eq:decomp-yz>x}
  \end{align}
  Next, by \cite[Theorem~VIII.9.2]{feller:1971} or
  \cite[Theorem~8.1.2]{bingham:goldie:teugels:1989},
\begin{align*}
  \lim_{x\to+\infty} \frac{\mathbb{E}[Z^{\alpha+\epsilon} \mathbf 1_{\{Z \leq x\}}]}
  {x^{\alpha+\epsilon} \mathbb{P}(Z>x)} = \frac\alpha\epsilon  \; .
\end{align*}
Moreover, the function $x\to\mathbb{P}(Z>x)$ is decreasing on $[0,\infty)$, hence
bounded away from zero on compact sets of $[0,\infty)$.  Thus, there exists a
constant $C$ such that for all $x\geq1$,
\begin{align}
  \frac{\mathbb{E}[Z^{\alpha+\epsilon} \mathbf 1_{\{Z \leq x\}}]} {\mathbb{P}(Z>x)} \leq C
  x^{\alpha+\epsilon} \; . \label{eq:borne-feller}
\end{align}
Plugging~(\ref{eq:borne-feller}) into~(\ref{eq:decomp-yz>x}) yields, for all
$x,y\geq1$,
\begin{align*}
  \frac{ \mathbb{P}(yZ > x) }{ \mathbb{P}(Z>x) }& = 1 + C y^{\alpha+\epsilon}  \; .
\end{align*}
This concludes the proof of~(\ref{eq:claim-potter}).
\end{proof}

\begin{proof}[Proof of Lemma~\ref{lem:asympt-indep-lmsv}]
  Under the assumption of independence between the sequences $\{Z_i\}$ and
  $\{\eta_i\}$, as already mentioned, $Y_0$ is tail equivalent to $Z_0$ and
  $Y_0Y_h$ is tail equivalent to $Z_0Z_1$ for all $h$. The
  properties~(\ref{eq:defd+-}), (\ref{eq:def-bn}), (\ref{eq:domination}) are
  straightforward. We need to prove~(\ref{eq:indep1})
  and~(\ref{eq:indep-products}).   Since $Z_0$ is
  independent of $\sigma_j$ and $Z_j$, by conditioning, we have
  \begin{align*}
    n \mathbb{P}(|Y_0| > a_n x , |Y_0Y_j| > b_n x) & = \mathbb{E} \left[ n \bar F_{|Z|}
      \left( \frac{a_nx}{\sigma_0} \vee \frac {b_n x} {\sigma_0\sigma_j|Z_j|} \right)
    \right]
  \end{align*}
  with $F_{|Z|}$ the distribution function of $|Z_0|$.  Since $a_n/b_n\to0$, for
  any $y>0$, it holds that $\lim_{n\to+\infty} n\bar F_{|Z|}(b_ny)=0$. Thus,
\begin{align*}
  n \bar F_{|Z|} \left( \frac{a_nx}{\sigma_0} \vee \frac {b_n x}
    {\sigma_0\sigma_j|Z_j|} \right) \leq n \bar F_{|Z|} \left( \frac {b_n x}
    {\sigma_0\sigma_j|Z_j|} \right) \to 0 \; , \mbox{ a.s.}
\end{align*}
Moreover, by Lemma~\ref{lem:bound-potter} and the definition of $a_n$, for any
$\epsilon>0$ there exists a constant $C$ such that
\begin{align*}
  n \bar F_{|Z|} \left( \frac{a_nx}{\sigma_0} \vee \frac {b_n x}
    {\sigma_0\sigma_j|Z_j|} \right) \leq n \bar F_{|Z|} \left(
    \frac{a_nx}{\sigma_0} \right) \leq C x^{-\alpha-\epsilon} \sigma_0^{\alpha+\epsilon} \; .
\end{align*}
By assumption,~(\ref{eq:sigma-assumption}) holds for some
$q>\alpha$. Thus, choosing $\epsilon$ small enough allows to apply
the bounded convergence theorem and this proves~(\ref{eq:indep1}).
Next, to prove~(\ref{eq:indep-products}), note that $|Y_i| \wedge
|Y_j| \leq (\sigma_i\vee\sigma_j) (|Z_i|\wedge |Z_j|)$. Thus,
applying Lemma~\ref{lem:bound-potter}, we have
  \begin{align*}
    \mathbb{P}(|Y_0Y_i|>x,|Y_0Y_j|>x ) & = \mathbb{P}(|Z_0| \sigma_0(\sigma_i |Z_i| \wedge
    \sigma_j |Z_j|) > x) \\
    & \leq C \mathbb{P}(|Z_0|>x) \mathbb{E}[\sigma_0^{\alpha+\epsilon}
    (\sigma_i\vee\sigma_j)^{\alpha+\epsilon}] \mathbb{E}[(|Z_i|\wedge
    |Z_j|)^{\alpha+\epsilon}] \; .
  \end{align*}
  The expectation $\mathbb{E}[\sigma_0^{\alpha+\epsilon}
  (\sigma_i\vee\sigma_j)^{\alpha+\epsilon}] $ is finite for $\epsilon$ small
  enough, since Assumption~(\ref{eq:sigma-assumption}) holds with
  $q>2\alpha$. Since $\mathbb{P}(|Z_0|>x) = o(\mathbb{P}(|Z_1Z_2|>x))$, this
  yields~(\ref{eq:indep-products}) in the LMSV case.
\end{proof}

\begin{proof}[Proof of Lemma~\ref{lemma:asymp-indep-EGARCH-expo}]
  It suffices to prove the lemma when the random variables $Z_i$ are
  nonnegative.  Under the assumption of the Lemma, $\exp(c_h \eta_0) Z_0$ is
  tail equivalent to $Z_0$. Thus, by the Corollary in
  \cite[p.~245]{embrechts:goldie:1980}, $Z_0\exp(c_h\eta_0) Z_h$ is regularly
  varying with index $\alpha$ and tail equivalent to $Z_0Z_h$.  Since
  $\mathbb{E}[Z_0^\alpha]=\infty$, it also holds that $\mathbb{P}(Z_0>x) = o(\mathbb{P}(\exp(c_h
  \eta_0) Z_0Z_1>x))$, cf. \cite[Equation~(3.5)]{DavisResnick1986}.

  Define $\hat X_h = \sum_{k=1, k\not=h}^{\infty} c_k \eta_{h-k}$. Then $\hat
  X_h$ is independent of $Z_0$, $\eta_0$ and $Z_h$.  Since $Y_0Y_h =
  \exp(X_0+\hat X_h) Z_0\exp(c_h\eta_0) Z_h$, we can apply Breiman's Lemma
  to obtain that $Y_0Y_h$ is tail equivalent to $Z_0\exp(c_h\eta_0) Z_h$,
  hence to $Z_0Z_1$. Thus~(\ref{eq:domination}) and (\ref{eq:defd+-}) hold with
  \begin{align*}
    d_+(h) = \tilde \beta \frac{\mathbb{E}[\exp(\alpha(X_0+\hat
        X_h))]}{\mathbb{E}[\exp(\alpha(X_0+\hat X_1))]} \; , \ \ d_-(h) = (1-\tilde
    \beta) \frac{\mathbb{E}[\exp(\alpha(X_0+\hat X_h))]}{\mathbb{E}[\exp(\alpha(X_0+\hat X_1))]} \; ,
  \end{align*}
  where $\tilde \beta$ is the skewness parameter of
  $Z_0\exp(c_h\eta_0)
  Z_h$.

  We now prove~(\ref{eq:indep-products}).  For fixed $i,j$ such that $0<i<j$,
  define
  \begin{align*}
    \hat\sigma_i = \sigma(\hat X_i) = \exp\left(\sum_{k=1, k\not=i}^{\infty} c_k
      \eta_{i-k}\right) \; , \ \ \check\sigma_{i,j} = \sigma(\check{X}_{i,j}) =
    \exp\left(\sum_{k=1, k\not=j,j-i}^{\infty} c_k \eta_{j-k}\right) \; .
\end{align*}
Denote $\tilde Z_0^{(k)} = Z_0\exp(c_k \eta_0)$ and $V_i = \exp(c_{j-i}\eta_i)$.
Then
  \begin{align*}
    \mathbb{P}(Y_0Y_i > x , Y_0Y_j > x) & = \mathbb{P}(\sigma_0 \hat\sigma_i \tilde Z_0^{(i)} Z_i
    >x \; , \  \sigma_0  \check\sigma_{i,j} \tilde Z_0^{(i)}  \exp(c_{j-i}\eta_i) Z_j >  x ) \\
    & \leq \mathbb{P}(\sigma_0 (\hat \sigma_i \vee \check\sigma_{i,j}) (\tilde
    Z_0^{(i)}+\tilde Z_0^{(j)}) (Z_i \wedge V_i Z_j) > x) \; .
  \end{align*}
  Now, $(Z_i\wedge V_i Z_j)$ is independent of $\sigma_0 (\hat \sigma_i \vee
  \check\sigma_{i,j}) (\tilde Z_0^{(i)}+\tilde Z_0^{(j)})$, which is tail
  equivalent to $Z_0$ by assumption and Breiman's Lemma. Thus, in order to
  prove~(\ref{eq:indep-products}), we only need to show that for some
  $\delta>\alpha$, $\mathbb{E}[(Z_i \wedge V_i Z_j)^\delta]<\infty$. This is
  true. Indeed, since $\mathbb{E}[V_i^q]<\infty$ for all $q>1$, we can apply
  H\"older's inequality with $q$ arbitrarily close to 1. This yields for
  $p^{-1}+q^{-1}=1$,
  \begin{align*}
    \mathbb{E}[(Z_i \wedge V_i Z_j)^\delta] \leq \mathbb{E}[(1\vee V_i)^\delta (Z_i \wedge
    Z_j)^\delta] \leq \mathbb{E}^{1/p}[(1\vee V_i)^{p\delta}] \, \mathbb{E}^{1/q}[(Z_i
    \wedge Z_j)^{q\delta}] \; .
  \end{align*}
  The tail index of $(Z_i \wedge Z_j)$ is $2\alpha$, and thus $\mathbb{E}^{1/q}[(Z_i
  \wedge Z_j)^{q\delta}] < \infty$ for any $q$ and $\delta$ such that
  $q\delta<2\alpha$. Thus $\mathbb{E}[(Z_i \wedge V_i Z_j)^\delta]<\infty$ for any
  $\delta \in (\alpha,2\alpha)$ and~(\ref{eq:indep-products}) holds. The proof
  of~(\ref{eq:indep1}) is similar.
\end{proof}

\begin{proof}[Proof of Lemma~\ref{lemma:asymp-indep-EGARCH-quadratic}]
  We omit the proof of the regular variation and the tail equivalence between
  $Y_0Y_h$ and $Z_0Z_1$ which is a straightforward consequence of the
  assumption. We prove~(\ref{eq:indep-products}).  Using the notation of the
  proof of Lemma~\ref{lemma:asymp-indep-EGARCH-expo}, by the subadditivity
  property of $\sigma$, we have, for $j>i>0$, and for some constant $C$,
  \begin{align*}
    \mathbb{P}&(Y_0Y_i > x , Y_0Y_j > x) \\
    & = \mathbb{P}(\sigma_0 \sigma(\hat X_i + c_i \eta_0)
    Z_0 Z_i >x \; , \    \sigma_0  \sigma(\check{X}_{i,j} + c_j \eta_0 + c_{j-i} \eta_i)   Z_0   Z_j >  x \} \\
    & \leq \mathbb{P}(C \sigma_0 |Z_0| \{\sigma(\hat X_i) + \sigma(c_i\eta_0)\}
    \{\sigma(\check{X}_{i,j}) + \sigma(c_j\eta_0) + \sigma(c_{j-i} \eta_i)\} (|Z_i| \wedge |Z_j|) > x)    \\
    & \leq \mathbb{P}(C \sigma_0 |Z_0| \sigma(\hat X_i) \sigma(\check{X}_{i,j})(|Z_i| \wedge
    |Z_j|) > x) + \mathbb{P}(C \sigma_0  |Z_0| \sigma(\hat X_i) \sigma(c_j\eta_0) (|Z_i| \wedge |Z_j|) > x) \\
    & + \mathbb{P}(C \sigma_0 |Z_0| \sigma(\hat X_i) \sigma(c_{j-i} \eta_i) (|Z_i| \wedge
    |Z_j|)  > x) + \mathbb{P}(C \sigma_0 |Z_0|  \sigma(c_i\eta_0)\sigma(\check{X}_{i,j})  (|Z_i| \wedge |Z_j|) > x)    \\
    & + \mathbb{P}(C \sigma_0 |Z_0| \sigma(c_i\eta_0) \sigma(c_j\eta_0) (|Z_i| \wedge |Z_j|)
    > x) + \mathbb{P}(C \sigma_0 |Z_0| \sigma(c_i\eta_0) \sigma(c_{j-i} \eta_i) (|Z_i|
    \wedge |Z_j|) > x) \; .
  \end{align*}
  Now, under the assumptions of the Lemma, each of the last six probabilities
  can be expressed as $\mathbb{P}(\tilde Z U>x)$, where $\tilde Z$ is tail equivalent
  to $Z_0$ and $U$ is independent of $\tilde Z$ and $\mathbb{E}[|U|^q]<\infty$ for some
  $q>\alpha$. Thus, by Breiman's Lemma, $\tilde ZU$ is also tail equivalent to
  $Z_0$, and thus $ \mathbb{P}(Y_0Y_i > x , Y_0Y_j > x) = O(\mathbb{P}(|Z_0|>x)) =
  o(\mathbb{P}(|Y_0Y_1|>x))$, which proves~(\ref{eq:indep-products}).
\end{proof}

\subsection{Proof of Propositions~\ref{prop:pp-univarie}, \ref{prop:sv-pp},
  \ref{prop:egarch-pp-expo} and~\ref{prop:egarch-pp-quadratic}}

We omit some details of the proof, since it is a slight modification of the proof of Theorems
3.1 and 3.2 in \cite{DavisMikosch2001}, adapted to a general stochastic
volatility with possible leverage and long memory. Note that the proof of
\cite[Theorem 3.2]{DavisMikosch2001} refers to the proof of Theorem 2.4 in
\cite{davis:resnick:1985l}. The latter proof uses condition (2.6) in
\cite{davis:resnick:1985l}, which rules out long memory.

The proof is in two steps. In the first step we consider an $m$-dependent
approximation $X^{(m)}$ of the Gaussian process and prove point-process
convergence for the corresponding stochastic volatility process $Y^{(m)}$ for
each fixed $m$.  The second step naturally consists in proving that the limits
for the $m$-dependent approximations converge when $m$ tends to infinity, and
that this limit is indeed the limit of the original sequence.

\subsubsection*{First step}
Let $X_i^{(m)}=\sum_{k=1}^{m}c_k\eta_{i-k}$,
$Y_i^{(m)}=\sigma(X_i^{(m)})Z_i$ and define accordingly ${\bf
Y}_{n,i}^{(m)}$. Note that the tail properties
  of the process $\{Y^{(m)}_i\}$ are the same as those of the process $\{Y_i\}$,
  since the latter are proved without any particular assumptions on the
  coefficients $c_j$ of the expansion~(\ref{eq:linear}) apart from square
  summability.  In order to prove the desired point process convergence, as in
the proof of \cite[Theorem~3.1]{DavisMikosch2001}, we must check the following
two conditions (which are Equations (3.3) and (3.4) in \cite{DavisMikosch2001}):
  \begin{align}
    &    \mathbb{P}(\mathbf Y_{n,1}^{(m)} \in \cdot ) \stackrel{\scriptstyle v}{\to} \boldsymbol\nu_m \; ,
   \label{eq:conv-vague} \\
    & \lim_{k\to+\infty} \limsup_{n\to+\infty} n \sum_{i=2}^{[n/k]} \mathbb{E}[g({\bf
      Y}_{n,1}^{(m)}) g({\bf Y}_{n,i}^{(m)}) ] = 0 \;, \label{eq:Dprime}
  \end{align}
  where $\boldsymbol\nu_m$ is the mean measure of the limiting point process and
  (\ref{eq:Dprime}) must hold for any continuous bounded function $g$, compactly
  supported on $[0,1] \times [-\infty,\infty]^{h}\setminus\{\boldsymbol0\}$.

  The convergence~(\ref{eq:conv-vague}) is a straightforward consequence of the
  joint regular variation and the asymptotic independence
  properties~(\ref{eq:indep1}), (\ref{eq:indep-products}) of
  $Y_0,Y_0Y_1,\dots,Y_0Y_h$.  Let us now prove~(\ref{eq:Dprime}). Note first
  that, because of asymptotic independence, for any fixed $i$,
\begin{align*}
  \lim_{n\to+\infty} n \mathbb{E}[g({\bf Y}_{n,1}^{(m)}) g({\bf Y}_{n,i}^{(m)}) ] = 0  \; .
\end{align*}
Next, by $m$-dependence, for each $k$, as $n\to+\infty$, we have
\begin{align*}
  n \sum_{i=2+m+h}^{[n/k]} \mathbb{E}[g({\bf Y}_{n,1}^{(m)}) g({\bf Y}_{n,i}^{(m)}) ]
  & = n \sum_{i=2+m+h}^{[n/k]} \mathbb{E}[g({\bf Y}_{n,1}^{(m)})] \mathbb{E}[ g({\bf
    Y}_{n,i}^{(m)})] \\ & \sim \frac 1k \left( n\mathbb{E}[g({\bf Y}_{n,1}^{(m)})]
  \right)^2 \to \frac 1k \left(\int g \mathrm d \boldsymbol\nu_m \right) ^2 \; .
\end{align*}
This yields~(\ref{eq:Dprime}). Thus, we obtain that
$$
\sum_{i=1}^n \delta_{(i/n,{\bf Y}_{n,i}^{(m)})} \Rightarrow \sum_{l=1}^h
\sum_{k=1}^{\infty} \delta_{(t_k,j_{k,l}^{(m)}{\bf e}_l)},
$$
where $\sum_{k=1}^{\infty} \delta_{(t_k,j_{k,0}^{(m)})},
\dots,\sum_{k=1}^{\infty} \delta_{(t_k,j_{k,h}^{(m)})}$ are
independent Poisson processes with respective mean measures
\begin{align}
  \lambda_{0,m}(\mathrm d x) = \alpha \left\{ \beta_{m} x^{-\alpha-1} \mathbf
    1_{(0,\infty)}(x) + (1-\beta_{ m}) (-x)^{-\alpha-1} \mathbf
    1_{(-\infty,0)}(x) \right\} \mathrm d x \; , \label{mean-measure-1}
  \\
  \lambda_{s,m}(\mathrm d x) = \alpha \left\{ d_+^{(m)}(s) x^{-\alpha-1} \mathbf
    1_{(0,\infty)}(x) + d_-^{(m)}(s) (-x)^{-\alpha-1} \mathbf 1_{(-\infty,0)}(x)
  \right\}\mathrm d x \; ,\label{mean-measure-2}
\end{align}
where $d_+^{(m)}(s)$ and $d_-^{(m)}(s)$ depend on the process considered and
$\beta_m$ = ${\beta \mathbb{E}[\sigma^\alpha(X^{(m)})]/\mathbb{E}[\sigma^\alpha(X)]}$.

\subsubsection*{Second step}
We must now prove that
\begin{align}
  \label{eq:3.13}
  N_m \Rightarrow N
\end{align}
as $m\to+\infty$ and that for all $\eta>0$,
\begin{align}
  \label{eq:3.14}
  \lim_{m\to+\infty}
\limsup_{n\to+\infty} \mathbb{P}(\varrho(N_n,N_n^{(m)})>\eta) = 0 \; .
\end{align}
where $\varrho$ is the metric inducing the vague topology.  Cf. (3.13) and
(3.14) in \cite{DavisMikosch2001}.  To prove~(\ref{eq:3.13}), it suffices to
prove that
\begin{align}
  \lim_{m\to+\infty} \beta_m & = \beta \; , \label{eq:conv-betam} \\
\ \lim_{m\to+\infty} d_+^{(m)}(s) & = d_+(s) \; ,
  \ \lim_{m\to+\infty} d_-^{(m)}(s)=d_-(s) \; . \label{eq:conv-d+-m}
\end{align}
To prove~(\ref{eq:3.14}), as in the proof of
\cite[Theorem~3.3]{DavisMikosch2001}, it suffices to show that for
all $\epsilon>0$,
\begin{align}
  & \lim_{m\to+\infty} \limsup_{n\to+\infty} n \mathbb{P} (a_n^{-1} |Y_0-Y_{0}^{(m)}| >
  \epsilon) = 0 \; , \label{eq:pp-tightness1}  \\
  & \lim_{m\to+\infty} \limsup_{n\to+\infty} n \mathbb{P} \left(b_n^{-1} |Y_0Y_{s} -
    Y_0^{(m)}Y_s^{(m)}| > \epsilon \right) = 0 \; . \label{eq:pp-tightness2}
\end{align}
If~(\ref{eq:sigma-assumption}) holds for some $q>\alpha$ and if $\sigma$ is
continuous, then~(\ref{eq:conv-betam}) holds by bounded convergence, in both the
LMSV case and the case of leverage.
We now prove~(\ref{eq:pp-tightness1}). Since $Y_0$ and $Z_0$ are tail
equivalent, by Breiman's Lemma, we have
\begin{align*}
  \limsup_{n\to+\infty} n \mathbb{P} (a_n^{-1} |Y_0-Y_{0}^{(m)}| > \epsilon) \leq C
  \epsilon^{-\alpha} \mathbb{E}[|\sigma(X_0^{(m)})-\sigma(X_0)|^{\alpha}] \; .
\end{align*}
Continuity of $\sigma$, Assumption~(\ref{eq:sigma-assumption}) with $q>\alpha$
and the bounded convergence theorem imply that $\lim_{m\to+\infty}
\mathbb{E}[|\sigma(X_0^{(m)})-\sigma(X_0)|^{\alpha}]=0$. This
proves~(\ref{eq:pp-tightness1}) in both the LMSV case and the case of leverage.
We now split the proof of~(\ref{eq:conv-d+-m}) and~(\ref{eq:pp-tightness2})
between the LMSV and leverage cases.

\subsubsection*{LMSV case.}
In this case, we have
\begin{align*}
  d_+^{(m)}(s) = d_+(s)
  \frac{\mathbb{E}[\sigma^\alpha(X_0^{(m)})\sigma^\alpha(X_s^{(m)})]}
  {\mathbb{E}[\sigma^\alpha(X_0)\sigma^\alpha(X_s)]} \; , \ d_-^{(m)}(s) = d_-(s)
  \frac{\mathbb{E}[\sigma^\alpha(X_0^{(m)})\sigma^\alpha(X_s^{(m)})]}
  {\mathbb{E}[\sigma^\alpha(X_0)\sigma^\alpha(X_s)]} \; .
\end{align*}
For $s = 1,\dots,h$, define
$$
W_{m,s} = \sigma(X_0^{(m)})\sigma(X_{s}^{(m)}) - \sigma(X_1)\sigma(X_{1+s}) \; .
$$
Continuity of $\sigma$ implies that  $W_{m,s} \stackrel{\scriptstyle P}{\to} 0$
as $m\to+\infty$.  Under the Gaussian assumption, $X^{(m)} \stackrel d = u_m X$
for some $u_m\in(0,1)$, thus if~(\ref{eq:sigma-assumption}) holds for some
$q'>\alpha$, then it also holds that
\begin{align*}
  \sup_{m\geq1} \mathbb{E}[\sigma^{q'}(X^{(m)})] <\infty \; ,
\end{align*}
hence $W_m$ converges to 0 in $L^q$ for any $q<q'$.  Likewise, since
assumption (\ref{eq:sigma-assumption}) holds for some $q'>2\alpha$,
$W_{m,s}$ converges to 0 in $L^q$ for any $q<q'$.  Since $|W_m|$ and
$|W_{m,s}|$ converge to 0 in $L^\alpha$, we obtain that
$d_+^{(m)}(s)$ and $d_-^{(m)}(s)$ converge to the required limits.
We now prove~(\ref{eq:pp-tightness2}). Since $Z_0Z_s$ is tail
equivalent to $Y_0Y_1$, by another application of Breiman's Lemma,
we obtain, for $s=1,\ldots,h$ and $\epsilon>0$,
\begin{align*}
  \limsup_{n\to+\infty} \mathbb{P} (b_n^{-1} |Y_0Y_{s}-Y_{0}^{(m)}Y_{s}^{(m)}| >
  \epsilon ) & \le \limsup_{n\to+\infty} n \mathbb{P} \left(b_n^{-1} |Z_0Z_{s}| |W_{m,s}|
    > C \epsilon \right) \leq C^{-\alpha} \epsilon^{-\alpha}
  \mathbb{E}[|W_{m,s}|^{\alpha}]
\end{align*}
which converges to 0 as $m\to+\infty$. This concludes the proof of~(\ref{eq:pp-tightness2}) in the LMSV case.

To prove (\ref{eq:pp-tightness2}) in the case of leverage, we
further split the proof between the cases $\sigma(x)=\exp(x)$ and
$\sigma$ subadditive.

\subsubsection*{Case of leverage, $\sigma(x)=\exp(x)$}

Define $\hat X_s = \sum_{j=1 , {j\not= s}}^\infty c_j \eta_{s-j}$, $\hat
X_s^{(m)} = \sum_{j=1 , {j\not= s}}^m c_j \eta_{s-j}$ and
\begin{align*}
  \tilde W_{m,s} = |\exp(X_0+\hat X_s) - \exp(X_0^{(m)}+\hat
    X_s^{(m)})| \; .
\end{align*}
As previously, we see that $\tilde W_{m,s}$ converges to 0 in $L^q$ for some
$q>\alpha$.  Thus, we obtain that
$$
\sum_{i=1}^n \delta_{(i/n,{\bf Y}_{n,i}^{(m)})} \Rightarrow
\sum_{s=0}^h \sum_{k=1}^{\infty} \delta_{(t_k,j_{k,s}^{(m)}{\bf
e}_s)} \; ,  (n\to+\infty) \; ,
$$
where $\sum_{k=1}^{\infty} \delta_{(t_k,j_{k,0}^{(m)})},
\dots,\sum_{k=1}^{\infty} \delta_{(t_k,j_{k,h}^{(m)})}$ are
independent Poisson processes with respective mean measures
$\lambda_{{s,m}}(dx)$, $s=0,\ldots,h$, defined in
(\ref{mean-measure-1})-(\ref{mean-measure-2}) with the constants
$d_+^{(m)}(s)$ and $d_-^{(m)}(s)$ that appear therein given by
\begin{align*}
  d_+^{(m)}(s) = d_+(s) \frac{\mathbb{E}[\exp(\alpha(X_0^{(m)}+\hat X_s^{(m)}))]}
  {\mathbb{E}[\exp(\alpha(X_0+\hat X_s))]} \; , \ \
  d_-^{(m)}(s) = d_-(s) \frac{\mathbb{E}[\exp(\alpha(X_0^{(m)}+\hat X_s^{(m)}))]}
  {\mathbb{E}[\exp(\alpha(X_0+\hat X_s))]} \; .
\end{align*}
Since $|\tilde W_{m,s}|$ converges to 0 in $L^q$, we obtain
$$
\sum_{k=1}^{\infty} \delta_{(t_k,j_{k,s}^{(m)})} \Rightarrow
\sum_{k=1}^{\infty} \delta_{(t_k,j_{k,s})} \; , (m\to+\infty) \; ,
\qquad s = 0,\dots,h \; .
$$
Then, for $s=1,\ldots,h$, we obtain, with $\tilde Z_0^{(s)} = Z_0
\exp(c_s
  \eta_0)$, for $\epsilon>0$,
\begin{align*}
  \limsup_{n\to+\infty} n \mathbb{P} \left(b_n^{-1} |Y_0Y_s - Y_0^{(m)}Y_s^{(m)}| > \epsilon \right)
  & = \limsup_{n\to+\infty} n \mathbb{P} \left(b_n^{-1} |Z_0\tilde Z_{0}^{(s)}| |\tilde
    W_{m,s}| > \epsilon \right) \leq C \epsilon^{-\alpha} \mathbb{E}[|\tilde W_{m,s}|^\alpha]
\end{align*}
which converges to 0 as $m\to+\infty$. This
proves~(\ref{eq:pp-tightness2}) and concludes the proof in the case
of leverage with~$\sigma(x) = \exp(x)$.

\subsubsection*{Case of leverage, $\sigma$ subadditive}
We have to bound
\begin{align*}
n\mathbb{P} (|Z_0Z_s||\sigma(X_0)\sigma(X_{s})-\sigma(X_0^{(m)})\sigma(X_{s}^{(m)})|>\epsilon b_n) \; .
\end{align*}
It suffices to bound two terms
\begin{align*}
  I_1(n,m)=n\mathbb{P} (|Z_0Z_s||\sigma(X_0)-\sigma(X_0^{(m)})|\sigma(X_{s}^{(m)})>\epsilon b_n) \; , \\
  I_2(n,m)=n\mathbb{P} (|Z_0Z_s|\sigma(X_0)|\sigma(X_{s})-\sigma(X_{s}^{(m)})|>\epsilon  b_n) \; .
\end{align*}
Recall that $X_s^{(m)}=\hat X_s^{(m)}+c_s\eta_0$ and $X_s=\hat
X_s+c_s\eta_0$. By subadditivity of $\sigma$, we have, for some constant $\delta$,
\begin{align*}
  I_1(n,m) \leq & n\mathbb{P} (|Z_0Z_s| |\sigma(X_0)-\sigma(X_0^{(m)})| \sigma(\hat
  X_{s}^{(m)}) > C \epsilon b_n) \\
  & + n\mathbb{P} (|Z_0Z_s| |\sigma(X_0)-\sigma(X_0^{(m)})| \sigma(c_s\eta_0) > \delta
  \epsilon b_n) \; .
\end{align*}
The product $Z_0Z_s$ is independent of
$|\sigma(X_0)-\sigma(X_0^{(m)})|\sigma(\hat X_{s}^{(m)})$ and tail equivalent to
$Y_0Y_1$, thus we obtain
$$
\limsup_{n\to+\infty}n\mathbb{P} (|Z_0Z_s||\sigma(X_0)-\sigma(X_0^{(m)})|\sigma(\hat
X_{s}^{(m)}) > \delta \epsilon b_n) \le C \epsilon^{-\alpha}
\mathbb{E}[|\sigma(X_0)-\sigma(X_0^{(m)})|^{\alpha} \sigma^{\alpha}(\hat X_{s}^{(m)})]
\; .
$$
We have already seen that $\sigma(X_0^{(m)})$ converges to $\sigma(X_0)$ in
$L^\alpha$, thus the latter expression converges to 0 as $m\to+\infty$.  By
assumption, $\sigma(c_s\eta_0)|Z_0Z_s|$ is either tail equivalent to $|Z_0Z_s|$
or $\mathbb{E}[\sigma^q(c_s\eta_0)|Z_0Z_s|^q]<\infty$ for some $q>\alpha$, and since
it is independent of $|\sigma(X_0) - \sigma(X_0^{(m)})|$, we obtain that
$$
\limsup_{n\to+\infty} n\mathbb{P} (\sigma(c_s\eta_0)|Z_0Z_s||\sigma(X_0) -
\sigma(X_0^{(m)})|>\epsilon b_n)  \leq C
\epsilon^{-\alpha}\mathbb{E}[|\sigma(X_0)-\sigma(X_0^{(m)})|^{\alpha}] \;,
$$
where $ C=0$ in the latter case. In both cases, this yields
\begin{align*}
  \lim_{m\to+\infty} \limsup_{n\to+\infty} n\mathbb{P} (\sigma(c_s\eta_0)|Z_0Z_s|
  |\sigma(X_0) - \sigma(X_0^{(m)})| > \epsilon b_n) = 0 \; .
\end{align*}
Thus we have obtained that $\lim_{m\to+\infty}\limsup_{n\to+\infty}I_1(n,m)=0$.

For the term $I_{2}(n,m)$ we use
assumption~(\ref{eq:sigma-condition-truncation}) with $x=c_s\eta_0$, $y=\hat
X_s$ and $z=\hat X_s^{(m)}$. Thus
\begin{align*}
  I_2(n,m) \leq n \mathbb{P} (|Z_0Z_s| (\sigma(c_s\eta_0) \vee 1) \tilde W_{m,s} >
  \epsilon b_n) \; ,
\end{align*}
with
\begin{align*}
  \tilde W_{m,s} = \sigma(X_0) \{(\sigma(\hat X_s) \vee 1) + (\sigma(\hat
  X_s^{(m)}) \vee 1)\}|\hat X_{s} - \hat X_{s}^{(m)}| \; .
\end{align*}
Note that $\tilde W_{m,s}$ is independent of $|Z_0Z_s| (\sigma(c_s\eta_0) \vee
1)$ and $\tilde W_{m,s}$ converges to 0 when $m\to+\infty$ in $L^q$ for some
$q>\alpha$. Since $|Z_0Z_s| \sigma(c_s\eta_0)$ is tail equivalent to $|Y_0Y_1|$
or has a finite moment of order $q'$ for some $q'>\alpha$, we have
\begin{align*}
  \limsup_{n\to+\infty} n \mathbb{P} (|Z_0Z_s| (\sigma(c_s\eta_0) \vee 1) \tilde W_{m,s}
  > \epsilon b_n) \leq C \mathbb{E}[\tilde W_{m,s}^\alpha] \; ,
\end{align*}
where the constant $C$ can be zero in the latter case. In both cases, we
conclude
\begin{align*}
  \lim_{m\to+\infty} \limsup_{n\to+\infty} n \mathbb{P} (|Z_0Z_s| (\sigma(c_s\eta_0) \vee
  1) \tilde W_{m,s} > \epsilon b_n) = 0 \; .
\end{align*}

\subsection{Proof of Theorem \ref{thm:partial-sums-egarch}}
We start by studying $S_{p,n}$.   Write
\begin{align*}
  \sum_{i=1}^{[nt]}\left(|Y_i|^p-\mathbb{E}[|Y_0|^p]\right) & =
  \sum_{i=1}^{[nt]}\left(|Y_i|^p-\mathbb{E}[|Y_i|^p|{\cal F}_{i-1}]\right)
  +\sum_{i=1}^{[nt]} \left(\mathbb{E}[|Y_i|^p|{\cal F}_{i-1}]-\mathbb{E}[|Y_0|^p]\right) \\
  & =: M_n(t)+R_n(t)\;.
\end{align*}
Note that $\mathbb{E}[|Y_i|^p|{\cal F}_{i-1}] = \mathbb{E}[|Z_0|^p]\sigma^p(X_i)$ is a
function of $X_i$ and does not depend on $Z_i$.  Then, by
\cite[Theorem~6]{arcones:1994}, for $\tau_p(1-H)<1/2$ we have
\begin{equation}
  \label{eq:lrd-limit-1-egarch}
  n^{-1} \rho_n^{-\tau_p/2} R_n \stackrel{\scriptstyle \mathcal D}{\Rightarrow} \frac{J_{\tau_p}(\sigma^p) \mathbb{E}[|Z_1|^p]}{\tau_p !} R_{\tau_p,H} \; .
\end{equation}
If $\tau_p(1-H)>1/2$ then by \cite[Theorem~4]{arcones:1994}, we obtain
\begin{equation}
  \label{eq:lrd-limit-2-egarch}
  n^{-1/2} R_n \stackrel{\scriptstyle \mathcal D}{\Rightarrow} \varsigma \mathbb{E}[|Z_0|^p] B  \; ,
\end{equation}
where $B$ is the standard Brownian motion and $\varsigma^2 = \var(\sigma^p(X_0))+
2\sum_{i=1}^\infty \mathrm{cov}(\sigma^p(X_0),\sigma^p(X_i))$.  We will show that
under the assumptions of Theorem \ref{thm:partial-sums-egarch} we have,
\begin{eqnarray}
  \label{eq:Levy-conv-egarch}
  a_n^{-p} M_n \stackrel{\scriptstyle \mathcal D}{\Rightarrow} L_{\alpha/p}\; .
\end{eqnarray}
The convergences~(\ref{eq:lrd-limit-1-egarch}), (\ref{eq:lrd-limit-2-egarch})
and (\ref{eq:Levy-conv-egarch}) conclude the proof of the theorem.  We now
prove~(\ref{eq:Levy-conv-egarch}). The proof is very similar to the proof of the
convergence of the partial sum of an i.i.d.~sequence in the domain of attraction
of a stable law to a L\'evy stable process. The differences are some additional
technicalities. See e.g. \cite[Proof of Theorem~7.1]{resnick:2007} for more
details.  For $0<\epsilon<1$, decompose it further as
\begin{align*}
  M_n(t) & = \sum_{i=1}^{[nt] }\left\{ |Y_i|^p \mathbf1_{\{|Y_i|<\epsilon
      a_n\}}-\mathbb{E}\left[|Y_i|^p\mathbf1_{\{|Y_i|<\epsilon a_n\}}|{\cal F}_{i-1}\right]\right\} \\
  & + \sum_{i=1}^{[nt] } \left\{ |Y_i|^p \mathbf 1_{\{|Y_i| > \epsilon a_n\}} -
    \mathbb{E}\left[|Y_i|^p\mathbf1_{\{|Y_i|>\epsilon a_n\}}|{\cal F}_{i-1}\right]\right\} =:
  M_n^{(\epsilon)}(t)+\tilde M_n^{(\epsilon)}(t) \; .
\end{align*}
The term $\tilde M_n^{(\epsilon)}(\cdot)$ is treated using the point process
convergence. Since for any $\epsilon>0$, the summation functional is almost
surely continuous from the set of Radon measures on $[0,1] \times
[\epsilon,\infty)$ onto $\mathcal D([0,1],\mathbb R)$ with respect to the
distribution of the Poisson point process with mean measure $\nu_0$ (see
e.g. \cite[p.~215]{resnick:2007}), from Proposition \ref{prop:pp-univarie} we
conclude
\begin{align}
  \label{eq:proof-1b-egarch}
  a_n^{-p} \sum_{i=1}^{[n\cdot] } |Y_i|^p \mathbf 1_{\{|Y_i|>\epsilon a_n\}}
  \stackrel{\scriptstyle \mathcal D}{\Rightarrow} \sum_{t_k\le (\cdot)} |j_k|^p \mathbf 1_{\{|j_k|>\epsilon\}} \; .
\end{align}
Taking expectation in (\ref{eq:proof-1b-egarch}) we obtain
\begin{align}
  \label{eq:proof-1c-egarch}
  \lim_{n\to+\infty} [nt] a_n^{-p} \mathbb{E} \left[|Y_0|^p \mathbf
    1_{\{|Y_1|>\epsilon a_n\}} \right] = t \int_{\{x:|x|>\epsilon\}} |x|^p
  \lambda_0(\mathrm d x)
\end{align}
uniformly with respect to $t\in[0,1]$ since it is a sequence of increasing
functions with a continuous limit. Furthermore, we claim that
\begin{align}
  \label{eq:proof-1d-egarch}
  a_n^{-p} \left|\sum_{i=1}^{[nt] } \left\{ \mathbb{E}\left[|Y_0|^p \mathbf
        1_{\{|Y_1|>\epsilon a_n\}} \right] - \mathbb{E} \left[ |Y_i|^p \mathbf
        1_{\{|Y_i|>\epsilon a_n\}}|{\cal F}_{i-1}\right] \right\} \right|
  \stackrel{\scriptstyle P}{\to} 0\;,
\end{align}
uniformly in $t\in [0,1]$. We use the variance inequality
(\ref{eq:variance-inequality-lrd}) to bound the variance of the last expression
by
\begin{align*}
  a_n^{-2p} [nt]^2 \rho_{[nt]} \,
  \mathrm{var}\left(\mathbb{E}[|Y_1|^p\mathbf1_{\{|Y_1|>\epsilon a_n\}}|{\cal F}_0]\right) \leq
  a_n^{-2p} [nt]^2 \rho_{[nt]} \mathbb{E} \left[ \left(\mathbb{E}[|Y_1|^p \mathbf
      1_{\{|Y_1|>\epsilon a_n\}}|{\cal F}_0]\right)^2 \right] \; .
\end{align*}
If $p<\alpha<2p$, by Karamata's Theorem (see \cite[p.~25]{resnick:2007}) and
Potter's bound,
$$
\mathbb{E} [\sigma^p(x) |Z_1|^p \mathbf 1_{\{|\sigma(x)Z_1|>\epsilon a_n\}}] \leq C
n^{-1} a_n^{p} \frac{\bar F_Z(\epsilon a_n/\sigma(x))}{\bar F_Z(a_n)} \leq C n^{-1}
a_n^p \sigma^{\alpha+\epsilon} (x)\;.
$$
Since by assumption $\mathbb{E}[\sigma^{2\alpha+2\epsilon}(X_0)] < \infty$ for some
$\epsilon>0$, for each $t$, we have
\begin{multline}
  \mathrm{var} \left( a_n^{-p} \sum_{i=1}^{[nt] } \left\{ \mathbb{E}\left[|Y_0|^p \mathbf
        1_{\{|Y_0|>\epsilon a_n\}} \right] - \mathbb{E} \left[ |Y_i|^p \mathbf
        1_{\{|Y_i|>\epsilon a_n\}}|{\cal F}_{i-1}\right] \right\} \right) \\
  \leq C n^{-2} [nt]^2 \rho_{[nt]} \leq C n^{2H-2+\epsilon} t^{2H-\epsilon} \;  , \label{eq:same-arguments}
\end{multline}
where the last bound is obtained for some $\epsilon>0$ by Potter's bound.  This
proves convergence of finite dimensional distribution to 0 and tightness in
$\mathcal D([0,1],\mathbb R)$.
As in \cite[p.~216]{resnick:2007}, we now argue that (\ref{eq:proof-1b-egarch}),
(\ref{eq:proof-1c-egarch}) and~(\ref{eq:proof-1d-egarch}) imply that
\begin{align}
  \label{eq:weak-conv-egarch}
  a_n^{-p} \tilde M_n^{(\epsilon)} \stackrel{\scriptstyle \mathcal D}{\Rightarrow} L_{\alpha/p}^{(\epsilon)} \; ,
\end{align}
and it also holds that $L_{\alpha/p}^{(\epsilon)} \stackrel{\scriptstyle \mathcal D}{\Rightarrow} L_{\alpha/p}$ as
$\epsilon\to0$.
Therefore, to show (\ref{eq:Levy-conv-egarch}) is suffices to show the
negligibility of $a_n^{-p}M_n^{(\epsilon)}$. By Doob's martingale inequality we
evaluate
\begin{align*}
  \mathbb{E} &\left[ \left(\sup_{t\in [0,1]} a_n^{-p} \sum_{i=1}^{[nt]} \left\{
        |Y_i|^p \mathbf1_{\{|Y_i|<\epsilon a_n\}} - \mathbb{E} \left[|Y_i|^p \mathbf1_{\{|Y_i|< \epsilon a_n\}}| {\cal F}_{i-1} \right] \right\} \right)^2 \right]  \\
  & \leq C na_n^{-2p} \mathbb{E} \left[ \left(|Y_1|^p \mathbf1_{\{|Y_1|<\epsilon
        a_n\}} - \mathbb{E} \left[|Y_1|^p \mathbf1_{\{|Y_1|<\epsilon a_n\}}|{\cal
          F}_{0}\right]\right)^2  \right] \\
  &\leq 4 C n a_n^{-2p} \mathbb{E} \left[|Y_1|^{2p} \mathbf1_{\{|Y_1|<\epsilon a_n\}}  \right] \;.
\end{align*}
Recall that $\alpha<2p$. By Karamata's theorem (see \cite[p. 25]{resnick:2007}),
\begin{align}
  \label{eq:Karamata}
  \mathbb{E} \left[ |Y_1|^{2p} \mathbf1_{\{|Y_1|<\epsilon a_n\}}\right] & \sim
  \frac{2\alpha}{2p-\alpha}(\epsilon a_n)^{2p} \bar F_Y(\epsilon a_n) \sim
  \frac{2\alpha}{2p-\alpha}\epsilon^{2p-\alpha}a_n^{2p}n^{-1} \; .
\end{align}
Applying this and letting $\epsilon\to 0$ we conclude that
$a_n^{-p}M_n^{(\epsilon)}$ is uniformly negligible in $L^2$ and so in
probability, and thus we conclude that $a_n^{-p}M_n\stackrel{\scriptstyle \mathcal D}{\Rightarrow} L_{\alpha/p}$.

For $p>\alpha$, $\mathbb{E}[|Y_0|^p]=\infty$. In that case it is well known (see
e.g. \cite[Theorem 3.1]{davis:hsing:1995}) that the convergence of $a_n^{-p}
S_{p,n}$ to an $\alpha/p$-stable L\'evy process follows directly from the
convergence of the point process $\sum_{i=1}^n \delta_{Y_i/a_n}$ to a Poisson
point process, and that no centering is needed. In the present context, this
entirely dispenses with the conditioning argument and the long memory part does
not appear. Therefore convergence to stable L\'evy process always holds.

As for the sum $S_n$, since $\mathbb{E}[Y_0]=\mathbb{E}[Z_0]=0$, the long memory part $R_n$
is identically vanishing, thus in this case also only the stable limit arises.

\subsection{Proof of Theorem~\ref{theo:cov-lmsv}}
Let $U_i = |Y_iY_{i+s}|$.  We now write
\begin{align*}
  \sum_{i=1}^{n} \left(U_i^p - \mathbb{E}[U_0^p] \right) & = \sum_{i=1}^{n} \left(U_i^p -
    \mathbb{E}[U_i^p \mid {\cal F}_{i-1}] \right) + \sum_{i=1}^{n}
  \left(\mathbb{E}[U_i^p \mid {\cal F}_{i-1}] - \mathbb{E}[U_0^p] \right)  \\
  & = M_{n,s} + \sum_{i=1}^{n} K_p^*(X_i,\hat X_{i,s}) = M_{n,s} + T_{n,s}\;.
\end{align*}
As mentioned above, the second part is the partial sum of a sequence
of a function of the bivariate Gaussian sequence $(X_i,\hat
X_{i,s})$. The proof of the convergence to a stable law mimics the
proof of Theorem \ref{thm:partial-sums-egarch}. We split $M_{n,s}$
between big jumps and small jumps. Write $M_{n,s}^{(\epsilon)} +
\tilde M_{n,s}^{(\epsilon)}$, with
\begin{align*}
  M_{n,s}^{(\epsilon)} = \sum_{i=1}^{n} \left(U_i^p \mathbf 1_{\{U_i \leq b_n
      \epsilon\}} - \mathbb{E}[U_i^p \mathbf 1_{\{U_i \leq b_n \epsilon\}} \mid {\cal
      F}_{i-1}] \right) \; .
\end{align*}
The point process convergence yields the convergence of the big jumps parts by
the same argument as in the proof of Theorem~\ref{thm:partial-sums-egarch}.  In
order to prove the asymptotic negligibility of the small jumps parts, the only
change that has to be made comes from the observation that $\tilde
M_{n,s}^{(\epsilon)}$ is no longer a martingale. However, assuming for
simplicity that we have $(s+1)n$ observations $Y_i$, we write, with $U_{i,k} =
U_{(s+1)i-k} = |Y_{(s+1)i-k}Y_{(s+1)i+s-k}|$,
$$
M_{n,s}^{(\epsilon)} = \sum_{k=0}^s \sum_{i=1}^{n} \left\{U_{i,k}^p \mathbf
  1_{\{U_i \leq b_n \epsilon\}} - \mathbb{E}\left[U_{i,k}^p \mathbf 1_{\{U_i \leq b_n
      \epsilon\}} \mid {\cal F}_{(s+1)i-k-1}\right] \right\} = : \sum_{k=0}^s
M_{n,s,k}^{(\epsilon)} \; .
$$
Clearly, each $M_{n,s,k}^{(\epsilon)}$, $k=0,\ldots,s$, is a
martingale with respect to the filtration $\{\mathcal F_{i(s+1)}, 1
\leq i \leq n\}$, therefore we can apply Doob's inequality and
conclude the proof with the same arguments as previously.

\subsection{Proof of Theorem~\ref{theo:cov-egarch}}
Again, we mimic the proof of Theorem \ref{thm:partial-sums-egarch}, however,
some technical modifications are needed.  We use the decomposition between small
jumps and big jumps. To prove negligibility of the small jumps, we use the same
splitting technique as in the proof of Theorem~\ref{theo:cov-lmsv}. To deal with
the big jumps, the only adaptation needed is to obtain a bound for the quantity
\begin{align}
  b_n^{-2p} n^2 \rho_{n} \mathbb{E} \left[ \left(\mathbb{E}[|Y_0Y_{s}|^p \mathbf
      1_{\{|Y_0Y_{s}| > \epsilon b_n\}}|{\cal F}_{-1}] \right)^2 \right] \;  . \label{eq:variance-a-borner}
\end{align}
To show that (\ref{eq:proof-1d-egarch}) still holds in the present context, we
must prove that the expectation in~(\ref{eq:variance-a-borner}) is of order
$n^{-2} b_n^{2p}$. The rest of the arguments to prove the convergence of the
big jumps part remains unchanged.
Note that $\mathbb{E}[|Y_0Y_{s}|^p \mathbf 1_{\{|Y_0Y_{s}| > \epsilon b_n\}}|{\cal
  F}_{-1}] = G(X_0,\hat X_{0,s})$, thus we need an estimate for the bivariate
function
\begin{align*}
  G(x,y) = \sigma^p(x)\mathbb{E} [|Z_0Z_s|^p \sigma^p(c_s\eta_0 + \varsigma_s \zeta+y)
  \mathbf1_{\{|Z_0Z_s| \sigma(c_s\eta_0 + \varsigma_s\zeta+y) > \epsilon b_n\}}] \; ,
\end{align*}
where $\zeta$ is a standard Gaussian random variable, independent of
$Z_0,\eta_0$ and $Z_s$. We obtain this estimate first in the case
$\sigma(x)=\exp(x)$ and then for subadditive functions.

Let $\sigma(x)=\exp(x)$.  As in the proof of point process
convergence, we write
$$Y_0Y_{s}=Z_0Z_{s}\exp(c_s\eta_0)\exp\left(X_0+\hat X_{s}\right).$$
By Lemma~\ref{lemma:asymp-indep-EGARCH-expo},
$Z_0Z_{s}\exp(c_s\eta_0)$ is regularly varying and tail equivalent
to $Z_0Z_{s}$. Since $ \exp(p\varsigma_s\zeta)$ is independent of
$Z_0Z_s\exp(c_s\eta_0)$ and has finite moments of all orders, we
obtain that $Z_0Z_{s}\exp(c_s\eta_0)\exp(p\varsigma_s\zeta)$ is also
tail equivalent to $Z_0Z_s$, hence to $Y_0Y_1$. Thus, by Karamata's
Theorem and Potter's bounds,  we obtain, for some $\delta>0$,
\begin{align*}
  G(x,y) &= \exp(p(x+y)) \mathbb{E} [|Z_0Z_s|^p \exp(pc_s \eta_0) \exp(p\varsigma_s\zeta) \mathbf1_{\{|Z_0Z_s|
  \exp(pc_s \eta_0)
\exp(\varsigma_s\zeta)) > \epsilon b_n \exp(-y)\}}]\\
& \leq C n^{-1} b_n^p
  \exp(px) \exp((p-\alpha+\delta)(y\vee0)) \; .
\end{align*}
Since the log-normal distribution has finite moments of all order,
we obtain that $\mathbb{E}[G^2(X_0,\hat{X}_{0,s})] = O(n^{-2}b_n^{2p})$ which is
the required bound. This concludes the proof in the case $\sigma(x)
=\exp(x)$.

Let now  the assumptions of Proposition~\ref{prop:egarch-pp-quadratic} be
in force.  Using the subadditivity of $\sigma^p$, we obtain $G(x,y) \leq
\sum_{i=1}^4 I_i(x,y)$ with
\begin{align*}
  I_1(x,y) &  = \sigma^p(x)\mathbb{E}[|Z_0Z_s|^p \sigma^p(\vartheta_s) \mathbf 1_{\{|Z_0Z_s| \sigma (\vartheta_s) > \epsilon b_n\}}],\\
  I_2(x,y )& = \sigma^p(x)\mathbb{E}[|Z_0Z_s|^p\sigma^p(y) \mathbf 1_{\{|Z_0Z_s|\sigma(y) > \epsilon b_n\}}],\\
  I_3(x,y) &  = \sigma^p(x)\mathbb{E}[|Z_0Z_s|^p\sigma^p(\vartheta_s) \mathbf 1_{\{|Z_0Z_s|\sigma(y)>\epsilon b_n\}}],\\
  I_4(x,y) & = \sigma^p(x)\mathbb{E}[|Z_0Z_s|^p\sigma^p(y) \mathbf  1_{\{|Z_0Z_s|\sigma(\vartheta_s)>\epsilon b_n\}}] \; ,
\end{align*}
where for brevity we have denoted $\vartheta_s = c_s\eta_0+
\varsigma_s\zeta$. We now give bound $\mathbb{E}[I_j^2(X_0,\hat{X}_{0,s})]$,
$j=1,2,3,4$.  Since by the assumptions, $|Z_0Z_s|\sigma(\vartheta_s)$
is tail equivalent to $|Z_0Z_s|$, Karamata's Theorem yields
\begin{align*}
  \sigma^p(x) \mathbb{E} [|Z_0Z_s|^p \sigma^p(\vartheta_s) \mathbf
  1_{\{|Z_0Z_s|\sigma(\vartheta_s)>\epsilon b_n\}}] \le Cn^{-1}b_n^{p}\sigma^p(x) \; ,
\end{align*}
and since $\mathbb{E}[\sigma^{2p}(X_0)]<\infty$ by assumption, we obtain
by integrating that $\mathbb{E}[I_1^2(X_0,\hat{X}_{0,s})] = O(n^{-2} b_n^{2p})$.
For $I_2$, using again Karamata's Theorem and Potter's bound, we
obtain, for some $\delta>0$,
\begin{align*}
  \sigma^p(x) \mathbb{E} [|Z_0Z_s|^p \sigma^p(y) \mathbf1_{\{|Z_0Z_s|\sigma(y)>\epsilon
    b_n\}}] \le Cn^{-1} b_n^{p} \sigma^p(x) (\sigma(y)\vee1)^{p-\alpha+\delta} \; .
\end{align*}
Since $|Z_0|\sigma(\vartheta_s)$ is tail equivalent to $|Z_0|$ and $Z_s$ is
independent of $Z_0\sigma(\vartheta_s)$, we easily obtain a bound for the tail
of $|Z_0Z_s|(\sigma(\vartheta_s)\vee 1)$:
$$
\mathbb{P} (|Z_0Z_s|(\sigma(\vartheta_s)\vee 1)>x) \le
\mathbb{P}(|Z_0Z_s|\sigma(\vartheta_s)>x) + \mathbb{P}(|Z_0Z_s|>x) \leq C
\mathbb{P}(Z_0Z_s>x) \; ,
$$
for $x$ large. Thus, applying Karamata's Theorem and Potter's bound
to $|Z_0Z_s|$ yields, for some arbitrarily small $\delta>0$,
\begin{align*}
  I_3(x,y) \leq C \sigma^p(x) \mathbb{E}[|Z_0Z_s|^p \mathbf 1_{\{\sigma(y)|Z_0Z_s| > \epsilon
    b_n\}}] \leq C n^{-1} b_n^p \sigma^p(x) (\sigma(y) \vee 1)^{\alpha+\delta} \;
\end{align*}
and thus we conclude that $\mathbb{E}[I_3^2(X_0,\hat{X}_{0,s})] = O(n^{-2}b_n^{2p})$.
Finally, we write,
$$
I_4(x,y) \le \sigma^p(x)\sigma^p(y) \mathbb{E}[|Z_0Z_s|^p\left(\sigma^p(\vartheta_s)\vee
  1\right) \mathbf 1_{\{|Z_0Z_s|\left(\sigma(\vartheta_s)\vee 1\right)>\epsilon b_n\}}]
$$
and by the same argument as for $I_3$ we obtain that
$\mathbb{E}[I_3^2(X_0,\hat{X}_{0,s})] = O(n^{-2}b_n^{2p})$.

\appendix

\section{Gaussian long memory sequences}
\label{sec:LRD-Gaussian}

For the sake of completeness, we recall in this appendix the main definitions
and results pertaining to Hermite coefficients and expansions of square
integrable functions with respect to a possibly non standard
multivariate Gaussian distribution. Expansions with respect to the multivariate
standard Gaussian distribution are easy to obtain and describe. The theory for
non standard Gaussian vectors is more cumbersome.  The main reference
is~\cite{arcones:1994}.

\subsection{Hermite coefficients and rank}
Let $G$ be a function defined on $\mathbb{R}^k$ and $\mathbf
X=(X^{(1)},\ldots,X^{(k)})$ be a $k$-dimensional centered Gaussian vector with
covariance matrix $\Gamma$. The Hermite coefficients of $G$ with respect to $\mathbf{X}$
are defined as
\begin{align*}
  J(G,\mathbf X,\mathbf q) = \mathbb{E}\left[G(\mathbf X)\prod_{j=1}^k H_{q_j}
    (X^{(j)})\right] \; ,
\end{align*}
where $\mathbf q = (q_1,\ldots,q_k) \in\mathbb N^k$. If $\Gamma$ is the $k\times
k$ identity matrix (denoted by $I_k$), i.e. the components of $\mathbf X$ are
i.i.d.~standard Gaussian, then the corresponding Hermite coefficients are
denoted by $J^*(G,\mathbf q)$. The Hermite rank of $G$ with respect to $\mathbf
X$, is the smallest integer $\tau$ such that
\begin{align*}
  J(G,\mathbf X,\mathbf q) = 0 \ \mbox{ for all } \ \mathbf q \mbox{ such that }
  \ 0<| q_1+\cdots+q_k| < \tau \; .
\end{align*}

\subsection{Variance inequalities}
Consider now a $k$-dimensional stationary centered Gaussian process $\{\mathbf
X_i,i\ge 0\}$ with covariance function $\rho_n(i,j) = \mathbb{E}[ X_0^{(i)}
X_n^{(j)}]$ and assume either
\begin{align}
  \label{eq:weak-dependence} \forall 1 \leq i,j \leq k \;, \ \ \sum_{n=0}^\infty
  |\rho_n(i,j)| < \infty \; ,
\end{align}
or that there exists $H\in(1/2,1)$ and a function $\ell$ slowly
varying at infinity such that
\begin{align}
  \label{eq:lrd}
  \lim_{n\to+\infty} \frac{\rho_n(i,j)}{n^{2H-2}\ell(n)} = b_{i,j} \; ,
\end{align}
and the $b_{i,j}$s are not identically zero.  Denote then $\rho_n =
n^{2H-2}\ell(n)$. Then, we have the following cases.
\begin{itemize}
\item If~(\ref{eq:lrd}) holds and $2\tau(1-H)<1$, then for any function $G$ with
  Hermite rank $\tau$ with respect to $\mathbf X_0$,
\begin{align}
  \label{eq:variance-inequality-lrd}
  \mathrm{var}\left( n^{-1}\sum_{j=1}^n G(\mathbf X_j) \right) \leq C \rho_n^\tau \;
  \mathrm{var}(G(\mathbf X_0)) \; .
\end{align}
\item If~(\ref{eq:lrd}) holds and $2\tau(1-H)>1$, then for any function $G$ with
  Hermite rank $\tau$ with respect to $\mathbf X_0$,
\begin{align}
  \label{eq:variance-inequality-weak-dependence}
  \mathrm{var}\left( \sum_{j=1}^n G(\mathbf X_j) \right) \leq C n \; \mathrm{var}(G(\mathbf X_0))
  \; .
\end{align}
\item If~(\ref{eq:weak-dependence}) holds,
  then~(\ref{eq:variance-inequality-weak-dependence}) still holds.
\end{itemize}
In all these cases, the constant $C$ depends only on the Gaussian process
$\{\mathbf X_i\}$ and not on the function $G$. The
bounds~(\ref{eq:variance-inequality-lrd})
and~(\ref{eq:variance-inequality-weak-dependence}) are Equation~3.10 and~2.40 in
\cite{arcones:1994}, respectively. The
bound~(\ref{eq:variance-inequality-weak-dependence}) under
assumption~(\ref{eq:weak-dependence}) is a consequence of Equation~2.18 in
\cite[Theorem~2]{arcones:1994}.

\subsection{Limit theorems}
\label{sec:non-clt}
We now recall \cite[Theorem~6]{arcones:1994}.  Let again $\{\mathbf X_i\}$ be a
stationary sequence of $k$-dimensional Gaussian vectors with covariance matrix
$G$ and such that~(\ref{eq:lrd}) holds, and let $\tau$ be the Hermite rank of
$G$ w.r.t. $\mathbf X_0$.  If $\tau(1-H)<1/2$, there exists a process
$R_{G,\tau,H}$ such that
\begin{align}
  \label{eq:LRD-conv-multivarie}
  \frac{1}{n \rho_n^{\tau/2}} \sum_{i=1}^{[n\cdot]} \left(G( {\mathbf
      X}_i)-\mathbb{E}\left[G( {\mathbf X}_0)\right]\right) \stackrel{\scriptstyle \mathcal D}{\Rightarrow} R_{G,\tau,H}  \; .
\end{align}
In particular, if $k=1$, then
\begin{align}
  \label{eq:lim-sums}
  \frac{1}{n\rho_n^{\tau/2}} \sum_{i=1}^{[n\cdot]} \{G({X}_i) -
  \mathbb{E}[G(X_0)]\} \stackrel{\scriptstyle \mathcal D}{\Rightarrow}
  \frac{J_\tau(G)}{\tau!} R_{\tau,H} \; ,
\end{align}
where $J_\tau(G)=\mathbb{E}[G(X_1)H_{\tau}(X_1)]$ and $R_{\tau,H}$ is the
so-called Hermite or Rosenblatt process of order $\tau$, defined as a
$\tau$-fold stochastic integral
\begin{align*}
  R_{\tau,H}(t) =K_1(\tau,H)\int_{-\infty}^\infty \dots \int_{-\infty}^\infty
  \frac{\exp\left(\mathrm i t(x_1+\cdots+x_{\tau})\right)-1}{x_1+\cdots+x_{\tau}} \,
  \prod_{i=1}^{\tau} x_i^{-H+1/2} \, W(\mathrm{d}x_1) \dots W(\mathrm{d}x_{\tau}) \; ,
\end{align*}
where $W$ is an independently scattered Gaussian random measure with Lebesgue
control measure and
$$
K_1^2(\tau,H) = \frac{(\tau(H-1)+1)(2\tau(H-1)+1)}
{\tau!\left\{2\Gamma(2-2H) \sin \pi(H-\frac{1}{2})\right\}^\tau} \; .
$$
In particular, for $\tau=1$, then the limiting process is the fractional
Brownian motion, which is a Gaussian process, so
\begin{align*}
  \frac{1}{n\rho_n^{1/2}} \sum_{i=1}^{n} \{G({X}_i) - \mathbb{E}[G(X_0)]\}
  \stackrel{\scriptstyle d}{\to} \mathbf N \left(0, \frac{J(1)}{H(2H-1)} \right)  \; .
\end{align*}
On the other hand, if $1-\tau(1-H)<1/2$, then
\begin{equation}
  \label{eq:lim-sums-iid}
  \frac{1}{\sqrt{n}} \sum_{i=1}^{[n\cdot]} \{G({X}_i) - \mathbb{E}[G(X_0)]\} \stackrel{\scriptstyle \mathcal D}{\Rightarrow}  \varsigma  B \; ,
\end{equation}
where $B$ is the standard Brownian motion and $\varsigma^2 = \mathrm{var}(G(X_0)) +
2\sum_{j=1}^\infty \mathrm{cov}(G(X_0),G(X_j))$, the latter series being
absolutely summable.

{\bf Acknowledgements:} We are grateful to an anonymous referee whose remarks
lead to a substantial improvement of our article. The research of the first
author was supported by the NSERC grant. The research of the second author was
partially supported by the ANR grant ANR-08-BLAN-0314-02.

\end{document}